\newtheorem{theorem}{Theorem}[section]
\newtheorem{lemma}[theorem]{Lemma}
\newtheorem{corollary}[theorem]{Corollary}
\newtheorem{proposition}[theorem]{Proposition}
\newtheorem{xtheorem}{Theorem}
\newtheorem*{xconjecture}{Conjecture}
\theoremstyle{definition}
\newtheorem{remark}[theorem]{Remark}
\numberwithin{equation}{section}
\DeclareMathOperator{\RE}{Re}
\DeclareMathOperator{\IM}{Im}
\DeclareMathOperator{\meas}{meas}
\DeclareMathOperator{\ord}{ord}
\DeclareMathOperator{\supp}{supp}
\DeclareMathOperator{\sgn}{sgn}
\begin{document}

\title[New developments toward the Gonek Conjecture]
{New developments toward the Gonek Conjecture on the Hurwitz zeta-function}

\author[M. Mine]{Masahiro Mine}
\address{Global Education Center\\ Waseda University\\
1-6-1 NishiWaseda, Shinjuku-ku, Tokyo 169-8050, Japan}
\email{m-mine@aoni.waseda.jp}

\date{}

\begin{abstract}
In this paper, we prove a version of the universality theorem for the Hurwitz zeta-function in the case where the parameter is algebraic and irrational. 
Then we apply the result to show that many of such Hurwitz zeta-functions have infinitely many zeros in the right half of the critical strip. 
\end{abstract}

\subjclass[2020]{Primary 11M35; Secondary 41A30}

\keywords{Hurwitz zeta-function, universality theorem, value-distribution}

\maketitle

\section{Introduction and statements of results}\label{sec:1}
Let $s=\sigma+it$ be a complex variable. 
Denote by $\zeta(s,\alpha)$ the Hurwitz zeta-function with a parameter $0<\alpha \leq1$. 
It is defined by the Dirichlet series
\begin{gather}\label{eq:04010001}
\zeta(s,\alpha)
= \sum_{n=0}^{\infty} (n+\alpha)^{-s}
\end{gather}
on the half-plane $\sigma>1$ and can be continued meromorphically to the whole complex plane $\mathbb{C}$. 
Throughout this paper, let $D$ and $\mathcal{A}$ denote
\begin{gather*}
D
= \{s \in \mathbb{C} \mid 1/2< \sigma <1 \}, \\
\mathcal{A}
= \{0<\alpha<1 \mid \text{$\alpha$ is algebraic and irrational} \}, 
\end{gather*}
and $\meas \{\cdot\}$ stands for the usual Lebesgue measure of a measurable set $\{\cdot\}$ in $\mathbb{R}$. 
Gonek conjectured in his thesis \cite[p.~122]{Gonek1979} that $\zeta(s,\alpha)$ has a universality property for any $\alpha \in \mathcal{A}$ in the following sense. 

\begin{xconjecture}[Gonek]
Let $\alpha \in \mathcal{A}$. 
Let $K$ be a compact subset of the strip $D$ with connected complement. 
Let $f$ be a continuous function on $K$ which is analytic in the interior of $K$. 
Then, for every $\epsilon>0$, we have 
\begin{gather*}
\liminf_{T \to\infty} \frac{1}{T} \meas
\left\{ \tau \in [0,T] ~\middle|~ \sup_{s \in K} |\zeta(s+i \tau, \alpha)-f(s)|<\epsilon \right\}
> 0. 
\end{gather*}
\end{xconjecture}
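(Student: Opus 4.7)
The plan is to follow the classical probabilistic framework that originates with Bagchi's thesis. Equip the infinite-dimensional torus $\Omega = \prod_{n=0}^{\infty} \{z \in \mathbb{C} : |z|=1\}$ with its normalized Haar measure, and for $\omega = (\omega_{n})_{n \geq 0} \in \Omega$ introduce the random series
\[
\zeta(s,\alpha,\omega) = \sum_{n=0}^{\infty} \omega_{n} (n+\alpha)^{-s},
\]
which converges almost surely on $D$ to a random holomorphic function. Writing $H(D)$ for the space of holomorphic functions on $D$ with the topology of uniform convergence on compacta, the goal is twofold: first, to show that the distribution of $\tau \mapsto \zeta(s+i\tau,\alpha)$ under normalized Lebesgue measure on $[0,T]$ converges weakly as $T \to \infty$ to some limit law $P_{\alpha}$ on $H(D)$; second, to prove $\supp P_{\alpha} = H(D)$. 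Granted both, the conjecture follows from Mergelyan approximation of $f$ by a polynomial in $s$, together with the portmanteau theorem applied to the open neighbourhood $\{g \in H(D) : \sup_{K}|g-f|<\epsilon\}$.

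The weak convergence step proceeds through the familiar truncation--and--limit argument: approximate $\zeta(s,\alpha)$ by finite Dirichlet sections $\zeta_{N}$, establish equidistribution of $\tau \mapsto ((n+\alpha)^{-i\tau})_{0 \leq n \leq N}$ on the closure of its $\mathbb{R}$-orbit inside the $(N+1)$-dimensional torus, and then pass to $N = \infty$ using a uniform second-moment bound for the tail $\zeta(s,\alpha) - \zeta_{N}(s,\alpha)$ on vertical lines, together with tightness in $H(D)$. In the transcendental case the numbers $\log(n+\alpha)$ are $\mathbb{Q}$-linearly independent by Lindemann--Weierstrass, so the orbit fills the whole torus. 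For $\alpha \in \mathcal{A}$ no such independence is known, and one must instead work with the actual orbit closure $\overline{\Omega}_{\alpha} \subset \Omega$, a closed subgroup whose Haar measure gives the pushforward $P_{\alpha}$. Establishing convergence to this more abstract limit remains within reach of standard techniques.

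The genuine obstacle is proving $\supp P_{\alpha} = H(D)$. In Bagchi's transcendental argument one reduces via Hahn--Banach to showing that every finite complex Borel measure $\mu$ on a compact $K \subset D$ satisfying
\[
\sum_{n=0}^{\infty} \Bigl| \int_{K} (n+\alpha)^{-s}\, d\mu(s) \Bigr| < \infty
\]
must annihilate the algebra of germs of holomorphic functions near $K$; one then invokes growth estimates on the associated entire function and a Pechersky-type rearrangement theorem. For $\alpha \in \mathcal{A}$, $\supp P_{\alpha}$ is a priori only a closed subspace, and the difficulty is that hidden multiplicative relations among $\{n+\alpha\}_{n \geq 0}$ could collapse it onto a proper subset. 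The natural attack is to view $\alpha$ as a generator of the number field $K = \mathbb{Q}(\alpha)$, factor $n+\alpha$ in $\mathcal{O}_{K}$, and reorganize the Dirichlet series along prime ideals so that the denseness question becomes a joint universality statement for Hecke $L$-functions over $K$, where explicit Euler products are available.

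The hard part will be ruling out nontrivial $\mathbb{Z}$-linear relations
\[
\sum_{n \in S} e_{n} \log(n+\alpha) = 0, \quad S \subset \mathbb{Z}_{\geq 0} \text{ finite}, \ (e_{n}) \in \mathbb{Z}^{S} \setminus \{0\},
\]
for algebraic irrational $\alpha$. This is a deep Diophantine question that, in its qualitative form, seems out of reach of Baker's theorem on linear forms in logarithms as currently formulated. A realistic partial strategy is to fix $\alpha$ of small degree so that $\mathcal{O}_{K}$ is explicit, work with a long but finite initial segment $0 \leq n \leq N$, and argue that whatever residual algebraic relations persist are absorbed into an explicit subgroup whose Haar image under $\omega \mapsto \zeta(s,\alpha,\omega)$ is still dense in $H(D)$. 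This should yield the conjecture either for particular classes of $\alpha$ or for restricted families of target functions $f$, and plausibly corresponds to the "new developments" promised in the title.
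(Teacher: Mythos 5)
The statement you are attempting is the Gonek Conjecture itself, which this paper does not prove and which remains open; so the question is whether your proposal actually closes it, and it does not. Your first two steps (the limit theorem in $H(D)$ and the reduction via Mergelyan and the portmanteau theorem) are sound and essentially coincide with what the paper does: Theorem \ref{thm:3.1} establishes weak convergence of $P_{\alpha,T}$ to the law of a random series $\zeta(s,\mathbb{X}_\alpha)$ for \emph{every} algebraic $\alpha$, with no linear-independence input, precisely because the random variables $\mathbb{X}_\alpha(n)$ are built from the prime-ideal factorization of $n+\alpha$ in $\mathbb{Q}(\alpha)$ so that their joint moments match the orbit-closure averages (Lemma \ref{lem:2.1}, Proposition \ref{prop:3.3}). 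The genuine gap is exactly where you admit it: you never establish that the support of the limit law is all of $H(D)$, and your proposed routes (ruling out $\mathbb{Z}$-linear relations among $\log(n+\alpha)$, or reorganizing along Hecke $L$-functions) are left as aspirations. Since the whole content of the conjecture beyond the limit theorem is this support/denseness statement, your argument proves nothing beyond what was already known; ``remains within reach of standard techniques'' and ``should yield the conjecture'' are not steps.

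It is worth contrasting with what the paper actually does, because it shows how far one can get without resolving the Diophantine obstruction you point to. Rather than trying to prove $\supp Q_\alpha = H(D)$ for a fixed $\alpha$, the paper introduces a second, fully independent random sequence $\mathbb{Y}_\alpha(n)$, proves a covering theorem in the Bergman space $A^2(U)$ (Theorem \ref{thm:4.1}, Corollary \ref{cor:4.9}) showing that any target function is approximated by the polynomial $\zeta_N(s,\mathbb{Y}_\alpha)$ uniformly for $\alpha$ in a small interval, and then controls the error of replacing $\mathbb{Y}_\alpha$ by $\mathbb{X}_\alpha$ through a conditional square-mean estimate (Proposition \ref{prop:5.2}); the finitely many $\alpha$ for which a multiplicative relation $\prod_n (n+\alpha)^{m_n}=(n_2+\alpha)/(n_1+\alpha)$ with bounded exponents could spoil the orthogonality are simply excluded. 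This yields universality for all but finitely many $\alpha$ in $\mathcal{A}_\rho(c)$ (Theorem \ref{thm:1.4}) --- evidence for the conjecture, not the conjecture. If you want to salvage your proposal, you would need either to prove the absence of the relevant multiplicative relations for your fixed $\alpha$ (which, as you note, is currently out of reach), or to reformulate your goal as a statement quantified over $\alpha$ in the style of the paper.
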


The history of universality is briefly summarized in Section \ref{sec:1.1}. 
Here, we recall that the universality theorem of $\zeta(s,\alpha)$ has already been proved in the case where $\alpha$ is transcendental or rational. 
Furthermore, in the case $\alpha \in \mathcal{A}$, some progress was made by Sourmelidis and Steuding \cite{SourmelidisSteuding2022}. 
They succeeded to show an effective but weak form of universality of $\zeta(s,\alpha)$ in the same manner as in \cite{GarunkstisLaurincikasMatsumotoSteudingSteuding2010}. 
See Theorem \ref{thm:1.3} for the details. 
The main result of this paper also presents a version of universality of $\zeta(s,\alpha)$ for any $\alpha \in \mathcal{A}$, which is quite different from \cite{SourmelidisSteuding2022}. 
It provides another piece of evidence for Gonek's conjecture. 

\begin{xtheorem}\label{thm:1}
Let $\alpha \in \mathcal{A}$. 
Let $K$ be a compact subset of the strip $D$ with connected complement. 
Let $f$ be a continuous function on $K$ which is analytic in the interior of $K$. 
Then there exists a sequence $\{\alpha_k\}$ of elements in $\mathcal{A}$ depending on $\alpha,f,K$ with the following property: 
for every $\epsilon>0$, there exists a number $k_0(\epsilon)$ such that
\begin{gather*}
\liminf_{T \to\infty} \frac{1}{T} \meas
\left\{ \tau \in [0,T] ~\middle|~ \sup_{s \in K} |\zeta(s+i \tau, \alpha_k)-f(s)|<\epsilon \right\}
> 0
\end{gather*}
and $|\alpha_k-\alpha|<\epsilon$ for all $k \geq k_0(\epsilon)$. 
\end{xtheorem}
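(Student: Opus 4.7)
The plan is to adapt the Bagchi--Voronin universality scheme to the algebraic irrational setting by replacing the full $\mathbb{Q}$-linear independence of $\{\log(n+\alpha)\}_{n\geq 0}$, unavailable when $\alpha \in \mathcal{A}$, with independence of only a finite initial segment at suitable perturbations $\alpha_k \in \mathcal{A}$ of $\alpha$. Since $n+\alpha_k$ is positive real, $\mathbb{Q}$-linear independence of $\{\log(n+\alpha_k)\}_{n<N}$ is equivalent to multiplicative independence of the shifts $\{n+\alpha_k\}_{n<N}$ over $\mathbb{Z}$, which is a generic condition within $\mathcal{A}$ at any fixed level $N$.

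Given $\epsilon$, $K$, and $f$, the first step is to apply the classical Pechersky--Bagchi denseness lemma to the sequence $\{(n+\alpha)^{-s}\}_{n\geq 0}$ in $H(D)$, producing $N = N(\epsilon, K, f)$ and unit-modulus coefficients $c_0, \ldots, c_{N-1}$ such that $\sum_{n<N} c_n(n+\alpha)^{-s}$ approximates $f$ on $K$ within $\epsilon/4$; by continuity of $(n+\alpha')^{-s}$ in $\alpha'$ on $K$, the same coefficients continue to give an approximation within $\epsilon/2$ for any $\alpha'$ in a sufficiently small neighborhood of $\alpha$. The second step is to select $\alpha_k \in \mathcal{A}$ with $|\alpha_k-\alpha| < 1/k$ and a sequence $N_k \to \infty$ such that $\{n+\alpha_k\}_{n<N_k}$ is multiplicatively independent over $\mathbb{Z}$. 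This is possible because each non-trivial relation $\prod(n+\alpha)^{a_n}=1$ defines a finite set of $\alpha \in (0,1)$, so the union of all forbidden sets at a fixed level is a countable subset of $(0,1)$ that does not contain every algebraic irrational in a neighborhood of $\alpha$.

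For such $\alpha_k$, Weyl's equidistribution theorem applied to the flow $\tau \mapsto \bigl(-\tau\log(n+\alpha_k)/(2\pi) \bmod 1\bigr)_{n<N_k}$ on $\mathbb{T}^{N_k}$ yields a positive density of $\tau \in [0,T]$ satisfying $\bigl|e^{-i\tau\log(n+\alpha_k)} - c_n\bigr| < \delta$ for all $n<N$, whence the truncation $\sum_{n<N}(n+\alpha_k)^{-(s+i\tau)}$ approximates $f(s)$ on $K$ to within $3\epsilon/4$. A standard second-moment estimate, uniform in $\alpha'$ near $\alpha$, then shows that the tail $\zeta(s+i\tau,\alpha_k) - \sum_{n<N}(n+\alpha_k)^{-(s+i\tau)}$ is bounded on $K$ by $\epsilon/4$ on a set of $\tau$ whose density tends to $1$ as $N\to\infty$. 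Intersecting these two positive-density sets completes the proof for all $k \geq k_0(\epsilon)$.

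The principal obstacle lies in the second step: producing $\alpha_k \in \mathcal{A}$ arbitrarily close to $\alpha$ with multiplicative independence of arbitrarily long initial segments $\{n+\alpha_k\}_{n<N_k}$. Since $\mathcal{A}$ is itself countable, classical Baire-category reasoning does not apply directly; instead one must argue quantitatively—exploiting that each relation $\prod(n+\alpha)^{a_n}=1$ determines a polynomial of controllable degree in $\alpha$, so that algebraic irrationals in $\mathcal{A} \cap (\alpha - 1/k, \alpha + 1/k)$ of sufficiently large height avoid any prescribed collection of such relations. A secondary difficulty is the uniformity in $\alpha_k$ of the tail and equidistribution estimates as $\alpha_k \to \alpha$, which explains why the conclusion is stated asymptotically in $k$ rather than at $\alpha$ itself.
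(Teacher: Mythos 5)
Your overall architecture (approximate $f$ by a finite rotated Dirichlet polynomial, realize the rotations by Kronecker--Weyl at a perturbed parameter $\alpha_k$, control the tail in mean square, intersect) contains a genuine quantitative gap at the final step. The set of $\tau$ produced by equidistribution, where $\bigl|(n+\alpha_k)^{-i\tau}-c_n\bigr|<\delta$ for all $n<N$, has density of order $\delta^{N}$, i.e.\ exponentially small in $N$; whereas the set of $\tau$ where the tail $\zeta(s+i\tau,\alpha_k)-\sum_{n<N}(n+\alpha_k)^{-(s+i\tau)}$ exceeds $\epsilon/4$ on $K$ has density bounded only by $O\bigl(\epsilon^{-2}N^{1-2\sigma_0}\bigr)$ via Chebyshev, which is merely polynomially small in $N$ and does not shrink as $T\to\infty$ for fixed $N$. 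Since $\delta^{N}\ll N^{1-2\sigma_0}$ for the large $N$ forced on you by the denseness step, the two sets may fail to intersect, so ``intersecting these two positive-density sets'' does not complete the proof. What is needed is a \emph{conditional} statement: the tail must be small on a proportion $1-o(1)$ of the good-rotation set itself. This is exactly what the paper establishes (Proposition \ref{prop:5.2}): conditioning on the event $\Omega_0$ that the first $N+1$ coordinates are near prescribed values, the conditional second moment of the tail is $\ll \mathbf{P}(\Omega_0)N^{1-2\sigma_0}+L^{1-2\sigma_0}+NL(\log\Delta)^{N+1}/\Delta$, the crucial point being the factor $\mathbf{P}(\Omega_0)$ in front, obtained from approximate independence of the tail coordinates from the conditioned ones; one also needs the two-scale cut at an auxiliary level $L\gg N$, which your single tail starting at $N$ does not provide.

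The second gap is your selection of $\alpha_k$ with full multiplicative independence of $\{n+\alpha_k\}_{n<N_k}$. Your countability argument is inconclusive (a countable bad set can a priori contain every algebraic irrational near $\alpha$), as you partly concede, and the height-based fix you sketch only rules out relations with \emph{bounded} exponents, whereas genuine linear independence of $\log(n+\alpha_k)$, which your appeal to Weyl/Kronecker requires, must exclude relations with arbitrary integer exponents. The paper sidesteps this entirely: by smoothing the indicator of $\Omega_0$ with Beurling--Selberg majorants, only characters of the torus with frequencies $|m_n|\le\Delta$ occur, so for each fixed $N,L,\Delta$ one only needs $\prod_{n\le N}(n+\alpha)^{m_n}\neq 1$ and $\neq(n_2+\alpha)/(n_1+\alpha)$ for the finitely many bounded exponent vectors, which fails for only finitely many algebraic $\alpha$ (the exceptional set $\mathcal{E}$ of Theorem \ref{thm:1.4}); Theorem \ref{thm:1} then follows by choosing $\alpha_k\in\mathcal{A}_{\rho}(\alpha)\setminus\mathcal{E}$ with $|\alpha_k-\alpha|<1/k$. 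The limit theorem (Theorem \ref{thm:3.1}) is also needed to transfer the positive probability in the random model back to positive lower density of $\tau$-shifts, a step your direct $\tau$-space argument replaces by the flawed intersection. So while your first step (denseness plus continuity in the parameter, as in Propositions \ref{prop:4.7} and \ref{prop:4.8}) matches the paper in substance, the core of the proof --- replacing independence by bounded-relation avoidance and proving the conditional tail estimate --- is missing.
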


Notice that the sequence $\{\alpha_k\}$ of Theorem \ref{thm:1} converges to $\alpha$ as $k \to\infty$. 
If one can take $\alpha_k=\alpha$ for any $k$, then Gonek's conjecture is true. 

Theorem \ref{thm:1} yields a new result on the distribution of zeros of $\zeta(s,\alpha)$. 
The study of zeros of the Hurwitz zeta-function has a long history since the classical work of Davenport and Heilbronn \cite{DavenportHeilbronn1936a}. 
They proved that $\zeta(s,\alpha)$ has infinitely many zeros in the half-plane $\sigma>1$ if the parameter $\alpha$ is transcendental or rational with $\alpha \neq 1,1/2$. 
Then, Cassels \cite{Cassels1961} extended the result to the case $\alpha \in \mathcal{A}$. 
(Actually, his original proof contains an error, and it is corrected in \cite{Mine2024+}.) 
Studying the zeros in the strip $D$ is more difficult since \eqref{eq:04010001} is not available. 
To this day, we know that $\zeta(s,\alpha)$ has infinitely many zeros in $D$ only if $\alpha$ is transcendental or rational with $\alpha \neq 1,1/2$. 
Garunk\v{s}tis \cite{Garunkstis2005} showed that there exist infinitely many $\alpha \in \mathcal{A}$ such that the Hurwitz zeta-function $\zeta(s,\alpha)$ has an arbitrary finite number of zeros in $D$. 
In this paper, we refine the result so that $\zeta(s,\alpha)$ has infinitely many zeros in $D$. 

\begin{xtheorem}\label{thm:2}
There exist infinitely many $\alpha \in \mathcal{A}$ such that the Hurwitz zeta-function $\zeta(s,\alpha)$ has infinitely many zeros in the strip $D$. 
\end{xtheorem}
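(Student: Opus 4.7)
The plan is the classical Rouch\'e-style deduction from the approximation result of Theorem~\ref{thm:1}, followed by a density argument that upgrades one qualifying parameter to infinitely many. Fix any $\alpha^\circ \in \mathcal{A}$, choose a closed disk $K = \{s \in \mathbb{C} : |s-s_0| \leq r\} \subset D$ with small radius $r > 0$, and take $f(s) = s - s_0$. Then $f$ is continuous on $K$, analytic in its interior, $|f(s)| = r$ on $\partial K$, and $f$ has a unique simple zero at $s_0$. Applying Theorem~\ref{thm:1} to $(\alpha^\circ, f, K)$ produces the sequence $\{\alpha_k\} \subset \mathcal{A}$. Pick $\epsilon \in (0,r)$ and fix any $k \geq k_0(\epsilon)$; then by Theorem~\ref{thm:1} the set
\[
S_T := \left\{ \tau \in [0,T] : \sup_{s \in K} |\zeta(s+i\tau, \alpha_k) - (s-s_0)| < \epsilon \right\}
\]
satisfies $\liminf_{T\to\infty} \meas(S_T)/T > 0$.

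For each $\tau \in S_T$, the inequality $\epsilon < r = \min_{\partial K} |f|$ combined with Rouch\'e's theorem applied to $\zeta(\,\cdot\,+i\tau, \alpha_k)$ and $f$ on $K$ yields exactly one zero of $\zeta(s+i\tau, \alpha_k)$ in the interior of $K$, equivalently a zero of $\zeta(s, \alpha_k)$ inside $K + i\tau \subset D$. Because $S_T$ has positive lower density, a simple covering estimate (any interval of length $4r$ can hold at most measure $4r$ worth of $S_T$) produces an infinite sequence $\tau_1 < \tau_2 < \cdots$ in $\bigcup_{T>0} S_T$ with $\tau_{n+1} - \tau_n > 2r$. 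The translated disks $K + i\tau_n$ are then pairwise disjoint, so the resulting zeros of $\zeta(s, \alpha_k)$ are all distinct. Hence $\zeta(s, \alpha_k)$ has infinitely many zeros in $D$ for every $k \geq k_0(\epsilon)$.

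Let $\mathcal{A}^\star \subset \mathcal{A}$ denote the set of parameters for which $\zeta(s,\alpha)$ has infinitely many zeros in $D$. The preceding step shows that every $\alpha^\circ \in \mathcal{A}$ is a limit of elements of $\mathcal{A}^\star$: for any $\delta > 0$, taking $\epsilon < \delta$ in Theorem~\ref{thm:1} forces $\alpha_k \in \mathcal{A}^\star$ with $|\alpha_k - \alpha^\circ| < \delta$ for all large $k$. If $\mathcal{A}^\star$ were finite, one could pick $\alpha^\circ \in \mathcal{A}$ with $\mathrm{dist}(\alpha^\circ, \mathcal{A}^\star) > 0$ (possible because $\mathcal{A}$ is dense in $(0,1)$) and derive a contradiction by taking $\delta < \mathrm{dist}(\alpha^\circ, \mathcal{A}^\star)$. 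Therefore $\mathcal{A}^\star$ is infinite, which is the assertion of Theorem~\ref{thm:2}. All substantive content is packaged inside Theorem~\ref{thm:1}; the reduction to zeros is the routine Voronin-style Rouch\'e argument, and the translation $K \mapsto K + i\tau$ remains inside $D$ because $D$ is invariant under vertical shifts. The main obstacle is therefore the proof of Theorem~\ref{thm:1}, not this deduction.
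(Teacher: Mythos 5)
Your argument is correct, but it is organized differently from the paper. The paper obtains Theorem~\ref{thm:2} as a two-line corollary of Theorem~\ref{thm:1.6}: for fixed $1/2<\sigma_1<\sigma_2<1$ and any $0<c<1$ one has $N_\alpha(\sigma_1,\sigma_2,T)\gg T$ for every $\alpha$ in $\mathcal{A}_\rho(c)\setminus\mathcal{E}$ with $\mathcal{E}$ finite, and since $\mathcal{A}_\rho(c)$ is infinite this immediately exhibits infinitely many parameters with infinitely many zeros (and with the stronger quantitative bound $\gg T$). The Rouch\'e-plus-spacing argument you write out with $f(s)=s-s_0$ is essentially the paper's proof of Theorem~\ref{thm:1.6} in Section~\ref{sec:6}, except that you feed it the packaged statement of Theorem~\ref{thm:1} rather than Theorem~\ref{thm:1.4}; because Theorem~\ref{thm:1} only produces a sequence $\{\alpha_k\}$ converging to a given $\alpha^\circ$, you must add the extra accumulation/finiteness contradiction to upgrade to infinitude of $\mathcal{A}^\star$, a step the paper never needs since its exceptional set is finite inside an infinite family. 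What your route buys is that it uses only the weakest stated form of the universality result; what it loses is the lower bound $N_\alpha(\sigma_1,\sigma_2,T)\gg T$ and the explicit description of the admissible parameters as $\mathcal{A}_\rho(c)\setminus\mathcal{E}$. Two cosmetic points: in the final paragraph you should take $\epsilon<\min\{\delta,r\}$ so that both the approximation threshold needed for Rouch\'e and the closeness $|\alpha_k-\alpha^\circ|<\delta$ hold simultaneously (your earlier fixed $\epsilon\in(0,r)$ does not automatically satisfy $\epsilon<\delta$); and the covering estimate is not really needed to extract the spaced sequence $\{\tau_n\}$ --- positive lower density already forces the set of admissible $\tau$ to be unbounded, which suffices for the greedy choice with gaps exceeding $2r$, though the covering bound is exactly what the paper uses to make the count quantitative.
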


In fact, Theorems \ref{thm:1} and \ref{thm:2} are proved in a stronger form. 
For example, we obtain a lower bound for the number of the zeros of $\zeta(s,\alpha)$ in the rectangle $\sigma_1 \leq \sigma \leq \sigma_2$, $0 \leq t \leq T$ for any $1/2<\sigma_1<\sigma_2<1$. 
We postpone the exact statements, which appear in Section \ref{sec:1.2}.

\subsection{Universality theorems}\label{sec:1.1}
The notion of universality in analysis first appeared in the classical result of Fekete reported in \cite{Pal1915}. 
He proved that there exists a real formal power series $\sum_{n=1}^{\infty} a_n x^n$ with the following property: for every continuous function $f$ on $[-1,1]$ satisfying $f(0)=0$, there exists an increasing sequence $\{m_k\}$ of positive integers such that
\begin{gather*}
\sup_{x \in [-1,1]} \left|\sum_{n=1}^{m_k} a_n x^n - f(x)\right|
\to 0
\end{gather*}
as $k \to\infty$. 
Some similar approximation results were also obtained in \cite{Birkhoff1929, Marcinkiewicz1935}. 
Then such a kind of property was named universality by Marcinkiewicz \cite{Marcinkiewicz1935}. 
While these results in \cite{Pal1915, Birkhoff1929, Marcinkiewicz1935} do not provide explicit examples of an object with universality, Voronin \cite{Voronin1975b} achieved the universality theorem for the Riemann zeta-function $\zeta(s)$ as follows. 

\begin{theorem}[Voronin]\label{thm:1.1}
Let $0<r<1/4$. 
Let $f$ be a non-vanishing continuous function on the disc $|s| \leq r$ which is analytic in the interior. 
Then, for every $\epsilon>0$, there exists a positive real number $\tau=\tau(\epsilon)$ such that 
\begin{gather*}
\sup_{|s| \leq r} \left|\zeta \bigg(s+\frac{3}{4}+i \tau \bigg)-f(s) \right|
< \epsilon. 
\end{gather*}
\end{theorem}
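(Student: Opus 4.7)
The plan is to reduce the multiplicative uniform approximation of $f$ by $\zeta(s+3/4+i\tau)$ to an additive approximation problem on the logarithmic side, and then to realize the target by vertical shifts of a truncated Euler product. Since $f$ is non-vanishing and continuous on $|s|\leq r$ with analytic interior, fix a continuous branch $g(s) = \log f(s)$. It suffices to produce $\tau>0$ with $\sup_{|s|\leq r}|\log\zeta(s+3/4+i\tau) - g(s)| < \epsilon'$ for a sufficiently small $\epsilon'=\epsilon'(\epsilon)$, since exponentiation then gives the multiplicative conclusion.

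The argument splits into two independent approximations. First, for a parameter vector $\underline{\theta}=(\theta_p)_{p\leq N}$ indexed by primes $p\leq N$, consider the truncated logarithmic Euler product
$$L_N(s,\underline{\theta}) = -\sum_{p\leq N}\log\bigl(1 - e^{2\pi i\theta_p}p^{-s-3/4}\bigr).$$
A Hilbert space rearrangement argument (Pechersky's theorem, applied in a Bergman space on a disc slightly larger than $|s|\leq r$), together with the square-summability of $\{p^{-s-3/4}\}$ on the closed disc, shows that the set of $L_N(\cdot,\underline{\theta})$, as $N$ and $\underline{\theta}$ vary, is dense in the space of analytic functions on that disc. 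Hence one can select $N_0$ and $\underline{\theta}_0$ with $\sup_{|s|\leq r}|L_{N_0}(s,\underline{\theta}_0) - g(s)|$ arbitrarily small.

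Second, by Kronecker's equidistribution theorem and the $\mathbb{Q}$-linear independence of $\{\log p\}$, the set of $\tau\in[0,T]$ for which $\tau\log p/(2\pi)$ lies within $\delta$ of $\theta_p$ modulo $1$ simultaneously for every $p\leq N_0$ has positive lower density, for any fixed $\delta>0$. For such $\tau$, the shifted truncated Euler product $\prod_{p\leq N_0}(1-p^{-s-3/4-i\tau})^{-1}$ uniformly approximates $\exp L_{N_0}(s,\underline{\theta}_0)$, and therefore approximates $f(s)$, on $|s|\leq r$.

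The main obstacle is bridging the two approximations: the Euler product diverges for $\sigma<1$, so $\zeta(s+3/4+i\tau)$ cannot be replaced pointwise by its truncated product. The standard remedy is an $L^2$-mean-value estimate using a smoothly weighted truncation $\zeta_N^{\ast}(s)$ (built from a decaying convolution factor so that the resulting Dirichlet series converges absolutely), for which one establishes
$$\lim_{N\to\infty}\limsup_{T\to\infty}\frac{1}{T}\int_0^T\sup_{|s|\leq r}\bigl|\zeta(s+\tfrac{3}{4}+i\tau) - \zeta_N^{\ast}(s+\tfrac{3}{4}+i\tau)\bigr|^2\,d\tau = 0$$
via Cauchy's integral formula, contour shifting, and the standard second-moment bound $\int_0^T|\zeta(\sigma+it)|^2\,dt \ll T$ for $1/2<\sigma<1$. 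Intersecting the resulting small-exceptional set with the positive-density set from the Kronecker step yields a single $\tau$ satisfying the required uniform bound, completing the proof.
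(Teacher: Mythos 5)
The paper does not actually prove Theorem \ref{thm:1.1}: it is quoted from Voronin \cite{Voronin1975b} with pointers to Bagchi, Gonek and Good for proofs, so your sketch has to be measured against the classical argument. Your outline does follow Voronin's route (logarithmic reduction, Pechersky-type rearrangement denseness, Kronecker--Weyl, mean-square bridging), but as written it has a genuine gap at the bridging step. Your $L^2$ estimate compares $\zeta(s+\tfrac34+i\tau)$ with a smoothed Dirichlet-series truncation $\zeta_N^{\ast}$, while the denseness and Kronecker steps only control the finite Euler product $\prod_{p\le N_0}\bigl(1-p^{-s-3/4-i\tau}\bigr)^{-1}$ through the finitely many angles $\tau\log p$, $p\le N_0$. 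Intersecting the two sets of $\tau$ therefore yields $\zeta\approx\zeta_N^{\ast}$ and, separately, $\prod_{p\le N_0}(\cdots)^{-1}\approx f$; nothing in your argument links $\zeta_N^{\ast}(s+\tfrac34+i\tau)$ to that Euler product at the same $\tau$. The weighted truncation involves all integers $n$, not only $N_0$-smooth ones, and its value at a given $\tau$ is not determined by the controlled angles. Closing this is precisely the hard part of the classical proof: one either estimates the mean square of the difference between $\zeta$ and a truncated Euler product with the $\tau$-average restricted to the Kronecker set, using equidistribution on the full $\pi(z)$-dimensional torus so that the uncontrolled primes average out there, or one recasts the whole argument \`a la Bagchi as a limit theorem in $H(D)$ plus a support computation for the random Euler product --- which is the structure the present paper itself adapts to the Hurwitz setting (Theorem \ref{thm:3.1}, Section \ref{sec:4}, Section \ref{sec:5}).

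A second, smaller issue: the denseness step needs more than square-summability of $\{p^{-s-3/4}\}$. Pechersky's rearrangement theorem requires, in addition to $\sum_p\|u_p\|^2<\infty$, a divergence condition on $\sum_p|\langle u_p,e\rangle|$ for every nonzero $e$ in the Hilbert space, and verifying that condition is the analytic core of the lemma: it uses the prime number theorem together with estimates for entire functions of exponential type, exactly the kind of machinery this paper deploys in Lemmas \ref{lem:4.5} and \ref{lem:4.6} and their use in Proposition \ref{prop:4.7} for the Hurwitz analogue. As stated, ``square-summability implies denseness'' is not true, so you should either invoke the full denseness lemma as a black box or supply this verification.
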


There are several methods for the proof of the universality of $\zeta(s)$. 
See Bagchi \cite{Bagchi1981}, Gonek \cite{Gonek1979}, and Good \cite{Good1980}. 
The Linnik--Ibragimov conjecture vaguely asserts that any reasonable Dirichlet series would have a universality property. 
Then the universality of $\zeta(s)$ has been extended to many zeta and $L$-functions: Dirichlet $L$-functions \cite{Voronin1975b}; Dedekind zeta-functions \cite{Reich1980}; $L$-functions attached to certain cusp forms \cite{LaurincikasMatsumoto2001}, and so on. 
For more information, see Matsumoto's survey \cite{Matsumoto2015}. 
The universality of the Hurwitz zeta-function was proved by Bagchi \cite{Bagchi1981} and Gonek \cite{Gonek1979} independently, where the parameter is transcendental or rational. 

\begin{theorem}[Bagchi, Gonek]\label{thm:1.2}
Let $0<\alpha \leq1$ be a transcendental or rational number with $\alpha \neq 1,1/2$. 
Let $K$ be a compact subset of the strip $D$ with connected complement. 
Let $f$ be a continuous function on $K$ which is analytic in the interior of $K$. 
Then, for every $\epsilon>0$, we have
\begin{gather*}
\liminf_{T \to\infty} \frac{1}{T} \meas
\left\{ \tau \in [0,T] ~\middle|~ \sup_{s \in K} |\zeta(s+i \tau, \alpha)-f(s)|<\epsilon \right\}
> 0. 
\end{gather*}
\end{theorem}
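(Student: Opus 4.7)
The plan is to follow the probabilistic approach of Bagchi. Write $H(D)$ for the space of holomorphic functions on $D$ equipped with the topology of uniform convergence on compact subsets, and consider the family of probability measures
\[
\mu_{T,\alpha}(A) = \frac{1}{T} \meas\{\tau \in [0,T] : \zeta(\cdot + i\tau, \alpha) \in A\}
\]
on $H(D)$. The first step is to show that $\mu_{T,\alpha}$ converges weakly as $T \to \infty$ to a Borel probability measure $\mu_\alpha$ on $H(D)$. I would do this by truncating the Dirichlet series with a smooth cut-off, realising the shifts as an ergodic flow on the infinite torus indexed by $(n+\alpha)_{n \geq 0}$, and controlling the tail through a mean-square estimate on vertical lines. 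Once $\mu_\alpha$ is constructed, the theorem reduces by the portmanteau property to showing that the open set $\{g \in H(D) : \sup_K |g - f| < \epsilon\}$ meets $\supp \mu_\alpha$, so the crux is the identification $\supp \mu_\alpha = H(D)$.

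To identify the support, I would split into the two cases according to the nature of $\alpha$. If $\alpha$ is transcendental, the decisive input is that $\{\log(n + \alpha) : n \geq 0\}$ is linearly independent over $\mathbb{Q}$: any nontrivial rational relation $\sum c_n \log(n+\alpha) = 0$ exponentiates to a polynomial identity $\prod_n (n+\alpha)^{c_n} = 1$, which would force $\alpha$ to be algebraic. This independence feeds Kronecker's equidistribution theorem on the infinite torus, and a rearrangement argument of Pechersky type in a suitable Hilbert space of analytic functions then produces any prescribed analytic target. If $\alpha = a/q$ is rational with $\alpha \neq 1, 1/2$, I would rewrite
\[
\zeta(s, a/q) = q^{s} \sum_{\substack{m \geq a \\ m \equiv a \,(\mathrm{mod}\, q)}} m^{-s}
\]
and expand the inner sum in Dirichlet characters modulo $q$, reducing the problem to the joint universality of Dirichlet $L$-functions. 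The exclusion $\alpha \neq 1, 1/2$ prevents the expansion from collapsing to a single scalar multiple of $\zeta(s)$, in which case the non-vanishing constraint of Theorem \ref{thm:1.1} would reappear.

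The main obstacle, and the substance of the whole argument, is the support identification in the transcendental case: translating the Diophantine independence of $\alpha$ into a density statement inside the infinite-dimensional space $H(D)$. It is at this point that the distinction between Hurwitz and Riemann zeta becomes essential — the absence of an Euler product removes the non-vanishing restriction on $f$, because the underlying random model is no longer multiplicative and one has genuine freedom to rearrange. Once the support is pinned down, Mergelyan's theorem reduces the approximation of a general $f$ to the polynomial case, and the conclusion follows from weak convergence applied to the open approximation set.
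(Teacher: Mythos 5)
Your outline is essentially the classical Bagchi--Gonek argument that the paper only cites for this theorem (the paper gives no proof of Theorem \ref{thm:1.2}; Section \ref{sec:1.1} records exactly the two-case sketch you reproduce): a limit theorem in $H(D)$, support identification via the $\mathbb{Q}$-linear independence of the $\log(n+\alpha)$ plus a Pechersky-type rearrangement in the transcendental case, and the character decomposition in the rational case. The transcendental half of your sketch is fine. In the rational case, however, the step ``reducing the problem to the joint universality of Dirichlet $L$-functions'' does not quite go through as stated: after shifting, one has $\zeta(s+i\tau,a/q)=q^{i\tau}\cdot\frac{q^{s}}{\phi(q)}\sum_{\chi \bmod q}\overline{\chi}(a)L(s+i\tau,\chi)$, and the unimodular factor $q^{i\tau}$ varies with $\tau$, so plain joint universality gives targets for the $L$-functions but no control of this rotating scalar. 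What is needed is the \emph{hybrid} joint universality (which is what the paper invokes): simultaneous approximation of the $L(s+i\tau,\chi)$ by prescribed non-vanishing targets together with a Kronecker-type condition that $q^{i\tau}$ lie in a prescribed small arc of $S^1$, all on a set of $\tau$ of positive lower density; in the probabilistic language, the joint limit theorem must be taken on $H(D)^{\phi(q)}\times S^1$ and the support identified there. With that correction, and the standard device of writing the target as $f=(f+C)+(-C)$ with $C$ large so that the individual targets assigned to the $L$-functions are non-vanishing (this is where $\alpha\neq 1,1/2$, i.e.\ $\phi(q)\geq 2$, is used, as you correctly observe), your argument is the standard proof.
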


If $\alpha$ is transcendental, then the real numbers $\log(n_1+\alpha), \ldots, \log(n_k+\alpha)$ are linearly independent over $\mathbb{Q}$ for any distinct integers $n_1,\ldots,n_k \geq0$. 
Therefore the Kronecker--Weyl theorem is available to show that $((n_1+\alpha)^{i \tau}, \ldots, (n_k+\alpha)^{i \tau})$ is uniformly distributed for $\tau \in \mathbb{R}$. 
This is a key step in the proof of Theorem \ref{thm:1.2} in the transcendental case. 
If $\alpha$ is rational, then we have the formula
\begin{gather}\label{eq:04010002}
\zeta(s,\alpha)
= \frac{q^s}{\phi(q)} \sum_{\chi \bmod q} \overline{\chi}(a) L(s,\chi)
\end{gather}
for $\alpha=a/q$ with $a,q \in \mathbb{Z}_{>0}$, where $\phi(q)$ denotes Euler's totient function, and $\chi$ runs through the Dirichlet characters of modulo $q$. 
Then the hybrid joint universality of Dirichlet $L$-functions implies Theorem \ref{thm:1.2} in the rational case. 

If $\alpha \in \mathcal{A}$, then the linear independence of $\log(n_1+\alpha), \ldots, \log(n_k+\alpha)$ is not necessary valid, and no formula similar to \eqref{eq:04010002} is known. 
These difficulties prevent us from resolving the universality of $\zeta(s,\alpha)$ in this case. 
The result of Sourmelidis and Steuding \cite{SourmelidisSteuding2022} provided the first progress on this problem. 
Let $0<c<1$ and $d \geq1$. 
Then we define the set $\mathcal{A}(c,d)$ as
\begin{gather*}
\mathcal{A}(c,d)
= \{\alpha \in \mathcal{A} \mid \text{$c \leq \alpha \leq 1$ and $\deg(\alpha) \leq d$}\}, 
\end{gather*}
where $\deg(\alpha)$ denotes the degree of $\alpha$. 
Let $\eta=4.45$ and $\theta=4/(27 \eta^2) \approx 0.00748$. 
We determine a positive real number $\iota$ by the equation $\iota^3-2\iota^2=2\theta$. 
Then we have $\iota \approx 2.003$. 
Furthermore, we put $\xi=\theta/\iota^2 \approx 0.00186$. 

\begin{theorem}[Sourmelidis--Steuding]\label{thm:1.3}
Let $0<\nu<\xi$, $1-\xi+\nu \leq \sigma_0 \leq 1$, and 
\begin{gather*}
\mu
\leq \left(\frac{\theta}{1-\sigma_0+\nu}\right)^{1/2}. 
\end{gather*} 
Let $f$ be a continuous function on $\mathcal{K}=\{s \in \mathbb{C} \mid |s-s_0| \leq r\}$ which is analytic in the interior of $\mathcal{K}$, where $s_0=\sigma_0+it_0$ with $t_0 \in \mathbb{R}$ and $r>0$. 
Let $0<c<1$ and $d=\mu^2-\theta/\mu^2+\nu$. 
Then, for every $\epsilon \in (0,|f(s_0)|)$, there exists a finite subset $\mathcal{E} \subset \mathcal{A}(c,d)$ with the following property: 
for any $\alpha \in \mathcal{A}(c,d) \setminus \mathcal{E}$, there exist real numbers $\tau \in [T,2T]$ and $\delta=\delta(\epsilon,f,T)>0$ such that
\begin{gather*}
\max_{|s-s_0| \leq \delta r} |\zeta(s+i \tau, \alpha)-f(s)|
< \epsilon, 
\end{gather*}
where $T=T(\epsilon,f,\alpha)$ is a large real number which can be effectively computable. 
The subset $\mathcal{E}$ is described by several effective constants depending on $\epsilon$ and $f$. 
Lastly, the real number $\delta$ is also effectively computable by choosing it to satisfy 
\begin{gather*}
\max_{|s-s_0|=r} |\zeta(s+i \tau, \alpha)| \frac{\delta^N}{1-\delta}
\leq \frac{1}{3}(2-e^{\delta r}) \epsilon 
\end{gather*}
for sufficiently large $N$. 
\end{theorem}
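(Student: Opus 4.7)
My plan is to follow the effective universality method of Garunk\v{s}tis--Laurin\v{c}ikas--Matsumoto--Steuding--Steuding, adapted to algebraic irrational $\alpha$. The first step is to approximate $\zeta(s+i\tau,\alpha)$ on a neighbourhood of $s_0+i\tau$ by a truncated Dirichlet polynomial $\sum_{n=0}^{N}(n+\alpha)^{-s-i\tau}$, where $N$ is chosen so that the tail error at $\Re s=\sigma_0$ is controlled in terms of $\epsilon$; by Mergelyan's theorem one may also replace $f$ by a polynomial, so that the problem reduces to making the Dirichlet polynomial simultaneously close to a prescribed target analytic function on the smaller disc.

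The crux is then the Diophantine question of finding $\tau\in[T,2T]$ for which the phases $(n+\alpha)^{-i\tau}=\exp(-i\tau\log(n+\alpha))$, $n=0,1,\ldots,N$, simultaneously approximate prescribed unimodular targets. For $\alpha$ transcendental the numbers $\log(n+\alpha)$ are $\mathbb{Q}$-linearly independent and Kronecker--Weyl concludes; for $\alpha$ algebraic irrational one must instead invoke Baker-type effective lower bounds for nonzero linear forms in logarithms of algebraic numbers, whose quantitative dependence on the height and degree of $\alpha$ controls how many phases can be handled simultaneously inside $[T,2T]$. The explicit constants $\eta,\theta,\iota,\xi,\mu$ then arise from balancing three competing constraints: the tail length $N$ needed for the Dirichlet polynomial at $\Re s=\sigma_0$; the strength of the Baker lower bound for the resulting linear forms in $\log(n+\alpha)$ with $\deg(\alpha)\le d$; and Cauchy's estimates for the derivatives of $\zeta(s+i\tau,\alpha)$ on $|s-s_0|=r$. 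Optimising these is an elementary but delicate calculus exercise whose extremum is encoded in the cubic condition $\iota^3-2\iota^2=2\theta$, the factor $4/27$ in $\theta$ being the signature of this cubic optimisation. The exceptional set $\mathcal{E}$ consists of those $\alpha\in\mathcal{A}(c,d)$ whose heights are so large that the Baker bound is no longer strong enough; by standard height theory this set is finite and effectively describable.

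Once an approximation $|\zeta(s+i\tau,\alpha)-P_N(s)|<\epsilon_1$ has been obtained on the full disc $|s-s_0|\le r$, the passage to the smaller disc $|s-s_0|\le\delta r$ proceeds by applying Cauchy's integral formula to the Taylor coefficients of $\zeta(s+i\tau,\alpha)-f(s)$ at $s_0$: the first $N$ Taylor coefficients are controlled by the initial approximation, and the tail, bounded by $\max_{|s-s_0|=r}|\zeta(s+i\tau,\alpha)|\cdot\delta^N/(1-\delta)$, is absorbed into the total $\epsilon$ error, which explains both the shape of the inequality defining $\delta$ and the Schwarz-type factor $2-e^{\delta r}$. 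The main obstacle throughout is the Diophantine step: extracting Baker-type lower bounds that are effective and uniform both in $n\le N$ and in the height of $\alpha$ is what really drives the argument, and it is precisely this dependence---rather than any of the real-analytic estimates---that forces the narrow admissible range of $\sigma_0$, the cubic optimisation behind the explicit constants, and the presence of the exceptional set $\mathcal{E}$.
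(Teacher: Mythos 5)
Your sketch does not match how this result is actually established. First, a point of orientation: Theorem \ref{thm:1.3} is not proved in this paper at all --- it is quoted from Sourmelidis--Steuding \cite{SourmelidisSteuding2022}, and the paper records that its proof follows the effective method of Good \cite{Good1980} and Voronin \cite{Voronin1988b} in the manner of \cite{GarunkstisLaurincikasMatsumotoSteudingSteuding2010}, the essential input being a joint denseness theorem for the values $\bigl(\zeta(s_0+i\tau,\alpha),\zeta'(s_0+i\tau,\alpha),\ldots,\zeta^{(N)}(s_0+i\tau,\alpha)\bigr)$ (Theorem 1 of \cite{SourmelidisSteuding2022}); the passage from hitting the Taylor data at $s_0$ to approximation on the shrunken disc $|s-s_0|\le\delta r$ is the Good--Voronin tail argument, and that part of your third paragraph is essentially right and explains the $\delta^N/(1-\delta)$ and $2-e^{\delta r}$ factors.

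The genuine gap is in your central Diophantine step. You propose to choose $\tau\in[T,2T]$ so that all phases $(n+\alpha)^{-i\tau}$, $n\le N$, simultaneously approximate prescribed targets, with Baker-type lower bounds for linear forms in logarithms replacing Kronecker--Weyl. This cannot work as stated: for algebraic irrational $\alpha$ the numbers $\log(n+\alpha)$, $n\le N$, are not known to be (and in general need not be) linearly independent over $\mathbb{Q}$, so the torus orbit $\tau\mapsto\bigl((n+\alpha)^{-i\tau}\bigr)_{n\le N}$ may be confined to a proper subtorus and arbitrary phase targets are simply unreachable. Baker's theorem only gives lower bounds for \emph{nonvanishing} linear forms; it says nothing about the vanishing relations that are the actual obstruction, which is precisely why the Gonek conjecture is open and why both \cite{SourmelidisSteuding2022} and the present paper must work around this point (Sourmelidis--Steuding via a Cassels-type positive-proportion divisibility lemma and effective moment estimates; this paper via the random models $\mathbb{X}_\alpha$, $\mathbb{Y}_\alpha$ and a replacement argument). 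Relatedly, your description of the exceptional set $\mathcal{E}$ as the $\alpha$ of large height is not correct: the exceptional $\alpha$ are those satisfying finitely many multiplicative relations of the shape $\prod_{n}(n+\alpha)^{m_n}=1$ (compare the sets $\mathcal{E}_1,\mathcal{E}_2$ in Section \ref{sec:5} of this paper), and the constants $\eta,\theta,\iota,\xi$ arise from effective analytic estimates for $\zeta(s,\alpha)$ and its moments, not from a cubic optimisation of Baker bounds. So the skeleton of the final Taylor/Cauchy step is fine, but the arithmetic core of your argument would fail for exactly the reason the problem is hard.
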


In fact, we can drop the dependence of $\delta$ on $T$ if $r$ satisfies $0<r<\sigma_0-1/2$. 
This was suggested by Y.~Lee, and a sketch of his idea was described at the end of Section 4 of \cite{SourmelidisSteuding2022}. 
It seems impossible to drop the remaining dependence of $\delta$ by their method.

\subsection{Statements of results}\label{sec:1.2}
Let $0<c<1$. 
Then we define the set $\mathcal{A}_\rho(c)$ as
\begin{gather*}
\mathcal{A}_\rho(c)
= \{\alpha \in \mathcal{A} \mid |\alpha-c| \leq \rho\}, 
\end{gather*}
where $\rho$ satisfies $0<\rho<\min\{c, 1-c\}$. 
In this paper, we prove the universality of $\zeta(s,\alpha)$ for all but finitely many $\alpha \in \mathcal{A}_\rho(c)$. 

\begin{theorem}\label{thm:1.4}
Let $0<c<1$. 
Let $K$ be a compact subset of the strip $D$ with connected complement. 
Let $f$ be a continuous function on $K$ which is analytic in the interior of $K$. 
Then, for every $\epsilon>0$, there exist a positive real number $\rho$ and a finite subset $\mathcal{E} \subset \mathcal{A}$ such that 
\begin{gather*}
\liminf_{T \to\infty} \frac{1}{T} \meas
\left\{ \tau \in [0,T] ~\middle|~ \sup_{s \in K} |\zeta(s+i \tau, \alpha)-f(s)|<\epsilon \right\}
> 0
\end{gather*}
for any $\alpha \in \mathcal{A}_\rho(c) \setminus \mathcal{E}$, where $\rho$ and $\mathcal{E}$ depend on $c,\epsilon,f,K$.  
\end{theorem}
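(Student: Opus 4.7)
The plan is to adapt Bagchi's probabilistic proof of Theorem~\ref{thm:1.2}, replacing the Kronecker--Weyl equidistribution step by an argument that works outside a finite exceptional set. First, by Mergelyan's theorem, reduce to the case where $f=p$ is a polynomial. Next, replace $\zeta(s,\alpha)$ by a smoothly truncated Dirichlet polynomial $\zeta_N(s,\alpha)=\sum_{n=0}^{N-1}(n+\alpha)^{-s}v_N(n)$ for some $N=N(\epsilon,K,p)$, incurring a mean-square error on vertical strips that is controlled uniformly for $\alpha$ in any compact sub-interval of $(0,1)$. The task reduces to producing a set of $\tau\in[0,T]$ of positive lower density with $\sup_{s\in K}|\zeta_N(s+i\tau,\alpha)-p(s)|<\epsilon/2$.

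By Weyl's theorem, the orbit $\tau\mapsto((n+\alpha)^{-i\tau})_{0\le n<N}$ equidistributes on the closed subgroup $G_\alpha\subset\mathbb{T}^N$ whose annihilator in $\mathbb{Z}^N$ is the lattice $\Lambda_\alpha$ of integer relations among $\log(n+\alpha)$. When $\Lambda_\alpha=\{0\}$, Bagchi's Hilbert-space density argument (via rearrangement of the defining Dirichlet series) delivers the required positive density, exactly as in Theorem~\ref{thm:1.2}. Set $\mathcal{E}(N)=\{\alpha\in\mathcal{A}\mid\Lambda_\alpha\ne\{0\}\}$; the crux is to show that $\mathcal{E}(N)\cap\mathcal{A}_\rho(c)$ is finite for $\rho$ sufficiently small.

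A non-trivial relation $\sum_{n=0}^{N-1}m_n\log(n+\alpha)=0$ is equivalent to the identity $\prod_{n=0}^{N-1}(n+\alpha)^{m_n}=1$, which is a non-trivial algebraic equation in $\alpha$ because the factors $n+x$ are pairwise coprime in $\mathbb{Q}[x]$. A priori the multi-index $(m_n)$ can be arbitrarily large, producing infinitely many such equations. To control them I would combine the Taylor estimate
\begin{equation*}
\sum_{n=0}^{N-1} m_n\log(n+\alpha)=\sum_{n=0}^{N-1} m_n\log(n+c)+O_{c,N}\!\bigl(|\alpha-c|\max_n|m_n|\bigr)
\end{equation*}
with Baker's theorem on linear forms in logarithms, which yields
\begin{equation*}
\Bigl|\sum_{n=0}^{N-1}m_n\log(n+c)\Bigr|\ge(\max_n|m_n|)^{-A(c,N)}
\end{equation*}
for every $(m_n)\in\mathbb{Z}^N\setminus\Lambda_c$. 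A relation at $\alpha$ with $|\alpha-c|<\rho$ and $(m_n)\notin\Lambda_c$ then forces $\max_n|m_n|$ bounded by a function of $\rho^{-1}$, so only finitely many such tuples qualify, each contributing finitely many algebraic roots in $\mathcal{A}_\rho(c)$. The relations in $\Lambda_c$ themselves give a fixed finite collection of polynomial equations, again with finitely many roots in $\mathcal{A}_\rho(c)$. Together these bound the cardinality of $\mathcal{E}(N)\cap\mathcal{A}_\rho(c)$.

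The principal obstacle is the Baker-type diophantine step, particularly when $c\in\mathcal{A}$: the constant $A(c,N)$ depends delicately on the height and degree of $c$, and $\Lambda_c$ may itself be non-trivial and requires careful handling. Granted that ingredient, the proof concludes by taking $\mathcal{E}=\mathcal{E}(N)\cap\mathcal{A}_\rho(c)$ and the corresponding $\rho=\rho(c,N,p)$: for any $\alpha\in\mathcal{A}_\rho(c)\setminus\mathcal{E}$ one has $\Lambda_\alpha=\{0\}$, hence $G_\alpha=\mathbb{T}^N$, and the standard Bagchi density argument yields the claimed positive lower density.
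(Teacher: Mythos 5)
The proposal breaks down at its crux: the claim that $\mathcal{E}(N)\cap\mathcal{A}_\rho(c)$, the set of $\alpha$ near $c$ admitting a nontrivial relation among $\log(n+\alpha)$, $0\le n<N$, is finite for small $\rho$. This is false. Already for $N=2$: for positive integers $m_0,m_1$ the equation $\alpha^{m_0}(1+\alpha)^{m_1}=1$ has a unique root in $(0,1)$; this root is algebraic and irrational (a rational root $p/q$ in lowest terms would force $p^{m_0}(p+q)^{m_1}=q^{m_0+m_1}$, impossible unless $q=1$); and since $\alpha\mapsto\log(1+\alpha)/\log(1/\alpha)$ is a continuous increasing bijection of $(0,1)$ onto $(0,\infty)$, these roots are dense in $(0,1)$. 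Hence every neighborhood of $c$ contains infinitely many $\alpha\in\mathcal{A}$ with $\Lambda_\alpha\neq\{0\}$, and no choice of finite $\mathcal{E}$ can reduce to the ``no relations'' case. Your Baker-plus-Taylor step cannot rescue this, because the inequality runs the wrong way: combining $(\max_n|m_n|)^{-A}\le C_{c,N}\,\rho\,\max_n|m_n|$ gives the \emph{lower} bound $\max_n|m_n|\ge (C_{c,N}\,\rho)^{-1/(A+1)}$, i.e.\ it only excludes relations with small exponents, exactly consistent with the counterexample (nearby relation points require large $m_n$). Moreover Baker's theorem needs the numbers $n+c$ to be algebraic, while Theorem \ref{thm:1.4} allows arbitrary, possibly transcendental, $0<c<1$.

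The paper's proof is built precisely to avoid requiring the absence of all relations. The limit theorem (Theorem \ref{thm:3.1}) holds for \emph{every} algebraic $\alpha$ because the random model $\mathbb{X}_\alpha$ is constructed from the prime-ideal and unit decomposition in $\mathbb{Q}(\alpha)$ so that its moments reproduce exactly the multiplicative relations $\prod_n(n+\alpha)^{m_n}=1$ (Lemma \ref{lem:2.1}). The approximation step is carried out for the fully independent model $\mathbb{Y}_\alpha$ via the covering theorem in $A^2(U)$ (Theorem \ref{thm:4.1}), made uniform for $\alpha\in\mathcal{A}_\rho(c)$ by perturbing $(n+c)^{-s}$ to $(n+\alpha)^{-s}$ (Proposition \ref{prop:4.8}). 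The two models are then compared only on the event $\Omega_0$ that each $\mathbb{X}_\alpha(n)$, $0\le n\le N$, lies in a prescribed arc, and the arc indicators are replaced by Beurling--Selberg band-limited approximations with frequencies $|m|\le\Delta$; consequently only relations with exponents bounded by $\Delta$ (plus one ratio $(n_2+\alpha)/(n_1+\alpha)$ with $N<n_1\neq n_2\le L$) can interfere, and these finitely many algebraic equations define the finite set $\mathcal{E}$ (Proposition \ref{prop:5.2}). This device also resolves a second point your sketch glosses over: when relations exist the tail $\zeta(s,\mathbb{X}_\alpha)-\zeta_N(s,\mathbb{X}_\alpha)$ is not independent of the head, so one must bound the \emph{conditional} square mean on $\Omega_0$, which is exactly what the bounded-frequency argument delivers; a naive density count over $\tau$ for the truncated polynomial, with truncation error controlled only in mean square, does not obviously close, since the good-$\tau$ density itself shrinks with $N$.
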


This is not an effective result such as Theorem \ref{thm:1.3}, but we can extend a type of universality of $\zeta(s,\alpha)$ to functions on any compact subset $K$ of $D$ with connected complement. 
It is worth noting that $K$ is independent to $\epsilon,f,T$ in Theorem \ref{thm:1.4}, which is a major improvement from the previous work. 
The proof of Theorem \ref{thm:1.3} in \cite{SourmelidisSteuding2022} was derived by incorporating the ideas of Good \cite{Good1980} and Voronin \cite{Voronin1988b}. 
Then it was necessary to show a joint denseness theorem for the values of derivatives of the Hurwitz zeta-function. 
See \cite[Theorem 1]{SourmelidisSteuding2022}. 
To prove Theorem \ref{thm:1.4}, we do not need such a result. 
Instead, we deduce from Theorem \ref{thm:1.4} the following result. 

\begin{theorem}\label{thm:1.5}
Let $0<c<1$. 
Let $1/2<\sigma_0<1$ and $\underline{z}=(z_0, \ldots, z_N) \in \mathbb{C}^{N+1}$. 
Then, for every $\epsilon>0$, there exist a positive real number $\rho$ and a finite subset $\mathcal{E} \subset \mathcal{A}$ such that 
\begin{gather*}
\liminf_{T \to\infty} \frac{1}{T} \meas
\left\{ \tau \in [0,T] ~\middle|~ \max_{0 \leq n \leq N} |\zeta^{(n)}(\sigma_0+i \tau, \alpha)-z_n|<\epsilon \right\}
> 0
\end{gather*}
for any $\alpha \in \mathcal{A}_\rho(c) \setminus \mathcal{E}$, where $\rho$ and $\mathcal{E}$ depend on $c,\epsilon,\sigma_0,\underline{z}$.  
\end{theorem}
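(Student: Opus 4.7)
The plan is to deduce Theorem \ref{thm:1.5} from Theorem \ref{thm:1.4} by the classical trick of approximating a suitable Taylor polynomial on a small disc about $\sigma_0$ and then using Cauchy's integral formula to transfer the uniform bound into bounds on derivatives at the center.

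First I would fix $r>0$ small enough that the closed disc
\begin{gather*}
K = \{s \in \mathbb{C} \mid |s-\sigma_0| \leq r\}
\end{gather*}
is contained in the strip $D$; since $1/2<\sigma_0<1$, any $r<\min\{\sigma_0-1/2,\,1-\sigma_0\}$ works. The set $K$ is compact with connected complement, so it is admissible for Theorem \ref{thm:1.4}. Then I would take the target function to be the polynomial
\begin{gather*}
f(s) = \sum_{n=0}^{N} \frac{z_n}{n!}(s-\sigma_0)^{n},
\end{gather*}
which is entire and satisfies $f^{(n)}(\sigma_0)=z_n$ for $0 \leq n \leq N$.

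Next, given $\epsilon>0$, I would set $\delta=\epsilon \cdot \min_{0 \leq n \leq N} r^{n}/n!$ and apply Theorem \ref{thm:1.4} with the above $K$, $f$, and with $\delta$ in place of $\epsilon$. This yields a $\rho>0$ and a finite exceptional set $\mathcal{E} \subset \mathcal{A}$, depending only on $c,\sigma_0,\underline{z},\epsilon$, such that for every $\alpha \in \mathcal{A}_\rho(c) \setminus \mathcal{E}$,
\begin{gather*}
\liminf_{T \to\infty} \frac{1}{T} \meas \left\{ \tau \in [0,T] ~\middle|~ \sup_{s \in K}|\zeta(s+i\tau,\alpha)-f(s)|<\delta \right\} > 0.
\end{gather*}
For every such $\tau$, Cauchy's integral formula gives
\begin{gather*}
\zeta^{(n)}(\sigma_0+i\tau,\alpha)-z_n
= \frac{n!}{2\pi i} \oint_{|s-\sigma_0|=r} \frac{\zeta(s+i\tau,\alpha)-f(s)}{(s-\sigma_0)^{n+1}}\,ds,
\end{gather*}
whence $|\zeta^{(n)}(\sigma_0+i\tau,\alpha)-z_n| \leq n!\,\delta/r^{n}\leq\epsilon$ for every $0 \leq n \leq N$. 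The set on which the joint derivative condition of Theorem \ref{thm:1.5} holds thus contains the set produced by Theorem \ref{thm:1.4}, and therefore also has positive lower density.

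The main obstacle is essentially \emph{already resolved} by Theorem \ref{thm:1.4}; the present step is a soft analytic reduction. The only care is to verify that the dependencies match: the $\rho$ and $\mathcal{E}$ provided by Theorem \ref{thm:1.4} depend on $c$, the disc $K$ (which we chose only in terms of $\sigma_0$), the function $f$ (determined by $\underline{z}$ and $\sigma_0$), and the tolerance $\delta$ (determined by $\epsilon$, $r$, and $N$). Hence $\rho$ and $\mathcal{E}$ can indeed be chosen as functions of $c,\epsilon,\sigma_0,\underline{z}$, as required.
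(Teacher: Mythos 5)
Your proposal is correct and follows essentially the same route as the paper: choose the disc $K$ about $\sigma_0$, apply Theorem \ref{thm:1.4} to the Taylor polynomial $f(s)=\sum_{n=0}^{N}\frac{z_n}{n!}(s-\sigma_0)^n$ with a suitably shrunken tolerance, and transfer the uniform bound to the derivatives via Cauchy's integral formula. The only differences are cosmetic (your $\delta=\epsilon\min_n r^n/n!$ versus the paper's $\epsilon'=\epsilon(1+1/r+\cdots+N!/r^N)^{-1}$, and the exact choice of $r$), and your dependency bookkeeping is right.
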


Another consequence of Theorem \ref{thm:1.4} concerns the estimate for the number of zeros of $\zeta(s,\alpha)$. 
Denote by $N_\alpha(\sigma_1,\sigma_2,T)$ the number of the zeros of $\zeta(s,\alpha)$ in the rectangle $\sigma_1 \leq \sigma \leq \sigma_2$, $0 \leq t \leq T$ counted with multiplicity. 
Then an upper bound for $N_\alpha(\sigma_1,\sigma_2,T)$ is well-known for any $0<\alpha \leq1$. 
Indeed, we apply Littlewood's lemma \cite[p.~220]{Titchmarsh1986} to obtain 
\begin{gather*}
N_\alpha(\sigma_1,\sigma_2,T)
\ll T
\end{gather*}
as $T \to\infty$ for any $1/2<\sigma_1<\sigma_2<1$. 
On the other hand, we have a lower bound for $N_\alpha(\sigma_1,\sigma_2,T)$ of the same magnitude for all but finitely many $\alpha \in \mathcal{A}_\rho(c)$. 

\begin{theorem}\label{thm:1.6}
Let $0<c<1$ and $1/2<\sigma_1<\sigma_2<1$. 
Then there exist a positive real number $\rho$ and a finite subset $\mathcal{E} \subset \mathcal{A}$ such that 
\begin{gather*}
N_\alpha(\sigma_1,\sigma_2,T)
\gg T 
\end{gather*}
as $T \to\infty$ for any $\alpha \in \mathcal{A}_\rho(c) \setminus \mathcal{E}$, where $\rho$ and $\mathcal{E}$ depend on $c,\sigma_1,\sigma_2$.  
\end{theorem}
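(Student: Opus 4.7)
The plan is to deduce Theorem \ref{thm:1.6} from Theorem \ref{thm:1.4} via Rouch\'e's theorem, a standard device for converting universality-type approximation results into lower bounds for zero counts. Set $\sigma_0=(\sigma_1+\sigma_2)/2$ and choose $r>0$ small enough that the closed disc $K=\{s\in\mathbb{C}\mid |s-\sigma_0|\leq r\}$ lies in the rectangle $\{\sigma_1<\RE s<\sigma_2\}\subset D$; then $K$ is a compact subset of $D$ with connected complement. Take the target function $f(s)=s-\sigma_0$, which is entire, vanishes simply at $\sigma_0$, and satisfies $|f(s)|=r$ on $\partial K$.

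\textbf{Applying universality.} Apply Theorem \ref{thm:1.4} to this pair $(K,f)$ with $\epsilon=r/2$ to obtain a positive $\rho$ and a finite exceptional set $\mathcal{E}\subset\mathcal{A}$ depending only on $c,\sigma_1,\sigma_2$, such that for every $\alpha\in\mathcal{A}_\rho(c)\setminus\mathcal{E}$ the set
\begin{gather*}
S_T(\alpha)=\left\{\tau\in[0,T] ~\middle|~ \sup_{s\in K}|\zeta(s+i\tau,\alpha)-(s-\sigma_0)|<r/2\right\}
\end{gather*}
has positive lower density. For each $\tau\in S_T(\alpha)$, on $\partial K$ we have $|\zeta(s+i\tau,\alpha)-f(s)|<r/2<r=|f(s)|$, so Rouch\'e's theorem produces a unique zero $s(\tau)$ of $\zeta(s+i\tau,\alpha)$ in the interior of $K$. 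Equivalently, $\zeta(\cdot,\alpha)$ has a zero $w(\tau)=s(\tau)+i\tau$ with $\RE w(\tau)\in(\sigma_1,\sigma_2)$ and $\IM w(\tau)\in(\tau-r,\tau+r)$.

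\textbf{From measure to zero count.} To pass from the measure of $S_T(\alpha)$ to a lower bound on the number of distinct zeros, observe that a fixed zero $w_0$ of $\zeta(\cdot,\alpha)$ can arise as $w(\tau)$ only for $\tau$ satisfying $|w_0-\sigma_0-i\tau|<r$, which confines $\tau$ to an interval of length at most $2r$. Hence the number of distinct zeros produced as $\tau$ ranges over $S_T(\alpha)\cap[r,T-r]$ is at least $(2r)^{-1}\meas\bigl(S_T(\alpha)\cap[r,T-r]\bigr)\gg T$, and every such zero lies in the rectangle $\sigma_1\leq\sigma\leq\sigma_2$, $0\leq t\leq T$; this yields $N_\alpha(\sigma_1,\sigma_2,T)\gg T$.

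\textbf{Main obstacle.} Essentially all the analytic content is absorbed into Theorem \ref{thm:1.4}, so the present deduction is a fairly mechanical Rouch\'e-and-fibre argument. The one point that requires attention is the fibre bound converting $\meas S_T(\alpha)$ into a count of \emph{distinct} zeros: without it, many nearby good $\tau$ could detect the same zero and the lower bound would collapse. Once that observation is in place the argument closes automatically, so the real difficulty of Theorem \ref{thm:1.6} lies entirely in the proof of Theorem \ref{thm:1.4}.
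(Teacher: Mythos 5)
Your proposal is correct and follows essentially the same route as the paper: disc $K$ centered at $\sigma_0=(\sigma_1+\sigma_2)/2$, target $f(s)=s-\sigma_0$, Theorem \ref{thm:1.4} to get a positive-density set of good $\tau$, Rouch\'e's theorem to produce a zero near $\sigma_0+i\tau$, and a fibre/separation argument (each zero is detected only by $\tau$ in an interval of length $\leq 2r$) to convert measure into $\gg T$ distinct zeros. The paper phrases the last step by extracting a $2r$-separated sequence of good $\tau$'s rather than bounding fibres, but this is the same counting argument.
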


Here, we check that the above results imply Theorems \ref{thm:1} and \ref{thm:2} described before. 
Let $\alpha, f, K$ be as in the statement of Theorem \ref{thm:1}. 
Then we apply Theorem \ref{thm:1.4} to see that there exist a positive real number $\rho$ and a finite subset $\mathcal{E} \subset \mathcal{A}$ such that 
\begin{gather*}
\liminf_{T \to\infty} \frac{1}{T} \meas
\left\{ \tau \in [0,T] ~\middle|~ \sup_{s \in K} |\zeta(s+i \tau, \alpha')-f(s)|<\frac{1}{k} \right\}
> 0
\end{gather*}
for any $\alpha' \in \mathcal{A}_\rho(\alpha) \setminus \mathcal{E}$, where $\rho$ and $\mathcal{E}$ depend on $\alpha,f,K$, and $k$. 
For any $k \geq1$, we can take  an element $\alpha_k$ in $\mathcal{A}_\rho(\alpha) \setminus \mathcal{E}$ with $|\alpha-\alpha_k|<1/k$ since $\mathcal{E}$ is a finite set. 
Therefore, Theorem \ref{thm:1} follows if we take $k_0(\epsilon)$ so that $k_0(\epsilon) \geq 1/\epsilon$ is satisfied. 
Let $1/2<\sigma_1<\sigma_2<1$. 
To deduce Theorem \ref{thm:2}, we note that the number of the zeros of $\zeta(s,\alpha)$ in $D$ is greater than $N_\alpha(\sigma_1,\sigma_2,T)$ for any $T>0$. 
Since $N_\alpha(\sigma_1,\sigma_2,T) \to\infty$ as $T \to\infty$ for infinitely many $\alpha \in \mathcal{A}$ by Theorem \ref{thm:1.6}, we obtain the conclusion.

\subsection{Key ideas for the proof}\label{sec:1.3}
Bagchi's method for the proof of the universality of $\zeta(s)$ in \cite{Bagchi1981} begins with constructing a certain random element $\zeta(s,\mathbb{X})$ related to the distribution of $\zeta(s)$. 
Then the probability measure 
\begin{gather}\label{eq:04252342}
P_T(A)
= \frac{1}{T} \meas \left\{ \tau \in [0,T] ~\middle|~ \zeta(s+i \tau) \in A \right\}
\end{gather}
converges weakly as $T \to\infty$ to the law of $\zeta(s,\mathbb{X})$. 
Furthermore, by showing that any function $f$ with nice properties is approximated by $\zeta(s,\mathbb{X})$ with positive probability, we obtain the universality theorem of $\zeta(s)$. 

The proof of Theorem \ref{thm:1.4} is derived by a modification of Bagchi's method. 
First, we construct a random Dirichlet series $\zeta(s,\mathbb{X}_\alpha)$ related to the distribution of $\zeta(s, \alpha)$, according to the method of the previous paper \cite{Mine2022+}. 
Then we prove that a probability measure defined analogously to \eqref{eq:04252342} for $\zeta(s,\alpha)$ converges weakly as $T \to\infty$ to the law of $\zeta(s,\mathbb{X}_\alpha)$. 
See Theorem \ref{thm:3.1}. 
Then Gonek's conjecture would follow if we could show that $f$ is approximated by $\zeta(s,\mathbb{X}_\alpha)$ with positive probability. 

Let $\zeta_N(s,\mathbb{X}_\alpha)$ denote a random Dirichlet polynomial obtained as a partial sum of $\zeta(s,\mathbb{X}_\alpha)$. 
It is quite hard to study $\zeta(s,\mathbb{X}_\alpha)$ and $\zeta_N(s,\mathbb{X}_\alpha)$ for $\alpha \in \mathcal{A}$ due to the lack of the linear independence of $\log(n_1+\alpha), \ldots, \log(n_k+\alpha)$ as described before. 
For this reason, we introduce another random Dirichlet polynomial $\zeta_N(s,\mathbb{Y}_\alpha)$ to show that any function $f$ with nice properties is approximated by $\zeta_N(s,\mathbb{Y}_\alpha)$ with positive probability. 
See Theorem \ref{thm:4.1} and Corollary \ref{cor:4.9}. 
Then, we observe that the error in a replacement of $\zeta_N(s,\mathbb{X}_\alpha)$ with $\zeta_N(s,\mathbb{Y}_\alpha)$ can be ignored in some sense except for a finite number of $\alpha \in \mathcal{A}$. 
This is a key idea in this paper for studying $\zeta(s,\mathbb{X}_\alpha)$ and $\zeta_N(s,\mathbb{X}_\alpha)$. 
The finite subset $\mathcal{E}$ in the statement of Theorem \ref{thm:1.4} comes from the exceptional $\alpha \in \mathcal{A}$ in the replacement of $\zeta_N(s,\mathbb{X}_\alpha)$ with $\zeta_N(s,\mathbb{Y}_\alpha)$.

\subsection{Organization of the paper}\label{sec:1.4}
The main content of this paper is to establish Theorem \ref{thm:1.4}. 
The proof is divided into the following three parts. 

\begin{itemize}
\item[1.] 
\textit{Limit theorem for $\zeta(s,\alpha)$ in the function space.} 
(Section \ref{sec:2} \& Section \ref{sec:3})

Let $H(D)$ be the space of analytic functions on $D$. 
The first step to the proof of universality is to study the distribution of $\zeta(s+i \tau,\alpha)$ for $\tau \in \mathbb{R}$ in the space $H(D)$. 
For this, we introduce in Section \ref{sec:2} a sequence $\{\mathbb{X}_\alpha(n)\}$ of random variables valued on the unit circle of $\mathbb{C}$ such that $\mathbb{X}_\alpha(n_1),\ldots,\mathbb{X}_\alpha(n_k)$ are independent if and only if the real numbers $\log(n_1+\alpha), \ldots, \log(n_k+\alpha)$ are linearly independent over $\mathbb{Q}$. 
Then we define a random Dirichlet series $\zeta(s,\mathbb{X}_\alpha)$ valued on $H(D)$ by using $\{\mathbb{X}_\alpha(n)\}$. 
In Section \ref{sec:3}, we prove the limit theorem (Theorem \ref{thm:3.1}) which asserts that  a probability measure defined analogously to \eqref{eq:04252342} for $\zeta(s,\alpha)$ converges weakly to the law of $\zeta(s,\mathbb{X}_\alpha)$. 

\item[2.] 
\textit{Covering theorem for the Bergman space.}
(Section \ref{sec:4})

Let $A^2(U)$ be the Bergman space for a bounded domain $U$ with $\overline{U} \subset D$. 
We introduce another sequence $\{\mathbb{Y}_\alpha(n)\}$ of random variables valued on the unit circle of $\mathbb{C}$, which satisfies that $\mathbb{Y}_\alpha(n_1),\ldots,\mathbb{Y}_\alpha(n_k)$ are independent for any distinct integers $n_1,\ldots,n_k \geq0$. 
Then we define a random Dirichlet polynomial $\zeta_N(s,\mathbb{Y}_\alpha)$ valued on $A^2(U)$ by using $\{\mathbb{Y}_\alpha(n)\}$. 
In Section \ref{sec:4}, we prove a covering theorem (Theorem \ref{thm:4.1}) which asserts that $A^2(U)$ is covered by neighborhoods of certain sets of Dirichlet polynomials related to $\zeta_N(s,\mathbb{Y}_\alpha)$. 
This theorem yields that any function $f \in A^2(U)$ can be approximated by $\zeta_N(s,\mathbb{Y}_\alpha)$ with positive probability (Corollary \ref{cor:4.9}), if $\rho$ is sufficiently small and $N$ is sufficiently large. 

\item[3.] 
\textit{Estimate of a conditional square mean value.}
(Section \ref{sec:5})

Corollary \ref{cor:4.9} implies that any $f \in A^2(U)$ is approximated by the Dirichlet polynomial $\zeta_N(s,\mathbb{Y}_\alpha)(\omega_0)$ with some sample $\omega_0$. 
Then we define an event $\Omega_0$ so that each of $\mathbb{X}_\alpha(n)$ is close to $\mathbb{Y}_\alpha(n)(\omega_0)$ for $0 \leq n \leq N$. 
By definition, $f$ can be approximated by $\zeta_N(s,\mathbb{X}_\alpha)(\omega)$ if $\omega \in \Omega_0$. 
In Section \ref{sec:5}, we prove an upper bound for the expected value of $\|\zeta(s,\mathbb{X}_\alpha)-\zeta_N(s,\mathbb{X}_\alpha)\|^2$ under the condition $\omega \in \Omega_0$ for $\alpha \in \mathcal{A} \setminus \mathcal{E}$, where $\mathcal{E}$ is a finite subset. 
The key idea described in Section \ref{sec:1.3} is closely related to this result. 
See Proposition \ref{prop:5.2} and Remark \ref{rem:5.3} for the details. 
Then we deduce that any function $f \in H(D)$ is approximated by $\zeta(s,\mathbb{X}_\alpha)$ with positive probability. 
Finally, we obtain the desired approximation of $f$ as in the statement of Theorem \ref{thm:1.4} by applying Theorem \ref{thm:3.1} and the Mergelyan approximation theorem. 
\end{itemize}

Theorems \ref{thm:1.5} and \ref{thm:1.6} are deduced from Theorem \ref{thm:1.4} by standard methods in the theory of universality. 
The proofs are completed in Section \ref{sec:6}.

\subsection*{Acknowledgments}
The author is especially grateful to Ram\={u}nas Garunk\v{s}tis for making me aware of his paper \cite{Garunkstis2005}. 
I would also like to thank Masatoshi Suzuki for valuable comments to improve the quality of the paper. 
The work of this paper was supported by JSPS Grant-in-Aid for Early-Career Scientists (Grant Number 24K16906).

\section{Random Dirichlet series related to $\zeta(s,\alpha)$}\label{sec:2}
Let $H(D)$ denote the space of analytic functions on $D$ equipped with the topology of uniform convergence on compact subsets. 
It is known that $H(D)$ is a complete metric space by the distance $d$ defined as follows. 
Let 
\begin{gather}\label{eq:04240245}
K_\nu
= \left\{s \in \mathbb{C} ~\middle|~ \frac{1}{2}+\frac{1}{5\nu} \leq \sigma \leq 1-\frac{1}{5\nu}, ~ -\nu \leq t \leq \nu \right\}
\end{gather}
for $\nu \geq1$. 
Then $\{K_\nu\}$ is a sequence of compact subsets of $D$ such that $K_\nu \subset K_{\nu+1}$ and $D= \bigcup_{\nu=1}^{\infty} K_\nu$. 
Furthermore, if $K$ is a compact subset of $D$, then $K \subset K_\nu$ holds for some $\nu \geq1$. 
For $f,g \in H(D)$, we define 
\begin{gather}\label{eq:04131758}
d(f,g)
= \sum_{\nu=1}^{\infty} 2^{-\nu} \frac{d_\nu(f,g)}{1+d_\nu(f,g)}, 
\qquad
d_\nu(f,g)
= \sup_{s \in K_\nu} |f(s)-g(s)|. 
\end{gather}
Denote by $\mathcal{B}(H(D))$ the Borel algebra of $H(D)$ with the topology described above. 
The purpose of this section is to introduce two random elements
\begin{gather}\label{eq:04120302}
\zeta(s,\mathbb{X}_\alpha)
= \sum_{n=0}^{\infty} \frac{\mathbb{X}_\alpha(n)}{(n+\alpha)^s}
\quad\text{and}\quad
\zeta(s,\mathbb{Y}_\alpha)
= \sum_{n=0}^{\infty} \frac{\mathbb{Y}_\alpha(n)}{(n+\alpha)^s}
\end{gather}
valued on the space $H(D)$. 
Here, $\{\mathbb{X}_\alpha(n)\}$ and $\{\mathbb{Y}_\alpha(n)\}$ are sequences of certain random variables valued on the unit circle $S^1= \{z \in \mathbb{C} \mid |z|=1\}$.

\subsection{Random variables $\mathbb{X}_\alpha(n)$ and $\mathbb{Y}_\alpha(n)$}\label{sec:2.1}
Let $\alpha$ be an algebraic number with $0<\alpha \leq1$. 
The construction of the random variable $\mathbb{X}_\alpha(n)$ is essentially the same as in the previous paper \cite{Mine2022+}. 
Let $\mathcal{K}=\mathbb{Q}(\alpha)$ be the algebraic field generated by $\alpha$, and denote by $\mathcal{O}_\mathcal{K}$ the ring of integers of $\mathcal{K}$. 
Let $h$ be the class number of $\mathcal{K}$. 
Recall that $\mathfrak{a}^h$ is a principal ideal of $\mathcal{K}$ for any ideal $\mathfrak{a}$. 
Then, for any prime ideal $\mathfrak{p}$, we fix an element $\varpi_{\mathfrak{p}} \in \mathcal{O}_\mathcal{K}$ such that $\mathfrak{p}^h=(\varpi_{\mathfrak{p}})$ and $\varpi_{\mathfrak{p}}>0$. 
Let $y \in K$ with $y>0$. 
The fractional principal ideal $(y)$ has the decomposition
\begin{gather*}
(y)
= \mathfrak{p}_1^{a_1} \cdots \mathfrak{p}_k^{a_k}, 
\end{gather*}
where $\mathfrak{p}_j$ are prime ideals, and $a_j \in \mathbb{Z}$ are uniquely determined by $y$. 
It yields
\begin{gather*}
(y)^h 
= \mathfrak{p}_1^{h a_1} \cdots \mathfrak{p}_k^{h a_k}
= (\varpi_{\mathfrak{p}_1}^{a_1} \cdots \varpi_{\mathfrak{p}_k}^{a_k}). 
\end{gather*}
Next, we fix a fundamental system $(u_1, \ldots, u_d)$ of units of $\mathcal{O}_\mathcal{K}$ such that $u_j>0$ for every $j$. 
Then $y^h$ has the decomposition
\begin{gather}\label{eq:04120008}
y^h
= \varpi_{\mathfrak{p}_1}^{a_1} \cdots \varpi_{\mathfrak{p}_k}^{a_k} u_1^{b_1} \cdots u_d^{b_d}, 
\end{gather}
where $b_j \in \mathbb{Z}$ are uniquely determined by $y$. 
Let 
\begin{gather*}
\Lambda_1
= \{\varpi_\mathfrak{p} \mid \text{$\mathfrak{p}$ is a prime ideal of $\mathcal{K}$} \}
\cup \{u_1, u_2, \ldots, u_d\}. 
\end{gather*}
For $\lambda \in \Lambda_1$, we define $\ord(y,\lambda)$ as 
\begin{gather}\label{eq:04120112}
\ord(y,\lambda)
=
\begin{cases}
a_j
& \text{if $\lambda=\varpi_{\mathfrak{p}_j}$ for some $\varpi_{\mathfrak{p}_j}$ in \eqref{eq:04120008}}, 
\\
b_j
& \text{if $\lambda=u_j$ for some $u_j$ in \eqref{eq:04120008}}, 
\\
0
& \text{otherwise}. 
\end{cases}
\end{gather}
By the uniqueness of $a_j$ and $b_j$, we have the identity
\begin{gather}\label{eq:04120224}
\ord(y_1y_2,\lambda)
= \ord(y_1,\lambda)+\ord(y_2,\lambda)
\end{gather}
for any $\lambda \in \Lambda_1$ and any $y_1,y_2 \in \mathcal{K}$ with $y_1,y_2>0$. 
We are now ready to define the random variable $\mathbb{X}_\alpha(n)$. 
For any $s,t \in \mathbb{R}$ with $0<t-s \leq 2\pi$, we define
\begin{gather*}
\mathbf{m}(A(s,t))
= \frac{t-s}{2\pi}, 
\end{gather*}
where $A(s,t)$ is the arc of the unit circle $S^1$ defined as
\begin{gather}\label{eq:04212057}
A(s,t)
= \{e^{i \theta} \mid s<\theta<t \}.
\end{gather}
Then $\mathbf{m}$ is extended to a probability measure on $(S^1, \mathcal{B}(S^1))$, where $\mathcal{B}(S^1)$ denotes the Borel algebra of $S^1$ with the usual topology. 
Let $\Omega_1=\prod_{\lambda \in \Lambda_1} S^1$, and denote by $\mathcal{F}_1$ the product $\sigma$-algebra $\bigotimes_{\lambda_ \in \Lambda_1} \mathcal{B}(S^1)$. 
By the Kolmogorov extension theorem, there exists a probability measure $\mathbf{P}_1$ on $(\Omega_1, \mathcal{F}_1)$ such that 
\begin{gather}\label{eq:04121502}
\mathbf{P}_1 \left(\left\{\omega=\{\omega_\lambda\} \in \Omega_1 ~\middle|~ 
\text{$\omega_{\lambda_j} \in A(s_j,t_j)$ for $j=1,\ldots,k$} \right\}\right)
= \prod_{j=1}^{k} \mathbf{m}(A(s_j,t_j))
\end{gather}
for any finite subset $\{\lambda_1,\ldots,\lambda_k\} \subset \Lambda_1$ and any $s_j,t_j \in \mathbb{R}$ with $0<t_j-s_j \leq 2\pi$. 
Denote by $\mathcal{X}(\lambda): \Omega_1 \to S^1$ the $\lambda$-th projection $\omega \mapsto \omega_\lambda$. 
Then we define
\begin{gather*}
\mathbb{X}_\alpha(n)
= \prod_{\lambda \in \Lambda_1} \mathcal{X}(\lambda)^{\ord(n+\alpha, \lambda)}
\end{gather*}
for $n \geq0$ by using $\ord(y,\lambda)$ defined as \eqref{eq:04120112}. 
By definition, it is an $S^1$-valued random variable for any $n \geq0$. 

On the other hand, the random variable $\mathbb{Y}_\alpha(n)$ is defined as follows. 
Let 
\begin{gather*}
\Lambda_2
= \{n \in \mathbb{Z} \mid n \geq0\}. 
\end{gather*}
Let $\Omega_2=\prod_{n \in \Lambda_2} S^1$, and denote by $\mathcal{F}_2$ the product $\sigma$-algebra $\bigotimes_{n \in \Lambda_2} \mathcal{B}(S^1)$. 
By the Kolmogorov extension theorem, there exists a probability measure $\mathbf{P}_2$ on $(\Omega_2, \mathcal{F}_2)$ such that 
\begin{gather*}
\mathbf{P}_2 \left(\left\{\omega=\{\omega_n\} \in \Omega_2 ~\middle|~ 
\text{$\omega_{n_j} \in A(s_j,t_j)$ for $j=1,\ldots,k$} \right\}\right)
= \prod_{j=1}^{k} \mathbf{m}(A(s_j,t_j))
\end{gather*}
for any finite subset $\{n_1,\ldots,n_k\} \subset \Lambda_2$ and any $s_j,t_j \in \mathbb{R}$ with $0<t_j-s_j \leq 2\pi$. 
Then we define the $S^1$-valued random variable $\mathbb{Y}_\alpha(n)$ as the $n$-th projection $\omega \mapsto \omega_n$ for $n \geq0$.  

For convenience, we regard $\mathbb{X}_\alpha(n)$ and $\mathbb{Y}_\alpha(n)$ as random variables defined on the same probability space $(\Omega,\mathcal{F},\mathbf{P})$ as follows. 
Let $\Omega=\Omega_1 \times \Omega_2$, $\mathcal{F}=\mathcal{F}_1 \otimes \mathcal{F}_2$, and $\mathbf{P}=\mathbf{P}_1 \otimes \mathbf{P}_2$. 
Then we put
\begin{gather*}
\mathbb{X}_\alpha(n)(\omega^1,\omega^2)= \mathbb{X}_\alpha(n)(\omega^1)
\quad\text{and}\quad
\mathbb{Y}_\alpha(n)(\omega^1,\omega^2)= \mathbb{Y}_\alpha(n)(\omega^2)
\end{gather*}
for $(\omega^1,\omega^2) \in \Omega$ with $\omega^1 \in \Omega_1$ and $\omega^2 \in \Omega_2$. 
In general, the expected value of a $\mathbb{C}$-valued random variable $\mathcal{X}$ defined on $(\Omega,\mathcal{F},\mathbf{P})$ is denoted by 
\begin{gather*}
\mathbf{E}[\mathcal{X}]
= \int_{\Omega} \mathcal{X} \,d\mathbf{P}
\end{gather*}
as usual. 
We abbreviate $\mathbf{P}\left(\{\omega \in \Omega \mid \mathcal{X}(\omega) \in A \}\right)$ as $\mathbf{P}\left(\mathcal{X} \in A\right)$ for simplicity, which is called the law of a random element $\mathcal{X}$. 

\begin{lemma}\label{lem:2.1}
Let $\alpha$ be an algebraic number with $0<\alpha \leq1$. 
Then we have
\begin{gather*}
\mathbf{E}\left[\mathbb{X}_\alpha(n_1)^{m_1} \cdots \mathbb{X}_\alpha(n_k)^{m_k}\right]
=
\begin{cases}
1
& \text{if $(n_1+\alpha)^{m_1}\cdots(n_k+\alpha)^{m_k}=1$},
\\
0
& \text{otherwise}
\end{cases}
\end{gather*}
for any integers $n_1, \ldots, n_k \geq0$ and $m_1, \ldots, m_k \in \mathbb{Z}$. 
Furthermore, 
\begin{gather*}
\mathbf{E}\left[\mathbb{Y}_\alpha(n_1)^{m_1} \cdots \mathbb{Y}_\alpha(n_k)^{m_k}\right]
=
\begin{cases}
1
& \text{if $m_1=\cdots=m_k=0$},
\\
0
& \text{otherwise}
\end{cases}
\end{gather*}
for any distinct integers $n_1, \ldots, n_k \geq0$ and $m_1, \ldots, m_k \in \mathbb{Z}$. 
\end{lemma}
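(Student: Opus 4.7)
The plan is to reduce both computations to the basic Fourier identity
\begin{gather*}
\mathbf{E}[\mathcal{X}(\lambda)^{m}]
= \int_{S^1} z^{m} \, d\mathbf{m}(z)
= \begin{cases} 1 & m = 0, \\ 0 & m \ne 0, \end{cases}
\end{gather*}
combined with the independence of the coordinate projections guaranteed by the product measures $\mathbf{P}_1$ and $\mathbf{P}_2$ constructed in \eqref{eq:04121502}. The main work is purely bookkeeping for the $\mathbb{X}_\alpha$ case; the $\mathbb{Y}_\alpha$ case is essentially immediate.

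For the statement about $\mathbb{Y}_\alpha$, note that since $n_1,\ldots,n_k$ are distinct, the random variables $\mathbb{Y}_\alpha(n_1),\ldots,\mathbb{Y}_\alpha(n_k)$ are simply distinct coordinate projections under $\mathbf{P}_2$, hence independent with each uniform on $(S^1, \mathbf{m})$. Therefore
\begin{gather*}
\mathbf{E}\!\left[\prod_{j=1}^{k} \mathbb{Y}_\alpha(n_j)^{m_j}\right]
= \prod_{j=1}^{k} \mathbf{E}\!\left[\mathbb{Y}_\alpha(n_j)^{m_j}\right],
\end{gather*}
and each factor vanishes unless $m_j = 0$, giving the claim.

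For the $\mathbb{X}_\alpha$ case, I would first use the multiplicativity relation \eqref{eq:04120224} to rewrite, for each $\lambda \in \Lambda_1$,
\begin{gather*}
\sum_{j=1}^{k} m_j \,\ord(n_j+\alpha, \lambda)
= \ord(y, \lambda),
\qquad
y := \prod_{j=1}^{k} (n_j+\alpha)^{m_j} \in \mathcal{K}_{>0}.
\end{gather*}
Since $0 < \alpha \leq 1$ ensures $n_j + \alpha > 0$, the element $y$ is positive, so the decomposition \eqref{eq:04120008} applies. Collecting exponents factor by factor, we obtain
\begin{gather*}
\mathbb{X}_\alpha(n_1)^{m_1} \cdots \mathbb{X}_\alpha(n_k)^{m_k}
= \prod_{\lambda \in \Lambda_1} \mathcal{X}(\lambda)^{\ord(y,\lambda)},
\end{gather*}
a finite product since only finitely many $\ord(y,\lambda)$ are nonzero. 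Taking expectations and invoking independence of the $\mathcal{X}(\lambda)$ under $\mathbf{P}_1$, the expectation factors into the Fourier integrals above, which yields $1$ if $\ord(y,\lambda) = 0$ for every $\lambda \in \Lambda_1$ and $0$ otherwise.

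It remains to identify the vanishing-order condition with $y = 1$. If $\ord(y,\lambda) = 0$ for every $\lambda$, then by \eqref{eq:04120008} we have $y^h = 1$, and since $y > 0$ this forces $y = 1$. Conversely, $y = 1$ makes all $a_j$ and $b_j$ in the decomposition of $y^h = 1$ equal to zero by uniqueness, hence all $\ord(y,\lambda) = 0$. The only conceptual point that requires care is this uniqueness, which relies on the unique factorization of ideals in $\mathcal{O}_\mathcal{K}$ together with the fact that $(u_1,\ldots,u_d)$ is a fundamental system of positive units, so no obstacle arises. Combining the two directions finishes the proof.
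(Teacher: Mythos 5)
Your proposal is correct and follows essentially the same route as the paper: define $y=\prod_j(n_j+\alpha)^{m_j}$, use the additivity of $\ord(\cdot,\lambda)$ to write the product of the $\mathbb{X}_\alpha(n_j)^{m_j}$ as $\prod_\lambda \mathcal{X}(\lambda)^{\ord(y,\lambda)}$, factor the expectation via the product measure, and observe that vanishing of all exponents is equivalent to $y^h=1$, hence to $y=1$ since $y>0$; the $\mathbb{Y}_\alpha$ case is handled by independence of distinct coordinates exactly as in the paper. No gaps.
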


\begin{proof}
The first result has already been proved in \cite{Mine2022+}, but we present the full proof here because of its importance. 
Put
\begin{gather*}
y
= (n_1+\alpha)^{m_1}\cdots(n_k+\alpha)^{m_k}. 
\end{gather*}
Applying \eqref{eq:04120224}, we obtain 
\begin{gather*}
\ord(y,\lambda)
= m_1 \ord(n_1+\alpha,\lambda) +\cdots+ m_k \ord(n_k+\alpha,\lambda)
\end{gather*}
for any $\lambda \in \Lambda_1$. 
Therefore the formula
\begin{align*}
\mathbb{X}_\alpha(n_1)^{m_1} \cdots \mathbb{X}_\alpha(n_k)^{m_k}
&= \prod_{\lambda \in \Lambda_1} 
\mathcal{X}(\lambda)^{m_1 \ord(n_1+\alpha, \lambda) +\cdots+ m_k \ord(n_k+\alpha, \lambda)} \\
&= \prod_{\lambda \in \Lambda_1} 
\mathcal{X}(\lambda)^{\ord(y,\lambda)}
\end{align*}
holds by the definition of $\mathbb{X}_\alpha(n_j)$. 
Then the expected value is calculated as
\begin{align}\label{eq:04120238}
\mathbf{E}\left[\mathbb{X}_\alpha(n_1)^{m_1} \cdots \mathbb{X}_\alpha(n_k)^{m_k}\right]
&= \int_{\Omega_1} \prod_{\lambda \in \Lambda_1} \mathcal{X}(\lambda)^{\ord(y,\lambda)} \,d\mathbf{P}_1 \\
&= \prod_{\lambda \in \Lambda_1} \int_{S_1} \omega_\lambda^{\ord(y,\lambda)} \,d\mathbf{m}(\omega_\lambda) \nonumber
\end{align}
by \eqref{eq:04121502} and Fubini's theorem. 
We see that 
\begin{gather}\label{eq:04120245}
\int_{S_1} \omega_\lambda^q \,d\mathbf{m}(\omega_\lambda)
= \frac{1}{2\pi} \int_{0}^{2\pi} e^{i q \theta} \,d \theta
= 
\begin{cases}
1
& \text{if $q=0$},
\\
0
& \text{otherwise} 
\end{cases}
\end{gather}
for any $q \in \mathbb{Z}$. 
Hence, the right-hand side of \eqref{eq:04120238} is
\begin{gather*}
\prod_{\lambda \in \Lambda_1} \int_{S_1} \omega_\lambda^{\ord(y,\lambda)} \,d\mathbf{m}(\omega_\lambda)
=
\begin{cases}
1
& \text{if $\ord(y,\lambda)=0$ for any $\lambda \in \Lambda_1$}, 
\\
0
& \text{otherwise}. 
\end{cases}
\end{gather*}
The condition that $\ord(y,\lambda)=0$ for any $\lambda \in \Lambda_1$ is equivalent to $y^h=1$ by \eqref{eq:04120008}. 
Note that $y^h=1$ if and only if $y=1$ since $y$ is a positive real number. 
Therefore, the first result follows. 
The second result is proved more easily. 
We have 
\begin{align*}
\mathbf{E}\left[\mathbb{Y}_\alpha(n_1)^{m_1} \cdots \mathbb{Y}_\alpha(n_k)^{m_k}\right]
&= \int_{\Omega_2} \prod_{j=1}^{k} \mathbb{Y}_\alpha(n_j)^{m_j} \,d\mathbf{P}_2 \\
&= \prod_{j=1}^{k} \int_{S_1} \omega_{n_j}^{m_j} \,d\mathbf{m}(\omega_{n_j}) 
\end{align*}
by the definition of $\mathbb{Y}_\alpha(n_j)$. 
Using \eqref{eq:04120245}, we obtain the conclusion. 
\end{proof}

Moreover, it was proved in \cite{Mine2022+} that $\mathbb{X}_\alpha(n)$ is uniformly distributed on $S^1$ if $0<\alpha<1$, and that $\mathbb{X}_\alpha(n_1), \ldots, \mathbb{X}_\alpha(n_k)$ are independent if and only if the real numbers $\log(n_1+\alpha), \ldots, \log(n_k+\alpha)$ are linearly independent over $\mathbb{Q}$. 
It is rather clear that $\mathbb{Y}_\alpha(n)$ is uniformly distributed on $S^1$, and that $\mathbb{Y}_\alpha(n_1),\ldots,\mathbb{Y}_\alpha(n_k)$ are independent for any distinct integers $n_1,\ldots,n_k \geq0$.

\subsection{Random Dirichlet series $\zeta(s,\mathbb{X}_\alpha)$ and $\zeta(s,\mathbb{Y}_\alpha)$}\label{sec:2.2}
Using the random variables $\mathbb{X}_\alpha(n)$ and $\mathbb{Y}_\alpha(n)$ constructed in Section \ref{sec:2.1}, we define the Dirichlet polynomials
\begin{gather*}
\zeta_N(s,\mathbb{X}_\alpha)
= \sum_{n=0}^{N} \frac{\mathbb{X}_\alpha(n)}{(n+\alpha)^s}
\quad\text{and}\quad
\zeta_N(s,\mathbb{Y}_\alpha)
= \sum_{n=0}^{N} \frac{\mathbb{Y}_\alpha(n)}{(n+\alpha)^s}
\end{gather*}
for $N \geq0$. 
To begin with, we check that they are random elements valued on the space $H(D)$. 

\begin{lemma}\label{lem:2.2}
Let $\alpha$ be an algebraic number with $0<\alpha \leq1$. 
Then $\zeta_N(s,\mathbb{X}_\alpha)$ and $\zeta_N(s,\mathbb{Y}_\alpha)$ are random elements valued on $H(D)$ for any $N \geq0$. 
\end{lemma}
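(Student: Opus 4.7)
The plan is to show that each summand $\mathbb{X}_\alpha(n)/(n+\alpha)^s$ defines a measurable map into $H(D)$, and then use continuity of finite summation on $H(D)$ to conclude. Since the argument for $\mathbb{Y}_\alpha$ is literally identical to that for $\mathbb{X}_\alpha$ (only the choice of underlying random variable changes), I will describe only the $\mathbb{X}_\alpha$ case.

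First, I would record the basic topological facts about $H(D)$. The metric $d$ in \eqref{eq:04131758} makes $H(D)$ a separable topological vector space in which addition and scalar multiplication are jointly continuous: this is immediate from the coordinatewise definition, because each $d_\nu(\cdot,\cdot)$ comes from the supremum seminorm on the compact set $K_\nu$ of \eqref{eq:04240245}, under which uniform convergence is preserved by linear operations. In particular, for each fixed $n \geq 0$, the map $\phi_n : \mathbb{C} \to H(D)$ sending $z$ to the entire function $s \mapsto z \,(n+\alpha)^{-s}$ is linear, and
\[
d_\nu\bigl(\phi_n(z), \phi_n(z')\bigr) = |z-z'| \sup_{s \in K_\nu} (n+\alpha)^{-\RE s}
\]
is finite on every $K_\nu$, so $\phi_n$ is continuous as a map into $H(D)$.

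Next, I would verify measurability in two steps. The function $\mathbb{X}_\alpha(n) : \Omega \to S^1 \subset \mathbb{C}$ is measurable by construction, so composing it with the continuous $\phi_n$ yields a measurable $H(D)$-valued map $\omega \mapsto \phi_n\bigl(\mathbb{X}_\alpha(n)(\omega)\bigr)$. Because addition on $H(D)$ is continuous, the finite sum
\[
\zeta_N(s,\mathbb{X}_\alpha) = \sum_{n=0}^{N} \phi_n \circ \mathbb{X}_\alpha(n)
\]
is $\mathcal{F}/\mathcal{B}(H(D))$-measurable, which is exactly what it means for $\zeta_N(s,\mathbb{X}_\alpha)$ to be a random element of $H(D)$. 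The same steps with $\mathbb{Y}_\alpha(n)$ in place of $\mathbb{X}_\alpha(n)$ dispose of the other case.

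This argument is routine and I do not anticipate a real obstacle; the one point worth being mildly careful about is that $H(D)$ is only metrizable and not normed, so I would check continuity of the vector-space operations and of $\phi_n$ directly from the seminorms $d_\nu$ in \eqref{eq:04131758} rather than appealing to any normed-space shortcut. Everything else reduces to the standard fact that compositions of measurable maps with continuous maps are measurable.
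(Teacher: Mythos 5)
Your proposal is correct and is essentially the paper's argument: the paper composes the (measurable) tuple $(\mathbb{X}_\alpha(0),\ldots,\mathbb{X}_\alpha(N))$ with the single continuous map $\psi:\prod_{n=0}^{N}S^1\to H(D)$, $(\gamma_0,\ldots,\gamma_N)\mapsto\sum_{n=0}^{N}\gamma_n(n+\alpha)^{-s}$, whereas you factor the same map through the coordinate maps $\phi_n$ and continuity of addition in $H(D)$, which amounts to the same continuity-plus-measurability observation (your appeal to separability of $H(D)$ correctly covers the measurability of the sum). No gap; nothing further is needed.
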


\begin{proof}
Note that the map $\psi: \prod_{n=0}^{N} S^1 \to H(D)$ defined as
\begin{gather*}
(\gamma_0, \ldots, \gamma_N) \mapsto \sum_{n=0}^{N} \frac{\gamma_n}{(n+\alpha)^s}
\end{gather*}
is continuous. 
Since $\zeta_N(s,\mathbb{X}_\alpha)$ and $\zeta_N(s,\mathbb{Y}_\alpha)$ are obtained as compositions of $\psi$ and the $S^1$-valued random variables $\mathbb{X}_\alpha(n)$ and $\mathbb{Y}_\alpha(n)$, they are random elements valued on $H(D)$. 
\end{proof}

To consider the convergences of the random Dirichlet series $\zeta(s,\mathbb{X}_\alpha)$ and $\zeta(s,\mathbb{Y}_\alpha)$ as in \eqref{eq:04120302}, we apply the following version of the Menshov--Rademacher theorem. 

\begin{lemma}[Menshov--Rademacher theorem]\label{lem:2.3}
Let $\{\mathcal{X}_n\}$ be a sequence of $\mathbb{C}$-valued random variables defined on the probability space $(\Omega, \mathcal{F}, \mathbf{P})$. 
Then the random series 
\begin{gather*}
\sum_{n=1}^{\infty} a_n \mathcal{X}_n
\end{gather*}
converges almost surely if the following conditions are satisfied.  
\begin{itemize}
\item[$(\mathrm{i})$] 
$\mathbf{E}[\mathcal{X}_n]=0$ for any $n \geq1$. 
\item[$(\mathrm{ii})$] 
The sequence $\{\mathcal{X}_n\}$ is orthonormal in the sense that
\begin{gather*}
\mathbf{E}[\mathcal{X}_m \overline{\mathcal{X}_n}]
=
\begin{cases}
1
& \text{if $m=n$},
\\
0
& \text{otherwise}. 
\end{cases}
\end{gather*}
\item[$(\mathrm{iii})$] 
We have 
\begin{gather*}
\sum_{n=1}^{\infty} |a_n|^2 (\log{n})^2
< \infty. 
\end{gather*}
\end{itemize}
\end{lemma}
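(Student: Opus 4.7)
The plan is to follow the classical two-step approach for orthogonal series: first establish the Rademacher--Menshov maximal inequality, and then combine it with a dyadic decomposition to upgrade $L^2$-convergence to almost sure convergence of the partial sums $S_N = \sum_{n=1}^{N} a_n \mathcal{X}_n$.

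The first step is to prove the maximal inequality
\begin{gather*}
\mathbf{E}\left[\max_{1 \leq k \leq N} |S_k|^2 \right]
\leq C (\log N)^2 \sum_{n=1}^{N} |a_n|^2
\end{gather*}
for an absolute constant $C>0$. I would argue by a dyadic decomposition of the index set $\{1,\ldots,N\}$: every $k \leq N$ can be written as a union of at most $\lceil \log_2 N \rceil$ consecutive dyadic intervals, so that $S_k$ splits as a sum of that many block sums of the form $B_{j,\ell} = \sum_{n \in I_{j,\ell}} a_n \mathcal{X}_n$. Cauchy--Schwarz in the block index, followed by orthonormality (condition (ii), which gives $\mathbf{E}[|B_{j,\ell}|^2] = \sum_{n \in I_{j,\ell}} |a_n|^2$), produces the claimed $(\log N)^2$ factor once one bounds the maximum over $k$ by a sum over scales of the maxima on each scale.

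With the maximal inequality available, set
\begin{gather*}
U_k
= \max_{2^{k-1}< m \leq 2^k} \left| \sum_{n=2^{k-1}+1}^{m} a_n \mathcal{X}_n \right|.
\end{gather*}
Applying Step 1 to the orthonormal subfamily $\{\mathcal{X}_n\}_{n=2^{k-1}+1}^{2^k}$ yields $\mathbf{E}[U_k^2] \ll k^2 \sum_{n=2^{k-1}+1}^{2^k} |a_n|^2$, and so
\begin{gather*}
\sum_{k=1}^{\infty} \mathbf{E}[U_k^2]
\ll \sum_{n=1}^{\infty} |a_n|^2 (\log n)^2 <\infty
\end{gather*}
by hypothesis (iii). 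Hence $U_k \to 0$ almost surely. Meanwhile, conditions (i) and (ii) together with $\sum_n |a_n|^2<\infty$ (a consequence of (iii)) show that $S_{2^k}$ is $L^2$-Cauchy; a standard Chebyshev and Borel--Cantelli argument upgrades this to almost sure convergence of the subsequence $\{S_{2^k}\}$. Writing any $S_n$ with $2^{k-1}<n \leq 2^k$ as $S_{2^{k-1}}$ plus a partial block sum bounded in absolute value by $U_k$, the two facts combine to yield almost sure convergence of the full sequence $S_n$.

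The main obstacle is Step 1: the delicate dyadic book-keeping needed to extract the $(\log N)^2$ factor on the maximum, rather than the much worse $N \sum |a_n|^2$ one would get from Bessel's inequality alone. Once this maximal inequality is in place, the reduction to the dyadic subsequence in Step 2 is a routine Borel--Cantelli exercise, and the role of the weight $(\log n)^2$ in hypothesis (iii) becomes transparent: it is exactly the cost of exchanging control of the subsequence $S_{2^k}$ for control of all $S_n$.
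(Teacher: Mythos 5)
The paper offers no argument for this lemma at all: its ``proof'' is simply a reference to \cite[Theorem B.10.5]{Kowalski2021}. Your outline is the classical Rademacher--Menshov argument, which is exactly what such a reference hides, and Step 1 (the maximal inequality via dyadic decomposition and Cauchy--Schwarz in the block index) as well as the final filling-in step with the block maxima $U_k$ are correct as sketched. (Incidentally, hypothesis (i) is never used; orthonormality alone drives all the second-moment computations.)

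There is, however, one genuine gap: the dyadic subsequence. You claim that $\sum_n |a_n|^2<\infty$ makes $\{S_{2^k}\}$ $L^2$-Cauchy and that ``a standard Chebyshev and Borel--Cantelli argument'' upgrades this to almost sure convergence of that subsequence. It does not: $L^2$-convergence yields almost sure convergence only along a further subsequence, and Chebyshev plus Borel--Cantelli needs thresholds $\epsilon_k$ with $\sum_k \epsilon_k<\infty$ and $\sum_k b_k\epsilon_k^{-2}<\infty$, where $b_k=\sum_{2^{k-1}<n\le 2^k}|a_n|^2$; mere summability of $b_k$ does not supply such $\epsilon_k$. Indeed, if square-summable orthogonal increments forced almost sure convergence of a sequence, every orthonormal series with $\ell^2$ coefficients would converge almost surely, contradicting Menshov's counterexample --- so this step cannot be ``standard'' without using the logarithmic weight. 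The repair is short and uses (iii) a second time: since $\mathbf{E}\bigl[|S_{2^k}-S_{2^{k-1}}|^2\bigr]=b_k$ and $\sum_k k^2 b_k \ll \sum_n |a_n|^2(\log n)^2<\infty$ (because $\log n \asymp k$ on the $k$-th block), Cauchy--Schwarz over $k$ gives $\sum_k \mathbf{E}\bigl[|S_{2^k}-S_{2^{k-1}}|\bigr]\le \sum_k \sqrt{b_k}\le \bigl(\sum_k k^2 b_k\bigr)^{1/2}\bigl(\sum_k k^{-2}\bigr)^{1/2}<\infty$, hence $\sum_k |S_{2^k}-S_{2^{k-1}}|<\infty$ almost surely and $S_{2^k}$ converges almost surely; alternatively, apply your maximal inequality to tail sums. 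With this substitution your proof is complete and coincides with the textbook argument the paper cites.
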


\begin{proof}
See \cite[Theorem B.10.5]{Kowalski2021} for a proof. 
\end{proof}

\begin{proposition}\label{prop:2.4}
Let $\alpha$ be an algebraic number with $0<\alpha \leq1$. 
Then $\zeta(s,\mathbb{X}_\alpha)$ and $\zeta(s,\mathbb{Y}_\alpha)$ are random elements valued on $H(D)$. 
\end{proposition}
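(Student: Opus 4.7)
The plan is to show that the partial sums $\zeta_N(s,\mathbb{X}_\alpha)$ and $\zeta_N(s,\mathbb{Y}_\alpha)$ converge almost surely in $H(D)$ as $N\to\infty$. Combined with Lemma \ref{lem:2.2} and the completeness of $H(D)$, this yields the proposition: the almost sure limit is measurable, and we may define it arbitrarily off the event of convergence.

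The main tool is Lemma \ref{lem:2.3}, and I first verify its hypotheses for $\{\mathbb{X}_\alpha(n)\}$. Applying Lemma \ref{lem:2.1} with $k=1$, $m_1=1$ shows $\mathbf{E}[\mathbb{X}_\alpha(n)]=0$ whenever $n+\alpha\neq 1$; among the integers $n\geq 0$ with $0<\alpha\leq 1$, this fails only in the isolated case $(\alpha,n)=(1,0)$, where $\mathbb{X}_1(0)$ is an empty product equal to $1$ and may be treated as a deterministic term. Taking $k=2$, $m_1=1$, $m_2=-1$ in Lemma \ref{lem:2.1} yields orthonormality $\mathbf{E}[\mathbb{X}_\alpha(m)\overline{\mathbb{X}_\alpha(n)}]=\delta_{m,n}$, since $(m+\alpha)(n+\alpha)^{-1}=1$ forces $m=n$. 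For any fixed $s_0=\sigma_0+it_0\in D$, the sum $\sum_n(n+\alpha)^{-2\sigma_0}(\log n)^2$ converges because $2\sigma_0>1$, so Lemma \ref{lem:2.3} yields almost sure convergence of the scalar series $\zeta_N(s_0,\mathbb{X}_\alpha)$. The analogous hypotheses for $\{\mathbb{Y}_\alpha(n)\}$ follow at once from their being independent and uniformly distributed on $S^1$.

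To upgrade pointwise almost sure convergence to convergence in the topology of $H(D)$, I fix a bounded open set $U\subset D$ with $\overline{U}\subset D$ and $\sigma_U:=\inf_{s\in U}\RE(s)>1/2$, and work inside the Bergman space $A^2(U)$. Orthonormality gives
\begin{gather*}
\mathbf{E}\,\bigl\|\zeta_M(s,\mathbb{X}_\alpha)-\zeta_N(s,\mathbb{X}_\alpha)\bigr\|_{A^2(U)}^{2}
= \sum_{n=N+1}^{M}\|(n+\alpha)^{-s}\|_{A^2(U)}^{2}
\ll_{U}\sum_{n=N+1}^{M} n^{-2\sigma_U},
\end{gather*}
so $\{\zeta_N(s,\mathbb{X}_\alpha)\}$ is Cauchy in $L^2(\Omega;A^2(U))$. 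A Hilbert-space valued Menshov--Rademacher theorem applied to the orthogonal random elements $\mathbb{X}_\alpha(n)(n+\alpha)^{-s}\in A^2(U)$ then upgrades this to almost sure convergence in $A^2(U)$, since $\sum_n n^{-2\sigma_U}(\log n)^2<\infty$. The Bergman reproducing kernel makes point evaluation at every $s\in U$ a continuous functional on $A^2(U)$, so almost sure convergence in $A^2(U)$ automatically implies almost sure uniform convergence on every compact subset of $U$. Exhausting $D$ by a countable family of such bounded domains, for instance thickenings of the sets $K_\nu$ from \eqref{eq:04240245}, then delivers almost sure convergence in $H(D)$; the argument for $\zeta(s,\mathbb{Y}_\alpha)$ is identical.

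The main obstacle is this last step, the passage from $L^2(\Omega;A^2(U))$-convergence to almost sure convergence in $A^2(U)$. Trying to apply the scalar Lemma \ref{lem:2.3} coordinatewise in a fixed orthonormal basis of $A^2(U)$ does not give the required summability simultaneously in every coordinate, so one needs either a genuine vector-valued Menshov--Rademacher maximal inequality or a dyadic-block truncation argument on the partial sums. All remaining steps are routine in light of Lemmas \ref{lem:2.1}--\ref{lem:2.3} and standard reproducing-kernel estimates for $A^2(U)$.
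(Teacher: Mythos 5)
Your verification of the hypotheses of Lemma \ref{lem:2.3} for $\{\mathbb{X}_\alpha(n)\}$ and $\{\mathbb{Y}_\alpha(n)\}$ is correct and matches the paper (your remark on the degenerate case $(\alpha,n)=(1,0)$, where $\mathbb{X}_1(0)\equiv 1$ is deterministic, is a detail the paper passes over silently). The genuine gap is exactly the step you flag yourself: the passage from almost sure convergence at each fixed point (or from the $L^2(\Omega;A^2(U))$ Cauchy property) to almost sure convergence in $H(D)$. Lemma \ref{lem:2.3} as stated is a scalar theorem; applying it coordinatewise in an orthonormal basis of $A^2(U)$ does not give the needed summability, as you note, and the ``vector-valued Menshov--Rademacher maximal inequality or dyadic-block truncation'' you appeal to is neither available in the paper nor supplied by you. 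Since this is precisely the step that turns the $S^1$-valued coefficients into an $H(D)$-valued random element, the proposal is incomplete at its crucial point.

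The missing idea, and the paper's route, is much cheaper: no vector-valued probability is needed, because the summands form a Dirichlet series, and for a Dirichlet series convergence at a point already forces uniform convergence in half-planes strictly to the right. Concretely, the paper applies the scalar Lemma \ref{lem:2.3} to obtain an event $\Omega_{\mathrm{c}}$ of full probability on which $\sum_n \mathbb{X}_\alpha(n)(\omega)(n+\alpha)^{-s}$ converges for $\sigma>1/2$ (countably many real abscissae $\sigma_k\downarrow 1/2$ suffice), and then invokes the deterministic fact \cite[Corollary 2.1.3]{Laurincikas1996a} that such a series then converges uniformly for $\sigma\ge 1/2+\delta$ for every $\delta>0$; hence pathwise the limit lies in $H(D)$, and measurability follows because it is a pointwise limit of the random elements of Lemma \ref{lem:2.2}. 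If you insist on the Bergman-space route, you would have to state and prove a Hilbert-space-valued Menshov--Rademacher theorem (it is true, and your orthogonality computation provides its hypotheses, after which the reproducing-kernel argument and exhaustion of $D$ go through), but the classical Dirichlet-series upgrade makes that extra machinery unnecessary.
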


\begin{proof}
By Lemma \ref{lem:2.1}, we can check that the sequence $\{\mathbb{X}_\alpha(n)\}$ satisfies conditions $(\mathrm{i})$ and $(\mathrm{ii})$ of Lemma \ref{lem:2.3}. 
Furthermore, we have
\begin{gather*}
\sum_{n=0}^{\infty} \left|\frac{1}{(n+\alpha)^s}\right|^2 (\log{n})^2
= \sum_{n=0}^{\infty} \frac{(\log{n})^2}{(n+\alpha)^{2\sigma}}
< \infty
\end{gather*}
on the half-plane $\sigma>1/2$. 
Hence Lemma \ref{lem:2.3} yields that the random Dirichlet series $\zeta(s,\mathbb{X}_\alpha)$ converges for $\sigma>1/2$ almost surely. 
In other words, if we define
\begin{gather*}
\Omega_\mathrm{c}
= \{\omega \in \Omega \mid \text{$\zeta_N(s,\mathbb{X}_\alpha)(\omega)$ converges as $N \to\infty$ for $\sigma>1/2$} \}, 
\end{gather*}
then $\mathbf{P}(\Omega_\mathrm{c})=1$. 
For $\delta>0$, we also define 
\begin{gather*}
\Omega_{\mathrm{u},\delta}
= \{\omega \in \Omega \mid \text{$\zeta_N(s,\mathbb{X}_\alpha)(\omega)$ converges uniformly as $N \to\infty$ for $\sigma \geq1/2+\delta$} \}. 
\end{gather*}
Then $\Omega_\mathrm{c} \subset \Omega_{\mathrm{u},\delta}$ is valid for any $\delta>0$. 
Indeed, if a Dirichlet series converges for $\sigma>1/2$, then it converges uniformly for $\sigma \geq 1/2+\delta$. 
See \cite[Corollary 2.1.3]{Laurincikas1996a} for a proof. 
Let $\Omega_\mathrm{u}=\bigcap_{\delta>0} \Omega_{\mathrm{u},\delta}$. 
Then 
\begin{gather*}
\zeta(s,\mathbb{X}_\alpha)(\omega)
= \lim_{N \to\infty} \zeta_N(s,\mathbb{X}_\alpha)(\omega)
\end{gather*}
belongs to the space $H(D)$ for any $\omega \in \Omega_\mathrm{u}$. 
Furthermore, we obtain that $\mathbf{P}(\Omega_\mathrm{u})=1$ since $\mathbf{P}(\Omega_\mathrm{c})=1$ and $\Omega_\mathrm{c} \subset \Omega_{\mathrm{u},\delta}$ for any $\delta>0$. 
Thus $\zeta(s,\mathbb{X}_\alpha)$ defines a map from $\Omega$ to $H(D)$, which is a random element since $\zeta_N(s,\mathbb{X}_\alpha)$ is a random element for any $N \geq0$. 
The proof for $\zeta(s,\mathbb{Y}_\alpha)$ is completely the same as that for $\zeta(s,\mathbb{X}_\alpha)$. 
\end{proof}

\section{Limit theorem for $\zeta(s,\alpha)$ in the space $H(D)$}\label{sec:3}
Let $\alpha$ be an algebraic number with $0<\alpha \leq1$. 
We define 
\begin{gather*}
P_{\alpha,T}(A)
= \frac{1}{T} \meas \left\{\tau \in [0,T] ~\middle|~ \zeta(s+i \tau,\alpha) \in A \right\}, \\
Q_\alpha(A)
= \mathbf{P}(\zeta(s,\mathbb{X}_\alpha) \in A) 
\end{gather*}
for $A \in \mathcal{B}(H(D))$, where $\zeta(s,\mathbb{X}_\alpha)$ is the random Dirichlet series defined in Section \ref{sec:2}. 
Then $P_{\alpha,T}$ and $Q_\alpha$ are probability measures on $(H(D), \mathcal{B}(H(D)))$. 
In this section, we prove the following limit theorem. 

\begin{theorem}\label{thm:3.1}
Let $\alpha$ be an algebraic number with $0<\alpha \leq1$. 
Then the probability measure $P_{\alpha,T}$ converges weakly to $Q_\alpha$ as $T \to\infty$. 
\end{theorem}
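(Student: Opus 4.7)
The plan is Bagchi's classical three-step scheme, routed through the auxiliary torus $\Omega_1$ of Section \ref{sec:2.1} to bypass the failure of $\mathbb{Q}$-linear independence of $\log(n+\alpha)$ in the algebraic case. Introduce the Dirichlet-polynomial analogues
\begin{gather*}
P_{\alpha,T}^N(A) = \frac{1}{T}\meas\{\tau \in [0,T] \mid \zeta_N(s+i\tau,\alpha) \in A\}, \qquad Q_\alpha^N(A) = \mathbf{P}(\zeta_N(s,\mathbb{X}_\alpha) \in A)
\end{gather*}
for $A \in \mathcal{B}(H(D))$. I would first prove $P_{\alpha,T}^N \Rightarrow Q_\alpha^N$ as $T \to \infty$ for each fixed $N$, and then pass $N \to \infty$ via a Slutsky-type approximation step.

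For the polynomial limit theorem, observe from \eqref{eq:04120008} that
\begin{gather*}
(n+\alpha)^{-ih\tau} = \prod_{\lambda \in \Lambda_1} \lambda^{-i\tau\,\ord(n+\alpha,\lambda)},
\end{gather*}
so the finite vector $(\lambda^{-i\tau})_{\lambda \in \Lambda_1^{(N)}}$, indexed by the finite set $\Lambda_1^{(N)} \subset \Lambda_1$ of generators appearing in $(n+\alpha)^h$ for $0 \le n \le N$, determines $\zeta_N(\cdot+i\tau,\alpha)$ through a continuous monomial map. Since $\{\log \lambda\}_{\lambda \in \Lambda_1}$ are $\mathbb{Q}$-linearly independent (by unique factorization of ideals combined with Dirichlet's unit theorem applied to the fundamental units $u_j$), the Kronecker--Weyl theorem delivers equidistribution of this vector to normalized Haar on $(S^1)^{|\Lambda_1^{(N)}|}$. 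Its pushforward under the monomial map coincides with the joint law of $(\mathbb{X}_\alpha(n))_{n \le N}$ by the character computation \eqref{eq:04120245}, and composing with the continuous evaluation map $\psi$ of Lemma \ref{lem:2.2} then yields $P_{\alpha,T}^N \Rightarrow Q_\alpha^N$.

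The passage $N \to \infty$ rests on the two approximations
\begin{gather*}
\lim_{N\to\infty}\limsup_{T\to\infty}\frac{1}{T}\int_0^T d\bigl(\zeta(\cdot+i\tau,\alpha),\zeta_N(\cdot+i\tau,\alpha)\bigr)\,d\tau = 0, \\
\lim_{N\to\infty} \mathbf{E}\bigl[d\bigl(\zeta(\cdot,\mathbb{X}_\alpha),\zeta_N(\cdot,\mathbb{X}_\alpha)\bigr)\bigr] = 0,
\end{gather*}
together with the evident $Q_\alpha^N \Rightarrow Q_\alpha$. By Cauchy's integral formula and the definition of the metric $d$, both approximations reduce to $L^2$ estimates on vertical lines $\sigma > 1/2$: the probabilistic one is exactly the Menshov--Rademacher $L^2$ bound already underlying Proposition \ref{prop:2.4}, while the arithmetic one follows from the classical Montgomery--Vaughan mean-value theorem for general Dirichlet series applied to the tail of \eqref{eq:04010001} after a Mellin-type regularization bringing the series into $D$. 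Feeding all of this into Theorem 4.2 of Billingsley closes the weak-convergence argument.

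The main obstacle is the moment-matching step inside the polynomial limit theorem: one must identify two a priori different measures on $(S^1)^{N+1}$, one arising from time averages of $((n+\alpha)^{-i\tau})_n$ and the other from the law of $(\mathbb{X}_\alpha(n))_n$. This is precisely where the construction of $\mathbb{X}_\alpha$ in Section \ref{sec:2.1} earns its keep: by routing the randomness through $\Omega_1 = \prod_{\lambda \in \Lambda_1} S^1$ rather than directly through the coordinates $n+\alpha$, the arithmetic dependencies among the $n+\alpha$ become a deterministic monomial map and all probabilistic independence is pushed to the coordinates $\mathcal{X}(\lambda)$, whose $\mathbb{Q}$-linear independence of logarithms is the clean hypothesis that feeds Kronecker--Weyl. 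Once this identification is in place, everything else is routine.
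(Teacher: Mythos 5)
Your proposal follows the same overall Bagchi architecture as the paper (a finite-dimensional limit theorem for the random model, approximation by Dirichlet polynomials on average, then a gluing step), but both pillars are implemented differently. For the finite-dimensional step the paper never invokes Kronecker--Weyl: Proposition \ref{prop:3.3} computes the Fourier transform of $\mu_{\alpha,T}$ on the torus of coordinates $((n+\alpha)^{-i\tau})_{n}$ by an elementary oscillatory-integral calculation and matches it against the moments of $(\mathbb{X}_\alpha(n))_n$ supplied by Lemma \ref{lem:2.1}, so no linear-independence statement ever has to be verified. Your detour through the $\Lambda_1$-torus is viable (unique factorization of ideals plus multiplicative independence of the positive fundamental units does give the $\mathbb{Q}$-linear independence of $\{\log\lambda\}$), but it proves nothing extra and needs the repair below. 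For the approximation step the paper uses smoothed sums $Z_N$ built from a compactly supported $\phi$ and its Mellin transform (Lemma \ref{lem:3.4}), first-moment bounds via the approximate functional equation and Hilbert's inequality (Lemma \ref{lem:3.5}, Proposition \ref{prop:3.6}), and the bounded-Lipschitz Portmanteau theorem (Lemma \ref{lem:3.7}); you propose sharp partial sums, mean-square bounds, and Billingsley's approximation theorem, which is the classical Laurin\v{c}ikas-style variant and works, but your phrase ``Montgomery--Vaughan applied to the tail of \eqref{eq:04010001} after a Mellin-type regularization'' compresses exactly the content of Lemmas \ref{lem:3.4}--\ref{lem:3.5}: the series \eqref{eq:04010001} diverges on $D$, so the ``tail'' only acquires meaning through an approximate functional equation (with the range near the pole treated separately, as in \eqref{eq:04160057}) or through precisely such a smoothing, and this is where the real work lies.

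The one step that is wrong as written is the monomial-map claim. By \eqref{eq:04120008} the map in the coordinates $(\lambda^{-i\tau})_{\lambda\in\Lambda_1^{(N)}}$ produces $\prod_\lambda\lambda^{-i\tau\ord(n+\alpha,\lambda)}=(n+\alpha)^{-ih\tau}$, so the vector $(\lambda^{-i\tau})_\lambda$ does \emph{not} determine $\zeta_N(s+i\tau,\alpha)$; it only determines the relevant unimodular coordinates at time $h\tau$, i.e. $(n+\alpha)^{-i\tau}$ up to $h$-th roots of unity. What Kronecker--Weyl plus the continuous mapping theorem gives you directly is the limiting law of $\tau\mapsto((n+\alpha)^{-ih\tau})_{n\le N}$, which by the construction of $\mathbb{X}_\alpha$ is indeed the law of $(\mathbb{X}_\alpha(n))_{n\le N}$. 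The statement you actually need then follows from the time-rescaling identity
\begin{gather*}
\frac{1}{T}\meas\left\{\tau\in[0,T] ~\middle|~ \left((n+\alpha)^{-i\tau}\right)_{n\le N}\in A\right\}
= \frac{1}{T/h}\meas\left\{\tau'\in[0,T/h] ~\middle|~ \left((n+\alpha)^{-ih\tau'}\right)_{n\le N}\in A\right\},
\end{gather*}
so both families have the same weak limit as $T\to\infty$; alternatively, argue directly on the closure of the one-parameter subgroup $\tau\mapsto((n+\alpha)^{-i\tau})_n$, which is exactly what the paper's Fourier computation in Proposition \ref{prop:3.3} accomplishes. With that sentence corrected, and with the mean-square approximation of $\zeta(s,\alpha)$ by $\zeta_N(s,\alpha)$ in the strip actually carried out, your argument goes through.
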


This is a generalization of Bagchi's limit theorem of \cite{Bagchi1981}, which was proved for the Riemann zeta-function $\zeta(s)$. 
The proof of Theorem \ref{thm:3.1} is largely based on Bagchi's method, while we adopt new ideas from Kowalski \cite{Kowalski2017, Kowalski2021} in part.

\subsection{Fourier analysis on a torus}\label{sec:3.1}
Let $\mathcal{T}^k$ be the $k$-dimensional torus given by
\begin{gather*}
\mathcal{T}^k
= \prod_{j=1}^{k} S^1. 
\end{gather*}
Denote by $\mathcal{B}(\mathcal{T}^k)$ the Borel algebra of $\mathcal{T}^k$ with the product topology. 
Let $\mu$ be any probability measure on $(\mathcal{T}^k, \mathcal{B}(\mathcal{T}^k))$, and define its Fourier transform as 
\begin{gather*}
g(\underline{m})
= \int_{\mathcal{T}^k} \prod_{j=1}^{k} \gamma_j^{m_j} \,d \mu (\underline{\gamma}) 
\end{gather*}
for $\underline{m}=(m_1,\ldots,m_k) \in \mathbb{Z}^k$. 
Then the following result holds for the torus $\mathcal{T}^k$. 

\begin{lemma}\label{lem:3.2}
Let $\{\mu_n\}$ be a sequence of probability measures on $(\mathcal{T}^k, \mathcal{B}(\mathcal{T}^k))$ whose Fourier transforms are denoted by $g_n(\underline{m})$. 
If the limit 
\begin{gather*}
g(\underline{m})
= \lim_{n \to\infty} g_n(\underline{m})
\end{gather*}
exists for any $\underline{m} \in \mathbb{Z}^k$, then there exists a probability measure $\nu$ on $(\mathcal{T}^k, \mathcal{B}(\mathcal{T}^k))$ such that the following properties are satisfied. 
\begin{itemize}
\item[$(\mathrm{i})$]
The probability measure $\mu_n$ converges weakly to $\nu$ as $n \to\infty$. 
\item[$(\mathrm{ii})$]
The Fourier transform of $\nu$ is equal to $g(\underline{m})$. 
\end{itemize}
\end{lemma}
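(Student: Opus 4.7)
The plan is to prove this as the Lévy continuity theorem for the compact abelian group $\mathcal{T}^k$. The argument factors into three ingredients: sequential compactness of the space of Borel probability measures on $\mathcal{T}^k$, a Stone--Weierstrass uniqueness statement that the Fourier transform determines the measure, and a standard subsequence argument to promote weak subsequential convergence to weak convergence of the whole sequence.

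First I would invoke compactness of $\mathcal{T}^k$. Because $\mathcal{T}^k$ is a compact metric space, the family of all Borel probability measures on it is automatically tight, so Prokhorov's theorem gives that every subsequence of $\{\mu_n\}$ admits a further subsequence converging weakly to some Borel probability measure on $\mathcal{T}^k$. Moreover the weak topology on this family is metrizable (e.g.\ by the Lévy--Prokhorov metric), a fact I will need later to go from ``every subsequential limit equals $\nu$'' back to ``$\mu_n$ itself converges to $\nu$''.

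Next I would establish uniqueness. The characters $\chi_{\underline{m}}(\underline{\gamma}) = \prod_{j=1}^{k} \gamma_j^{m_j}$, indexed by $\underline{m} \in \mathbb{Z}^k$, form a family of continuous functions on $\mathcal{T}^k$ whose $\mathbb{C}$-linear span is a subalgebra of $C(\mathcal{T}^k, \mathbb{C})$ containing the constants, separating points of $\mathcal{T}^k$, and closed under complex conjugation (since $\overline{\chi_{\underline{m}}} = \chi_{-\underline{m}}$). By the Stone--Weierstrass theorem this span is uniformly dense in $C(\mathcal{T}^k, \mathbb{C})$, so any two Borel probability measures on $\mathcal{T}^k$ with the same Fourier transform must assign the same integral to every continuous function, and hence must coincide as Borel measures.

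Finally I would combine these steps. Let $\mu_{n_j} \to \nu$ be any weakly convergent subsequence guaranteed by the first step. Since each character $\chi_{\underline{m}}$ is continuous and bounded, weak convergence gives
\begin{gather*}
\int_{\mathcal{T}^k} \chi_{\underline{m}} \, d\nu
= \lim_{j \to \infty} g_{n_j}(\underline{m})
= g(\underline{m})
\end{gather*}
for every $\underline{m} \in \mathbb{Z}^k$, proving $(\mathrm{ii})$ for this subsequential limit. By the uniqueness step every such subsequential limit is the same measure $\nu$, so by metrizability of the weak topology the entire sequence $\mu_n$ converges weakly to $\nu$, giving $(\mathrm{i})$. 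I do not expect a genuine obstacle here; the one point requiring a little care is ensuring that the Fourier transform $g$ arising in the hypothesis is not assumed a priori to be the transform of any measure, which is exactly why the compactness-plus-uniqueness route is cleaner than trying to construct $\nu$ directly from $g$.
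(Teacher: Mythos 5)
Your proof is correct, but it takes a different route from the paper: the paper does not argue this lemma directly at all, it simply quotes a general continuity theorem for probability measures on locally compact Abelian groups (Heyer, Theorem 1.4.2; see also Laurin\v{c}ikas, Theorem 1.3.19), of which the statement is the special case $G=\mathcal{T}^k$. Your argument instead proves the compact case from scratch: automatic tightness on the compact metric space $\mathcal{T}^k$ plus Prokhorov's theorem gives subsequential weak limits; Stone--Weierstrass applied to the span of the characters $\chi_{\underline{m}}$ shows a Borel probability measure on $\mathcal{T}^k$ is determined by its Fourier transform; and the standard subsequence argument (using metrizability of the weak topology, or equivalently arguing scalar-sequence-wise with $\int F\,d\mu_n$ for each continuous $F$) upgrades this to convergence of the whole sequence. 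Each step is sound, including the passage from ``equal integrals of continuous functions'' to ``equal measures,'' which is the Riesz representation theorem on a compact metric space. What the two approaches buy is complementary: the paper's citation is shorter and covers every locally compact Abelian group (where one must additionally assume continuity of the limit $g$ at the identity of the dual), while your proof is self-contained and makes transparent exactly why no such continuity hypothesis appears in the lemma --- the dual of the compact group $\mathcal{T}^k$ is the discrete group $\mathbb{Z}^k$ and tightness is free, so pointwise convergence of the transforms already suffices.
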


\begin{proof}
This is a special case of a more general result \cite[Theorem 1.4.2]{Heyer1977} proved for every locally compact Abelian group. 
See also \cite[Theorem 1.3.19]{Laurincikas1996a}. 
\end{proof}

Let $\alpha$ be an algebraic number with $0<\alpha \leq1$. 
Define the probability measures $\mu_{\alpha,T}$ and $\nu_\alpha$ on $(\mathcal{T}^k, \mathcal{B}(\mathcal{T}^k))$ by letting
\begin{gather*}
\mu_{\alpha,T}(A)
= \frac{1}{T} \meas \left\{\tau \in [0,T] ~\middle|~ \left((n_1+\alpha)^{-i \tau},\ldots,(n_k+\alpha)^{-i \tau}\right) \in A \right\}, \\
\nu_\alpha(A)
= \mathbf{P}\left( \left(\mathbb{X}_\alpha(n_1),\ldots,\mathbb{X}_\alpha(n_k)\right) \in A \right) 
\end{gather*}
for $A \in \mathcal{B}(\mathcal{T}^k)$, where $n_1,\ldots,n_k$ are non-negative integers. 
We begin by the proof of the following auxiliary result. 

\begin{proposition}\label{prop:3.3}
Let $\alpha$ be an algebraic number with $0<\alpha \leq1$. 
Then the probability measure $\mu_{\alpha,T}$ converges weakly to $\nu_\alpha$ as $T \to\infty$. 
\end{proposition}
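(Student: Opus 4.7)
The plan is to verify the hypothesis of Lemma \ref{lem:3.2} by computing the Fourier transforms of $\mu_{\alpha,T}$ and $\nu_\alpha$ and establishing pointwise convergence of the former to the latter.

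First I would compute the Fourier transform of $\mu_{\alpha,T}$ at $\underline{m} = (m_1, \ldots, m_k) \in \mathbb{Z}^k$ by a direct change-of-variables. Since the $\tau$-averaging becomes integration against the characters of $\mathcal{T}^k$, we obtain
\begin{gather*}
g_T(\underline{m})
= \frac{1}{T} \int_0^T \prod_{j=1}^{k} (n_j+\alpha)^{-i m_j \tau} \,d\tau
= \frac{1}{T} \int_0^T e^{-i \tau L(\underline{m})} \,d\tau,
\end{gather*}
where $L(\underline{m}) = \sum_{j=1}^{k} m_j \log(n_j + \alpha)$. On the other hand, by the first assertion of Lemma \ref{lem:2.1}, the Fourier transform $g(\underline{m})$ of $\nu_\alpha$ equals $1$ if $\prod_{j=1}^k (n_j+\alpha)^{m_j} = 1$ and equals $0$ otherwise.

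Next I would exploit the key dichotomy: since each $n_j + \alpha$ is a positive real number, the condition $\prod_j (n_j+\alpha)^{m_j} = 1$ is equivalent to $L(\underline{m}) = 0$. When $L(\underline{m}) = 0$, the integrand defining $g_T(\underline{m})$ is identically $1$, so $g_T(\underline{m}) = 1 = g(\underline{m})$ for every $T$. When $L(\underline{m}) \neq 0$, evaluating the elementary integral yields
\begin{gather*}
g_T(\underline{m})
= \frac{e^{-i T L(\underline{m})} - 1}{-i T L(\underline{m})}
= O\!\left(\frac{1}{T |L(\underline{m})|}\right),
\end{gather*}
which tends to $0 = g(\underline{m})$ as $T \to \infty$. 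Hence $\lim_{T \to \infty} g_T(\underline{m}) = g(\underline{m})$ for every $\underline{m}$.

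Finally I would invoke Lemma \ref{lem:3.2} (for an arbitrary sequence $T_n \to \infty$) to conclude that $\mu_{\alpha,T}$ converges weakly to some probability measure $\nu$ on $(\mathcal{T}^k, \mathcal{B}(\mathcal{T}^k))$ whose Fourier transform coincides with $g$. Since Fourier transforms determine finite Borel measures on the compact abelian group $\mathcal{T}^k$ uniquely, and $g$ is also the Fourier transform of $\nu_\alpha$, this forces $\nu = \nu_\alpha$. The proof is essentially a Weyl-type equidistribution argument, and the only real subtlety — conceptual rather than technical — lies in recognizing that the class-field-theoretic construction of $\mathbb{X}_\alpha(n)$ from Section \ref{sec:2.1}, via the first part of Lemma \ref{lem:2.1}, produces exactly the Fourier transform required to match the oscillatory integral; crucially, this works without any hypothesis of $\mathbb{Q}$-linear independence among the $\log(n_j+\alpha)$, which is precisely why the random model $\mathbb{X}_\alpha$ is the right one for algebraic irrational $\alpha$.
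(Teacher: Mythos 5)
Your proposal is correct and follows essentially the same route as the paper: compute the Fourier transform of $\mu_{\alpha,T}$ as an oscillatory integral, split according to whether $(n_1+\alpha)^{m_1}\cdots(n_k+\alpha)^{m_k}=1$ (equivalently $L(\underline{m})=0$), match the limit with the Fourier transform of $\nu_\alpha$ given by Lemma \ref{lem:2.1}, and conclude via Lemma \ref{lem:3.2} together with uniqueness of the Fourier transform on $\mathcal{T}^k$. The paper leaves the uniqueness step and the passage from the continuous parameter $T$ to sequences implicit, but these are exactly the minor points you make explicit, so there is no substantive difference.
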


\begin{proof}
Denote by $g_{\alpha,T}(\underline{m})$ the Fourier transform of $\mu_{\alpha,T}$. 
Then it is calculated as 
\begin{align*}
g_{\alpha,T}(\underline{m})
&= \int_{\mathcal{T}^k} \prod_{j=1}^{k} \gamma_j^{m_j} \,d \mu_{\alpha,T} (\underline{\gamma}) \\
&= \frac{1}{T} \int_{0}^{T} \left\{(n_1+\alpha)^{m_1}\cdots(n_k+\alpha)^{m_k}\right\}^{-i \tau} \,d \tau
\end{align*}
for any $\underline{m}=(m_1,\ldots,m_k) \in \mathbb{Z}^k$. 
If $(n_1+\alpha)^{m_1}\cdots(m_k+\alpha)^{m_k}=1$, then we have
\begin{gather*}
g_{\alpha,T}(\underline{m})
= \frac{1}{T} \int_{0}^{T} \,d \tau
= 1
\end{gather*}
for any $T>0$. 
Otherwise, we obtain that
\begin{gather*}
g_{\alpha,T}(\underline{m})
= \frac{1}{-i \Delta} \frac{\left\{(n_1+\alpha)^{m_1}\cdots(n_k+\alpha)^{m_k}\right\}^{-i T}-1}{T}, 
\end{gather*}
where $\Delta=m_1 \log(n_1+\alpha)+\cdots+m_k \log(n_k+\alpha) \neq 0$. 
As a result, we derive the limit formula
\begin{gather*}
\lim_{T \to\infty} g_{\alpha,T}(\underline{m})
= g_\alpha(\underline{m})
:=
\begin{cases}
1
& \text{if $(n_1+\alpha)^{m_1}\cdots(m_k+\alpha)^{m_k}=1$},
\\
0
& \text{otherwise}
\end{cases}
\end{gather*}
for any $\underline{m}=(m_1,\ldots,m_k) \in \mathbb{Z}^k$. 
Hence, by Lemma \ref{lem:3.2}, the probability measure $\mu_{\alpha,T}$ converges weakly to some probability measure on $(\mathcal{T}^k, \mathcal{B}(\mathcal{T}^k))$ whose Fourier transform is equal to $g_\alpha(\underline{m})$. 
Furthermore, the Fourier transform of $\nu_\alpha$ is  
\begin{gather*}
\int_{\mathcal{T}^k} \prod_{j=1}^{k} \gamma_j^{m_j} \,d \nu_\alpha (\underline{\gamma})
= \mathbf{E} \left[ \mathbb{X}_\alpha(n_1)^{m_1} \cdots \mathbb{X}_\alpha(n_k)^{m_k} \right]
= g_\alpha(\underline{m})
\end{gather*}
for any $\underline{m}=(m_1,\ldots,m_k) \in \mathbb{Z}^k$ by Lemma \ref{lem:2.1}. 
Therefore, the limit measure of $\mu_{\alpha, T}$ is equal to $\nu_\alpha$, and the proof is completed.  
\end{proof}

\subsection{Approximation by smoothed partial sums}\label{sec:3.2}
From now, we fix an infinitely differentiable function $\phi:[0,\infty) \to [0,1]$ with compact support such that $\phi(0)=1$. 
The Mellin transform of $\phi$ is defined by
\begin{gather*}
\widehat{\phi}(w)
= \int_{0}^{\infty} \phi(x) x^w \,\frac{dx}{x}
\end{gather*}
for $\RE(w)>0$. 
Then we collect standard properties of $\widehat{\phi}(w)$ as follows. 
For proofs, see \cite[Proposition A.3.1]{Kowalski2021}. 
Firstly, $\widehat{\phi}(w)$ extends to a meromorphic function on the half-plane $\RE(w)>-1$ only with a simple pole at $w=0$ with residue $1$. 
Secondly, $\widehat{\phi}(w)$ has rapid decay on the strip $a \leq \RE(w) \leq b$ with $a>-1$ in the following sense. 
Let $k \geq1$ and $b>a>-1$. 
Then we have 
\begin{gather}\label{eq:04150149}
|\widehat{\phi}(w)| 
\ll (|v|+1)^{-k}
\end{gather}
for any $w=u+iv \in \mathbb{C}$ with $a \leq u \leq b$ and $|v| \geq1$, where the implied constant depends only on $a,b,k$. 
Lastly, the inversion formula
\begin{gather}\label{eq:04151549}
\phi(x)
= \frac{1}{2\pi i} \int_{c-i \infty}^{c+i \infty} \widehat{\phi}(w) x^{-w} \,dw
\end{gather}
holds for any $c>0$. 
Let $\alpha$ be an algebraic number with $0<\alpha \leq1$. 
Then we define 
\begin{align*}
Z_N(s,\alpha)
&= \sum_{n=0}^{\infty} (n+\alpha)^{-s} \phi \left(\frac{n+\alpha}{N}\right), \\
Z_N(s,\mathbb{X}_\alpha)
&= \sum_{n=0}^{\infty} \frac{\mathbb{X}_\alpha(n)}{(n+\alpha)^s} \phi \left(\frac{n+\alpha}{N}\right) 
\end{align*}
for $N \geq1$. 
Since $\phi((n+\alpha)/N)=0$ for sufficiently large $n$, we see that $Z_N(s,\alpha)$ is a function belonging to the space $H(D)$, and that $Z_N(s,\mathbb{X}_\alpha)$ is a random element valued on $H(D)$. 
Then we prove two preliminary lemmas. 

\begin{lemma}\label{lem:3.4}
Let $\alpha$ be an algebraic number with $0<\alpha \leq1$. 
Let $s=\sigma+it$ be a complex number with $1/2<\sigma \leq1$, and suppose $0<\delta<\sigma-1/2$. 
Then we have 
\begin{gather*}
\zeta(s,\alpha)
= Z_N(s,\alpha)
- \frac{1}{2\pi i} \int_{-\delta-i \infty}^{-\delta+i \infty} \zeta(s+w,\alpha) \widehat{\phi}(w) N^w \,dw
- \widehat{\phi}(1-s) N^{1-s} 
\end{gather*}
for any $N \geq1$. 
Furthermore, we have
\begin{gather*}
\zeta(s,\mathbb{X}_\alpha)
= Z_N(s,\mathbb{X}_\alpha)
- \frac{1}{2\pi i} \int_{-\delta-i \infty}^{-\delta+i \infty} \zeta(s+w,\mathbb{X}_\alpha) \widehat{\phi}(w) N^w \,dw
\end{gather*}
almost surely for any $N \geq1$. 
\end{lemma}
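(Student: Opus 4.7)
The plan is to derive both identities via the Mellin inversion formula for $\phi$ followed by a contour shift. I begin with the deterministic case. Using the inversion formula \eqref{eq:04151549} with $x=(n+\alpha)/N$, one may write each smoothing factor as
\[
\phi\!\left(\frac{n+\alpha}{N}\right)
= \frac{1}{2\pi i} \int_{c-i\infty}^{c+i\infty} \widehat{\phi}(w) \left(\frac{n+\alpha}{N}\right)^{-w} dw
\]
for any $c>0$. Choosing $c$ large enough that $\sigma+c>1$, the rapid decay estimate \eqref{eq:04150149} combined with absolute convergence of $\sum (n+\alpha)^{-\sigma-c}$ permits me to interchange sum and integral by Fubini's theorem, which yields the representation
\[
Z_N(s,\alpha) = \frac{1}{2\pi i} \int_{c-i\infty}^{c+i\infty} \zeta(s+w,\alpha)\, \widehat{\phi}(w)\, N^w\, dw.
\]

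Next I shift the contour to $\RE(w)=-\delta$. The integrand has exactly two poles in the strip $-\delta \leq \RE(w) \leq c$: a simple pole at $w=0$ coming from $\widehat{\phi}(w)$ with residue $\zeta(s,\alpha)$, and a simple pole at $w=1-s$ coming from $\zeta(s+w,\alpha)$ with residue $\widehat{\phi}(1-s)N^{1-s}$; the hypothesis $0<\delta<\sigma-1/2$ guarantees these are the only singularities encountered. The rapid decay of $\widehat{\phi}$ recorded in \eqref{eq:04150149}, together with the standard polynomial bound for $\zeta(\sigma_0+iv,\alpha)$ on any vertical line with $\sigma_0>1/2$, ensures that the horizontal connecting segments vanish in the limit. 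Collecting the residues and rearranging yields the first identity.

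The random version is obtained by repeating the argument on each sample path $\omega$ lying in the full-probability event $\Omega_\mathrm{u}$ introduced in the proof of Proposition \ref{prop:2.4}. The Mellin representation of $Z_N(s,\mathbb{X}_\alpha)$ is justified by Fubini exactly as above, since $|\mathbb{X}_\alpha(n)|=1$ and so the series on the line $\RE(w)=c$ is dominated termwise by $\sum(n+\alpha)^{-\sigma-c}$. In contrast to the deterministic case, Proposition \ref{prop:2.4} guarantees that $\zeta(s,\mathbb{X}_\alpha)$ is almost surely analytic on all of $D$, so no pole at $w=1-s$ is present; the only residue contribution comes from $w=0$ with value $\zeta(s,\mathbb{X}_\alpha)$, which produces the second formula.

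The main obstacle is justifying the contour shift in the random case, because the argument requires pointwise growth control of $\zeta(s+w,\mathbb{X}_\alpha)(\omega)$ along vertical lines inside $D$ in order to push the horizontal connecting segments to infinity. My plan is to establish, for each fixed $\sigma_0>1/2$, a bound of the form
\[
|\zeta(\sigma_0+iv,\mathbb{X}_\alpha)(\omega)| \ll_{\omega,\epsilon}(1+|v|)^{1/2+\epsilon}
\]
almost surely, by combining the orthogonality relations from Lemma \ref{lem:2.1} (which yield a second-moment estimate on vertical segments) with a standard Borel--Cantelli argument. This bound is comfortably absorbed by the super-polynomial decay of $\widehat{\phi}$, yielding the claimed identity outside a null set depending only on $s$ and $N$; a continuity argument together with countability of a dense subset of $D$ upgrades the identity to hold almost surely for all $s$ in the stated range and all $N\geq 1$ simultaneously.
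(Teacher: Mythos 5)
Your proposal is correct and, in its essentials, follows the same route as the paper: Mellin inversion of the smoothing factor, Fubini on the line $\RE(w)=c$ with $\sigma+c>1$, then a shift to $\RE(w)=-\delta$ collecting the simple poles at $w=0$ and $w=1-s$, the latter being absent in the random case because $\zeta(s+w,\mathbb{X}_\alpha)(\omega)$ stays holomorphic for $\omega$ in the event $\Omega_\mathrm{u}$ of Proposition \ref{prop:2.4}. Where you diverge is in justifying the contour shift for the random series: you propose an almost sure growth bound $\ll_{\omega,\epsilon}(1+|v|)^{1/2+\epsilon}$ on fixed vertical lines via second moments and Borel--Cantelli, and then a countability/continuity upgrade to remove the dependence of the null set on $s$ and $N$. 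The paper needs none of this: it argues pathwise on the single full-measure event $\Omega_\mathrm{u}$, on which the Dirichlet series converges (uniformly) on each half-plane $\sigma\geq 1/2+\delta$; by Abel summation against the bounded partial sums at a real point of convergence, such a series is $O_\omega(1+|t|)$ uniformly on these half-planes, which against the rapid decay \eqref{eq:04150149} of $\widehat{\phi}$ kills the horizontal segments and gives absolute convergence on $\RE(w)=-\delta$ deterministically, and hence the identity holds on one probability-one event simultaneously for all $s$, $\delta$, $N$. Your probabilistic detour can be made rigorous, but as sketched it is both heavier than necessary and slightly under-specified: the horizontal connecting segments require control uniform in the real part over the whole strip $-\delta\leq\RE(w)\leq c$ (obtainable, e.g., by a subharmonicity/area-integral step), not just on countably many vertical lines, so you should either add that step or simply invoke the pathwise bound above.
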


\begin{proof}
Let $c$ be a positive real number with $\sigma+c>1$. 
By inversion formula \eqref{eq:04151549}, the function $Z_N(s,\alpha)$ is represented as
\begin{align*}
Z_N(s,\alpha)
&= \sum_{n=0}^{\infty} (n+\alpha)^{-s} \frac{1}{2\pi i} \int_{c-i \infty}^{c+i \infty} \widehat{\phi}(w) \left(\frac{n+\alpha}{N}\right)^{-w} \,dw \\
&= \frac{1}{2\pi i} \sum_{n=0}^{\infty} \int_{c-i \infty}^{c+i \infty} (n+\alpha)^{-(s+w)} \widehat{\phi}(w) N^w \,dw. 
\end{align*}
By Fubini's theorem, we obtain
\begin{gather*}
Z_N(s,\alpha)
= \frac{1}{2\pi i} \int_{c-i \infty}^{c+i \infty} \zeta(s+w,\alpha) \widehat{\phi}(w) N^w \,dw
\end{gather*}
since $\RE(s+w)>1$ for $\RE(w)=c$. 
Then we shift the contour to $\RE(w)=-\delta$ with $0<\delta<\sigma-1/2$. 
Note that we come across poles of $\zeta(s+w,\alpha) \widehat{\phi}(w) N^w$ only at $w=0, 1-s$, which are simple. 
Calculating the residues, we derive
\begin{gather*}
Z_N(s,\alpha)
= \frac{1}{2\pi i} \int_{-\delta-i \infty}^{-\delta+i \infty} \zeta(s+w,\alpha) \widehat{\phi}(w) N^w \,dw
+ \zeta(s,\alpha) 
+ \widehat{\phi}(1-s) N^{1-s}. 
\end{gather*}
This yields the desired formula of $\zeta(s,\alpha)$. 
As for $Z_N(s,\mathbb{X}_\alpha)$, we have 
\begin{gather*}
Z_N(s,\mathbb{X}_\alpha)(\omega)
= \frac{1}{2\pi i} \int_{c-i \infty}^{c+i \infty} \zeta(s+w,\mathbb{X}_\alpha)(\omega) \widehat{\phi}(w) N^w \,dw
\end{gather*}
for $\omega \in \Omega_\mathrm{u}$ in a similar way, where $\Omega_\mathrm{u}$ is the same as in the proof of Proposition \ref{prop:2.4}. 
Note that $\zeta(s+w,\mathbb{X}_\alpha)(\omega)$ remains holomorphic for any $\omega \in \Omega_\mathrm{u}$, while shifting the contour to $\RE(w)=-\delta$ with $0<\delta<\sigma-1/2$. 
Therefore, the formula
\begin{gather*}
Z_N(s,\mathbb{X}_\alpha)(\omega)
= \frac{1}{2\pi i} \int_{-\delta-i \infty}^{-\delta+i \infty} \zeta(s+w,\mathbb{X}_\alpha)(\omega) \widehat{\phi}(w) N^w \,dw
+ \zeta(s,\mathbb{X}_\alpha)(\omega) 
\end{gather*}
holds for $\omega \in \Omega_\mathrm{u}$. 
Since $\mathbf{P}(\Omega_\mathrm{u})=1$, we obtain the conclusion. 
\end{proof}

\begin{lemma}\label{lem:3.5}
Let $\alpha$ be an algebraic number with $0<\alpha \leq1$. 
Let $s=\sigma+it$ be a complex number with $1/2<\sigma<1$. 
Then we have
\begin{gather*}
\frac{1}{T} \int_{0}^{T} |\zeta(s+i \tau, \alpha)| \,d \tau
\ll \frac{1}{\sqrt{2\sigma-1}} \left(1+\frac{|t|}{T}\right)
+ \frac{1}{1-\sigma} \left(1+\frac{|t|}{T}\right)
\end{gather*}
for any $T \geq 3$. 
Furthermore, we have 
\begin{gather*}
\mathbf{E} \left[|\zeta(s,\mathbb{X}_\alpha)|\right]
\ll \frac{1}{\sqrt{2\sigma-1}}. 
\end{gather*}
Here, the implied constants depend only on $\alpha$. 
\end{lemma}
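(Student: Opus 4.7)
Both bounds will be derived via the Cauchy--Schwarz inequality from suitable second moment estimates.

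The expectation bound is the easier of the two. Lemma~\ref{lem:2.1} applied with $(m_1,m_2)=(1,-1)$ yields the orthogonality $\mathbf{E}[\mathbb{X}_\alpha(m)\overline{\mathbb{X}_\alpha(n)}] = \delta_{m,n}$ (recall that $\overline{\mathbb{X}_\alpha(n)} = \mathbb{X}_\alpha(n)^{-1}$ since $\mathbb{X}_\alpha(n) \in S^1$). Combining this with the $L^2$-convergence of the truncations $\zeta_N(s,\mathbb{X}_\alpha) \to \zeta(s,\mathbb{X}_\alpha)$, which is implicit in the proof of Proposition~\ref{prop:2.4} via the Cauchy property of the partial sums under $\mathbf{E}[\,|\cdot|^2\,]$, Parseval would give
\[
\mathbf{E}\bigl[|\zeta(s,\mathbb{X}_\alpha)|^2\bigr] = \sum_{n=0}^{\infty}(n+\alpha)^{-2\sigma} = \zeta(2\sigma,\alpha) \ll \frac{1}{2\sigma-1},
\]
and the desired bound $\mathbf{E}[|\zeta(s,\mathbb{X}_\alpha)|] \leq \sqrt{\mathbf{E}[|\zeta(s,\mathbb{X}_\alpha)|^2]} \ll 1/\sqrt{2\sigma-1}$ follows immediately.

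For the integral bound, the plan is to first make the substitution $u = t+\tau$ so that $\frac{1}{T}\int_0^T|\zeta(s+i\tau,\alpha)|\,d\tau = \frac{1}{T}\int_t^{T+t}|\zeta(\sigma+iu,\alpha)|\,du$. Using the conjugation symmetry $|\zeta(\bar{s},\alpha)| = |\zeta(s,\alpha)|$, which is valid since $\alpha\in\mathbb{R}$, this quantity is bounded by a constant multiple of $\frac{1}{T}\int_0^{T+|t|}|\zeta(\sigma+iu,\alpha)|\,du$. Cauchy--Schwarz then reduces the problem to the uniform mean square estimate
\[
\int_0^M|\zeta(\sigma+iu,\alpha)|^2\,du \ll \frac{M}{2\sigma-1} + \frac{1}{1-\sigma} \qquad (M\geq 1,\ 1/2<\sigma<1).
\]
Granting this, dividing by $T$ and taking square roots, with the use of $\sqrt{a+b}\leq\sqrt{a}+\sqrt{b}$, $\sqrt{x}\leq x$ for $x\geq 1$ (applied with $x=1+|t|/T$), and $1/\sqrt{T(1-\sigma)}\leq 1/(1-\sigma)$ (which holds because $T \geq 3 > 1-\sigma$), would recover the claimed inequality.

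The hard part will be the uniform mean square estimate, because the two edges $\sigma\to 1/2^+$ and $\sigma\to 1^-$ must be controlled simultaneously. The plan is to insert a version of the Euler--Maclaurin approximation
\[
\zeta(s,\alpha) = \sum_{n=0}^{\lfloor M\rfloor}(n+\alpha)^{-s} + \frac{(M+\alpha)^{1-s}}{s-1} + O_\alpha\bigl(M^{-\sigma}|s|\bigr)
\]
and to bound the three resulting contributions to $\int_0^M|\zeta|^2\,du$ separately: the Dirichlet polynomial gives $\ll M\zeta(2\sigma,\alpha) \ll M/(2\sigma-1)$ by the Montgomery--Vaughan mean value inequality; the pole term, after integrating $|(M+\alpha)^{1-s-iu}/(s-1+iu)|^2$ against $du$, contributes $\ll (M+\alpha)^{2-2\sigma}/(1-\sigma)$, which is absorbed into the dominant bound for an appropriate choice of $M$; the error term is of lower order. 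The two source terms $1/(2\sigma-1)$ and $1/(1-\sigma)$ in the final bound reflect the two singular features of $\zeta(s,\alpha)$ in the strip, namely the abscissa of absolute convergence of the defining Dirichlet series at $\sigma = 1/2$ and the simple pole at $s=1$, respectively.
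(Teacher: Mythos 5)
Your expectation bound is exactly the paper's argument (Cauchy--Schwarz plus the orthogonality from Lemma \ref{lem:2.1}), and your reduction of the $\tau$-integral to a mean value over $[0,T+|t|]$, followed by Cauchy--Schwarz and the elementary manipulations with $\sqrt{a+b}\le\sqrt a+\sqrt b$ and $T\ge 3$, is arithmetically sound. The gap is in the step you yourself flag as ``the hard part'': the uniform second moment bound $\int_0^M|\zeta(\sigma+iu,\alpha)|^2\,du\ll M/(2\sigma-1)+1/(1-\sigma)$. With the Euler--Maclaurin formula truncated at length $M$ (equal to the length of integration) the error term is $O_\alpha(M^{-\sigma}|s|)$, and since $|s|\asymp 1+u$ can be as large as $M$ on the range of integration, its contribution to the second moment is $\ll M^{-2\sigma}\int_0^M(1+u^2)\,du\asymp M^{3-2\sigma}$. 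This is \emph{not} of lower order: for fixed $\sigma=3/4$ it is $M^{3/2}$, while your target is $\asymp M$. (Raising the truncation length to kill the error then inflates the off-diagonal term $\sum_n n(n+\alpha)^{-2\sigma}$ in Montgomery--Vaughan beyond $M/(2\sigma-1)$ when $\sigma$ is near $1/2$, so the tension cannot be resolved by a single global truncation with the crude error $O(|s|x^{-\sigma})$.) Also, your remark that the pole term is ``absorbed \dots for an appropriate choice of $M$'' is off: $M=T+|t|$ is forced, not chosen; in fact the bound $M^{2-2\sigma}/(1-\sigma)\ll M/(2\sigma-1)+1/(1-\sigma)$ does hold unconditionally, but it needs a short verification rather than a choice of parameter.

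The fix is essentially what the paper does: use the approximate formula of Karatsuba--Voronin (Theorem III.2.1), in which the truncation length $V$ is matched to the height so that the error is $O(V^{-\sigma})$ \emph{uniformly for} $2\pi\le|v|\le\pi V$, and treat the near-pole range $|v|\le 2\pi$ separately by the sup bound $\sup_{|v|\le2\pi}|\zeta(\sigma+iv,\alpha)|\ll 1/(1-\sigma)$ --- that separate range, not the pole term of the approximate formula, is the true source of the $1/(1-\sigma)$. With that formula your second-moment route does go through (the claimed mean square estimate is true), though note the paper takes a slightly leaner path: it bounds the first moment directly, applying Cauchy--Schwarz and Hilbert's inequality only to the Dirichlet-polynomial piece, so it never needs a uniform second moment of $\zeta(s,\alpha)$ itself. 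As written, however, your proposal's key estimate rests on the false assertion that the $O(M^{-\sigma}|s|)$ error is negligible, so the proof is incomplete.
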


\begin{proof}
Let $1/2<\sigma<1$ be a real number. 
To begin with, we show that 
\begin{gather}\label{eq:04170059}
\frac{1}{V} \int_{-V}^{V} |\zeta(\sigma+iv,\alpha)| \,dv
\ll \frac{1}{\sqrt{2\sigma-1}} 
+ \frac{1}{1-\sigma}
\end{gather}
for any $V \geq 3$, where the implied constant depends only on $\alpha$. 
We use the following approximate formula of $\zeta(\sigma+iv,\alpha)$. 
For any $2\pi \leq |v| \leq \pi V$, we have 
\begin{gather*}
\zeta(\sigma+i v,\alpha)
= \sum_{0 \leq n \leq V} \frac{1}{(n+\alpha)^{\sigma+iv}}
+ \frac{V^{1-(\sigma+iv)}}{\sigma+iv-1}
+ O\left(V^{-\sigma}\right)
\end{gather*}
with an absolute implied constant. 
See \cite[Theorem III.2.1]{KaratsubaVoronin1992} for a proof. 
Then we obtain the estimate
\begin{align}\label{eq:04170044}
&\frac{1}{V} \int_{2\pi}^{V} |\zeta(\sigma+iv,\alpha)| \,dv \\
&\leq \frac{1}{V} \int_{2\pi}^{V} \left|\sum_{0 \leq n \leq V} \frac{1}{(n+\alpha)^{\sigma+iv}}\right| \,dv
+ \frac{1}{V}  \int_{2\pi}^{V} \left|\frac{V^{1-(\sigma+iv)}}{\sigma+iv-1}\right| \,dv
+ C V^{-\sigma} \nonumber\\
&\leq \left\{\frac{1}{V} \int_{0}^{V} \left|\sum_{0 \leq n \leq V} \frac{1}{(n+\alpha)^{\sigma+iv}}\right|^2 \,dv\right\}^{1/2}
+ (\log{V}) V^{-\sigma}
+ C V^{-\sigma} \nonumber
\end{align}
by using the Cauchy--Schwarz inequality, where $C$ is a positive absolute constant. 
Furthermore, we apply Hilbert's inequality \cite[Corollary 2]{MontgomeryVaughan1974} to see that
\begin{gather*}
\int_{0}^{V} \left|\sum_{0 \leq n \leq V} \frac{1}{(n+\alpha)^{\sigma+iv}}\right|^2 \,dv
\leq \sum_{0 \leq n \leq V} \frac{1}{(n+\alpha)^{2\sigma}} (V+3\pi \delta_n^{-1}), 
\end{gather*}
where $\delta_n=\min_{m \neq n} |\log(n+\alpha)-\log(m+\alpha)|$. 
We have 
\begin{gather*}
\delta_n
\geq \log\left(\frac{n+1+\alpha}{n+\alpha}\right)
\geq \frac{1}{n+2}
\gg \frac{1}{V}
\end{gather*}
for any $0 \leq n \leq V$. 
Hence we derive
\begin{align*}
\frac{1}{V} \int_{0}^{V} \left|\sum_{0 \leq n \leq V} \frac{1}{(n+\alpha)^{\sigma+iv}}\right|^2 \,dv 
\ll \sum_{0 \leq n \leq V} \frac{1}{(n+\alpha)^{2\sigma}}. 
\end{align*}
Furthermore, the last sum is estimated as
\begin{gather*}
\sum_{0 \leq n \leq V} \frac{1}{(n+\alpha)^{2\sigma}}
\leq \alpha^{-1}
+ \sum_{n=1}^{\infty} n^{-2\sigma}
\ll \frac{1}{2\sigma-1},  
\end{gather*}
where the implied constant depends only on $\alpha$. 
By \eqref{eq:04170044}, we obtain
\begin{gather}\label{eq:04160052}
\frac{1}{V} \int_{2\pi}^{V} |\zeta(\sigma+iv,\alpha)| \,dv
\ll \frac{1}{\sqrt{2\sigma-1}}
\end{gather}
since $(\log{V}) V^{-\sigma}+V^{-\sigma} \ll 1$ and $1/\sqrt{2\sigma-1} \gg1$. 
Furthermore, \eqref{eq:04160052} implies
\begin{gather}\label{eq:04160053}
\frac{1}{V} \int_{-V}^{-2\pi} |\zeta(\sigma+iv,\alpha)| \,dv
\ll \frac{1}{\sqrt{2\sigma-1}}
\end{gather}
by the identity $\zeta(\sigma-iv,\alpha)=\overline{\zeta(\sigma+iv,\alpha)}$. 
Then, we recall that $\zeta(s,\alpha)$ has a simple pole at $s=1$. 
Thus we deduce
\begin{gather}\label{eq:04160057}
\frac{1}{V} \int_{-2\pi}^{2\pi} |\zeta(\sigma+iv,\alpha)| \,dv
\ll \sup_{|v| \leq 2\pi} |\zeta(\sigma+iv,\alpha)|
\ll \frac{1}{1-\sigma}, 
\end{gather}
where the implied constant depends only on $\alpha$. 
Combining \eqref{eq:04160052}, \eqref{eq:04160053}, and \eqref{eq:04160057}, we obtain \eqref{eq:04170059}. 
Then, we prove the first part of the lemma. 
Let $s=\sigma+it$ be a complex number with $1/2<\sigma<1$. 
We have 
\begin{gather*}
\frac{1}{T} \int_{0}^{T} |\zeta(s+i \tau, \alpha)| \,d \tau
= \frac{1}{T} \int_{t}^{T+t} |\zeta(\sigma+i \tau, \alpha)| \,d \tau 
\leq \frac{1}{T} \int_{-(T+|t|)}^{T+|t|} |\zeta(\sigma+i \tau, \alpha)| \,d \tau
\end{gather*}
since $T+t \leq T+|t|$ and $t \geq -(T+|t|)$. 
Applying \eqref{eq:04170059} with $V=T+|t|$, we obtain
\begin{align*}
\frac{1}{T} \int_{0}^{T} |\zeta(s+i \tau, \alpha)| \,d \tau 
&\leq \left(1+\frac{|t|}{T}\right) \frac{1}{T+|t|} \int_{-(T+|t|)}^{T+|t|} |\zeta(\sigma+i \tau, \alpha)| \,d \tau \\
&\ll \frac{1}{\sqrt{2\sigma-1}} \left(1+\frac{|t|}{T}\right)
+ \frac{1}{1-\sigma} \left(1+\frac{|t|}{T}\right)
\end{align*}
as desired. 
The second part is proved as follows. 
By the Cauchy--Schwarz inequality, we have
\begin{gather*}
\mathbf{E} \left[|\zeta(s,\mathbb{X}_\alpha)|\right]^2
\leq \mathbf{E} \left[|\zeta(s,\mathbb{X}_\alpha)|^2\right]
= \sum_{m=0}^{\infty} \sum_{n=0}^{\infty} 
\frac{\mathbf{E} [\mathbb{X}_\alpha(m) \overline{\mathbb{X}_\alpha(n)}]}{(m+\alpha)^s (n+\alpha)^{\overline{s}}}. 
\end{gather*}
Lemma \ref{lem:2.1} yields that $\mathbf{E} [\mathbb{X}_\alpha(m) \overline{\mathbb{X}_\alpha(n)}]=0$ for $m \neq n$. 
As a result, we deduce
\begin{gather*}
\mathbf{E} \left[|\zeta(s,\mathbb{X}_\alpha)|\right]
\leq \left\{\sum_{n=0}^{\infty} \frac{1}{(n+\alpha)^{2\sigma}} \right\}^{1/2}
\ll \frac{1}{\sqrt{2\sigma-1}}, 
\end{gather*}
where the implied constant depends only on $\alpha$. 
Hence the proof is completed. 
\end{proof}

Let $d$ be the distance of $H(D)$ as in \eqref{eq:04131758}. 
Since $x/(1+x) \leq \min\{x,1\}$ for $x>0$, we obtain the inequality
\begin{gather}\label{eq:04232225}
d(f,g)
\leq \sum_{\nu=1}^{M} 2^{-\nu} d_\nu(f,g) + \sum_{\nu=M+1}^{\infty} 2^{-\nu}
\leq d_M(f,g) + 2^{-M}
\end{gather}
for any $f,g \in H(D)$ and any $M \geq1$. 
Then, applying Lemmas \ref{lem:3.4} and \ref{lem:3.5}, we prove the following result. 

\begin{proposition}\label{prop:3.6}
Let $\alpha$ be an algebraic number with $0<\alpha \leq1$. 
Then there exists an absolute constant $c>0$ such that
\begin{gather*}
\frac{1}{T} \int_{0}^{T} d(\zeta(s+i \tau,\alpha), Z_N(s+i \tau,\alpha)) \,d \tau
\ll \exp\left(-c\sqrt{\log{N}}\right) 
+ \frac{N(\log{N})^3}{T} 
\end{gather*}
for any $T \geq3$ and sufficiently large $N$. 
Furthermore, we have 
\begin{gather*}
\mathbf{E} \left[ d(\zeta(s,\mathbb{X}_\alpha), Z_N(s,\mathbb{X}_\alpha)) \right]
\ll \exp\left(-c\sqrt{\log{N}}\right) 
\end{gather*}
for sufficiently large $N$. 
Here, the implied constants depend only on $\alpha$. 
\end{proposition}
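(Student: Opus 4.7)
The plan is to prove both bounds by the same strategy: localize the distance $d$ to a fixed compact rectangle using inequality \eqref{eq:04232225}, convert the supremum into a contour average via Cauchy's integral formula, then apply the representations in Lemma \ref{lem:3.4} together with the mean value estimates of Lemma \ref{lem:3.5}.

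First I would fix an integer $M \geq 1$ (to be optimized at the end) and write $d(f,g) \leq d_M(f,g) + 2^{-M}$. This reduces matters to bounding $d_M(\zeta(\cdot+i\tau,\alpha), Z_N(\cdot+i\tau,\alpha))$ averaged over $\tau \in [0,T]$. Since $\zeta(s+i\tau,\alpha) - Z_N(s+i\tau,\alpha)$ is holomorphic in $s$ on $D$ (the pole of $\zeta$ at $s=1$ lies outside $D$), Cauchy's formula applied on a slightly enlarged rectangle $K_{M+1} \supset K_M$ (with separation $\asymp 1/M$) yields
\begin{gather*}
\sup_{s \in K_M} |\zeta(s+i\tau,\alpha) - Z_N(s+i\tau,\alpha)|
\ll M \int_{\partial K_{M+1}} |\zeta(s+i\tau,\alpha) - Z_N(s+i\tau,\alpha)| \, |ds|.
\end{gather*}
Fubini then reduces the problem to estimating $\frac{1}{T}\int_0^T |\zeta(s+i\tau,\alpha) - Z_N(s+i\tau,\alpha)|\, d\tau$ uniformly for $s \in \partial K_{M+1}$.

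For each such $s$, Lemma \ref{lem:3.4} with $\delta = 1/(10(M+1))$ produces two pieces. The contour integral along $\RE(w) = -\delta$ is handled by switching orders of integration, invoking the rapid decay \eqref{eq:04150149} of $\widehat{\phi}$, and applying the first part of Lemma \ref{lem:3.5} at abscissa $\sigma - \delta \geq 1/2 + 1/(10(M+1))$; this contributes $\ll N^{-\delta} M (1 + M/T)$. The residue piece $\widehat{\phi}(1-s-i\tau)N^{1-s-i\tau}$ has $|N^{1-s-i\tau}| = N^{1-\sigma} \leq N^{1/2}$ on $K_{M+1}$, and its $\tau$-integral is handled by changing variables and splitting the resulting one-dimensional integral into a neighborhood of the pole of $\widehat{\phi}$ at $0$ (where $|\widehat{\phi}| \ll M$) and its complement (where rapid decay gives $O(1)$); this contributes $\ll M N^{1/2}/T$. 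Including the perimeter $|\partial K_{M+1}| \ll M$ from the Cauchy step yields
\begin{gather*}
\frac{1}{T}\int_0^T d_M(\zeta(\cdot+i\tau,\alpha), Z_N(\cdot+i\tau,\alpha))\, d\tau
\ll M^2 N^{-\delta} + M^2 N^{1/2}/T.
\end{gather*}
Choosing $M = \lfloor c_0 \sqrt{\log N}\rfloor$ for a sufficiently small $c_0 > 0$ balances the two exponential contributions: $N^{-\delta} = \exp(-\log N/(10(M+1)))$ and $2^{-M}$ are both $\ll \exp(-c\sqrt{\log N})$, absorbing polynomial losses in $M$, while $M^2 N^{1/2}/T$ is comfortably below $N(\log N)^3/T$.

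For the random statement the argument is identical in outline but cleaner, because $\zeta(s,\mathbb{X}_\alpha)$ is almost surely holomorphic on $D$ and has no pole; the residue term is absent in the second formula of Lemma \ref{lem:3.4}. Taking expectation before estimating and invoking the second part of Lemma \ref{lem:3.5} gives $\mathbf{E}[|\zeta(s+w,\mathbb{X}_\alpha)|] \ll 1/\sqrt{2(\sigma-\delta)-1} \ll \sqrt{M}$ uniformly on the contour, which combined with the Cauchy step yields $\mathbf{E}[d_M(\zeta(\cdot,\mathbb{X}_\alpha), Z_N(\cdot,\mathbb{X}_\alpha))] \ll M^{3/2} N^{-\delta}$; the same optimization of $M$ delivers $\ll \exp(-c\sqrt{\log N})$. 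The main technical nuisance is the bookkeeping of how the implied constants depend on $M$ through both Lemma \ref{lem:3.5} and the pole of $\widehat{\phi}$ at $w=0$; beyond this, the proof is a direct concatenation of the preceding lemmas and a one-parameter optimization.
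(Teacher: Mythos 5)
Your proposal follows essentially the same route as the paper's proof: truncate the metric via \eqref{eq:04232225}, pass to a contour integral over $\partial K_{M+1}$ by Cauchy's formula, apply Lemma \ref{lem:3.4} with $\delta=(10(M+1))^{-1}$ and Lemma \ref{lem:3.5} (plus the rapid decay \eqref{eq:04150149} and the pole bound $\widehat{\phi}\ll M$ for the residue term), and choose $M\asymp\sqrt{\log N}$. The only deviations are harmless bookkeeping (the separation between $K_M$ and $\partial K_{M+1}$ is $\asymp M^{-2}$, not $M^{-1}$, and the paper crudely uses $N$ rather than $N^{1/2}$ for the residue piece), all absorbed by the polynomial-in-$M$ losses you already allow for.
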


\begin{proof}
Let $M$ be a positive integer chosen later. 
By \eqref{eq:04232225}, we have 
\begin{align}\label{eq:04150024}
&\frac{1}{T} \int_{0}^{T} d(\zeta(s+i \tau,\alpha), Z_N(s+i \tau,\alpha)) \,d \tau \\
&\leq \frac{1}{T} \int_{0}^{T} \sup_{s \in K_M} \left|\zeta(s+i \tau,\alpha)-Z_N(s+i \tau,\alpha)\right| \,d \tau
+ 2^{-M}. \nonumber
\end{align}
Let $s \in K_M$ and $\tau \in \mathbb{R}$. 
Then Cauchy's integral formula is available to obtain 
\begin{gather*}
\zeta(s+i \tau,\alpha)-Z_N(s+i \tau,\alpha)
= \frac{1}{2\pi i} \oint_{\partial K_{M+1}} \frac{\zeta(z+i \tau,\alpha)-Z_N(z+i \tau,\alpha)}{z-s} \,dz. 
\end{gather*}
By the definition of $\{K_\nu\}$, we see that $|z-s| \geq (5M(M+1))^{-1}$ for any $s \in K_M$ and any $z \in \partial K_{M+1}$. 
Therefore we obtain
\begin{align}\label{eq:04150021}
&\sup_{s \in K_M} \left|\zeta(s+i \tau,\alpha)-Z_N(s+i \tau,\alpha)\right| \\
&\leq \frac{5}{2\pi} M(M+1) 
\oint_{\partial K_{M+1}} \left|\zeta(z+i \tau,\alpha)-Z_N(z+i \tau,\alpha)\right| \,|dz|. \nonumber
\end{align}
For any $z \in \partial K_{M+1}$, we have $1/2<\RE(z+i \tau)<1$. 
Then, we apply Lemma \ref{lem:3.4} to derive the estimate
\begin{align*}
&\left|\zeta(z+i \tau,\alpha)-Z_N(z+i \tau,\alpha)\right| \\
&\leq \frac{1}{2\pi} \int_{-\delta-i \infty}^{-\delta+i \infty} |\zeta(z+w+i \tau,\alpha)| |\widehat{\phi}(w)| N^{\RE(w)} \,|dw| 
+ |\widehat{\phi}(1-(z+i \tau))| N^{1-\RE(z)} \\
&\ll N^{-\delta} \int_{-\delta-i \infty}^{-\delta+i \infty} |\zeta(z+w+i \tau,\alpha)| |\widehat{\phi}(w)| \,|dw| 
+ N^{1/2} |\widehat{\phi}(1-z-i \tau)|
\end{align*}
for any $z \in \partial K_{M+1}$, where $\delta$ satisfies $0<\delta<\RE(z)-1/2$. 
Inserting this estimate to \eqref{eq:04150021}, we obtain
\begin{align*}
&\sup_{s \in K_M} \left|\zeta(s+i \tau,\alpha)-Z_N(s+i \tau,\alpha)\right| \\
&\ll M^2 N^{-\delta} \oint_{\partial K_{M+1}} \int_{-\delta-i \infty}^{-\delta+i \infty} |\zeta(z+w+i \tau,\alpha)| |\widehat{\phi}(w)| \,|dw| |dz|\\
&\qquad
+ M^2 N \oint_{\partial K_{M+1}} |\widehat{\phi}(1-z-i \tau)| \,|dz|. 
\end{align*}
Using \eqref{eq:04150024} and changing the orders of integrals, we arrive at
\begin{align}\label{eq:04150239}
&\frac{1}{T} \int_{0}^{T} d(\zeta(s+i \tau,\alpha), Z_N(s+i \tau,\alpha)) \,d \tau \\
&\ll M^2 N^{-\delta} \oint_{\partial K_{M+1}} \int_{-\delta-i \infty}^{-\delta+i \infty} 
\left(\frac{1}{T} \int_{0}^{T} |\zeta(z+w+i \tau,\alpha)| \,d \tau\right) |\widehat{\phi}(w)| \,|dw| |dz| \nonumber\\
&\qquad
+ M^2 N \oint_{\partial K_{M+1}} \left(\frac{1}{T} \int_{0}^{T} |\widehat{\phi}(1-z-i \tau)| \,d \tau\right) \,|dz|
+ 2^{-M}. \nonumber
\end{align}
Let $z \in \partial K_{M+1}$, and put $\delta=(10(M+1))^{-1}$ so that $0<\delta<\RE(z)-1/2$ is satisfied. 
Furthermore, we have $1/2+(10(M+1))^{-1} \leq \RE(z+w) \leq 1-(5(M+1))^{-1}$ on the line $\RE(w)=-\delta$. 
By Lemma \ref{lem:3.5}, we obtain
\begin{align}\label{eq:04150240}
\frac{1}{T} \int_{0}^{T} |\zeta(z+w+i \tau,\alpha)| \,d \tau
&\ll M \left(1+\frac{|\IM(z+w)|}{T}\right) \\
&\ll M^2 (|\IM(w)|+1) \nonumber
\end{align}
for any $T \geq3$, where the implied constant depends only on $\alpha$. 
On the other hand, we have $0 \leq \RE(1-z-i \tau) \leq 1/2$. 
Furthermore, $|\IM(1-z-i \tau)| \geq \tau/2 \geq1$ is satisfied if $\tau \in [2(M+1), T]$. 
Hence we deduce from \eqref{eq:04150149} that $|\widehat{\phi}(1-z-i \tau)|\ll \tau^{-2}$ for any $\tau \in [2(M+1), T]$, where the implied constant is absolute. 
Hence the integral of $|\widehat{\phi}(1-z-i \tau)|$ is evaluated as
\begin{align*}
\frac{1}{T} \int_{0}^{T} |\widehat{\phi}(1-z-i \tau)| \,d \tau
&\ll \frac{1}{T} \int_{2(M+1)}^{T} \tau^{-2} \,d \tau
+ \frac{1}{T} \int_{0}^{2(M+1)} |\widehat{\phi}(1-z-i \tau)| \,d \tau \\
&\leq \frac{1}{T} 
+ \frac{2(M+1)}{T} \sup_{0\leq \tau \leq2(M+1)} |\widehat{\phi}(1-z-i \tau)| 
\end{align*}
for any $T \geq 2(M+1)$. 
Note that the same estimate is valid even if $T<2(M+1)$, since we have 
\begin{gather*}
\frac{1}{T} \int_{0}^{T} |\widehat{\phi}(1-z-i \tau)| \,d \tau
\leq \frac{2(M+1)}{T} \sup_{0\leq \tau \leq2(M+1)} |\widehat{\phi}(1-z-i \tau)| 
\end{gather*}
for any $T<2(M+1)$. 
By the definition of the Mellin transform, we have 
\begin{align*}
|\widehat{\phi}(1-z-i \tau)|
&\leq \int_{0}^{\infty} \phi(x) x^{1-\RE(z)} \,\frac{dx}{x} \\
&\leq \int_{1}^{\infty} \phi(x) x^{1/2} \,\frac{dx}{x}
+ \int_{0}^{1} \phi(x) x^{1/(5(M+1))} \,\frac{dx}{x} \\
&\leq \widehat{\phi}\left(\frac{1}{2}\right)+\widehat{\phi}\left(\frac{1}{5(M+1)}\right)
\end{align*}
for any $z \in \partial K_{M+1}$ and $\tau \in \mathbb{R}$. 
Since $\widehat{\phi}(w)$ has a simple pole at $w=0$, we obtain that $\widehat{\phi}(1/(5(M+1))) \ll M$. 
Hence $|\widehat{\phi}(1-z-i \tau)| \ll M$ follows. 
From the above, we arrive at
\begin{gather}\label{eq:04150241}
\frac{1}{T} \int_{0}^{T} |\widehat{\phi}(1-z-i \tau)| \,d \tau
\ll \frac{M^2}{T}
\end{gather}
for any $T \geq3$, where the implied constant is absolute. 
Inserting \eqref{eq:04150240} and \eqref{eq:04150241} to \eqref{eq:04150239}, we deduce that
\begin{align*}
&\frac{1}{T} \int_{0}^{T} d(\zeta(s+i \tau,\alpha), Z_N(s+i \tau,\alpha)) \,d \tau \\
&\ll M^{4} N^{-\delta} \oint_{\partial K_{M+1}} \,|dz|
\int_{-\delta-i \infty}^{-\delta+i \infty} (|\IM(w)|+1) |\widehat{\phi}(w)| \,|dw| \\
&\qquad\quad
+ \frac{M^4 N}{T} \oint_{\partial K_{M+1}} \,|dz|
+ 2^{-M} \\ 
&\ll M^{5} N^{-\delta} 
\int_{-\delta-i \infty}^{-\delta+i \infty} (|\IM(w)|+1) |\widehat{\phi}(w)| \,|dw| 
+ \frac{M^5 N}{T} 
+ 2^{-M},  
\end{align*}
where the last line is derived from  $\oint_{\partial K_{M+1}} \,|dz| \ll M$. 
Recall that $\delta=(10(M+1))^{-1}$. 
Hence we have $-1/2 \leq -\delta \leq0$. 
Applying \eqref{eq:04150149} with $k=3$, we obtain
\begin{align*}
\int_{-\delta-i \infty}^{-\delta+i \infty} (|\IM(w)|+1) |\widehat{\phi}(w)| \,|dw| 
&\ll \int_{|v| \geq1} |v|^{-2} \,dv
+ \int_{|v| \leq1} |\widehat{\phi}(-\delta+iv)| \,dv \\
&\ll M
\end{align*}
since $\widehat{\phi}(-\delta+iv) \ll M$ for any $|v| \leq1$, which follows from the fact that $\widehat{\phi}(w)$ has a simple pole at $w=0$.   
Here, we choose the positive integer $M$ as $M= \lfloor\sqrt{\log{N}}\rfloor$ for $N \geq3$. 
We finally obtain
\begin{align*}
\frac{1}{T} \int_{0}^{T} d(\zeta(s+i \tau,\alpha), Z_N(s+i \tau,\alpha)) \,d \tau 
&\ll M^6 N^{-\delta} 
+ \frac{M^5 N}{T} 
+ 2^{-M} \\
&\ll \exp\left(-c\sqrt{\log{N}}\right) 
+ \frac{N(\log{N})^3}{T} 
\end{align*}
for sufficiently large $N$, where $c$ is an absolute constant. 
Then, we prove the second part of the proposition. 
In a similar way that we derive \eqref{eq:04150239}, we also obtain 
\begin{align*}
&\mathbf{E} \left[ d(\zeta(s,\mathbb{X}_\alpha), Z_N(s,\mathbb{X}_\alpha)) \right] \\
&\ll M^2 N^{-\delta} \oint_{\partial K_{M+1}} \int_{-\delta-i \infty}^{-\delta+i \infty} 
\mathbf{E} \left[|\zeta(s,\mathbb{X}_\alpha)|\right] |\widehat{\phi}(w)| \,|dw| \,|dz| 
+ 2^{-M}
\end{align*}
by noting the disappearance of the term $\widehat{\phi}(1-s) N^{1-s}$ in the formula for $\zeta(s,\mathbb{X}_\alpha)$ of Lemma \ref{lem:3.4}. 
Applying Lemma \ref{lem:3.5}, we have
\begin{align*}
\mathbf{E} \left[ d(\zeta(s,\mathbb{X}_\alpha), Z_N(s,\mathbb{X}_\alpha)) \right] 
&\ll M^3 N^{-\delta} \oint_{\partial K_{M+1}} \,|dz| \int_{-\delta-i \infty}^{-\delta+i \infty} |\widehat{\phi}(w)| \,|dw|  
+ 2^{-M} \\
&\ll M^5 N^{-\delta} 
+ 2^{-M} \\
&\ll \exp\left(-c\sqrt{\log{N}}\right)
\end{align*}
for sufficiently large $N$. 
From the above, we obtain the conclusion. 
\end{proof}

\subsection{Proof of Theorem \ref{thm:3.1}}\label{sec:3.3}
Let $S$ be a metric space with distance $d$. 
We say that $F: S \to \mathbb{C}$ is a Lipschitz function if there exists a constant $\mathcal{L}>0$ such that
\begin{gather*}
|F(x)-F(y)|
\leq \mathcal{L} d(x,y)
\end{gather*}
for any $x,y \in S$. 
The constant $\mathcal{L}$ is called a Lipschitz constant for $F$. 
By definition, we know that any Lipschitz function is continuous. 
Denote by $\mathcal{B}(S)$ the Borel algebra of $S$ with the topology induced from $d$. 

\begin{lemma}[Portmanteau theorem]\label{lem:3.7}
Let $\{P_n\}$ be a sequence of probability measures on $(S, \mathcal{B}(S))$. 
Let $Q$ be a probability measure on $(S, \mathcal{B}(S))$. 
Then the followings are equivalent.  
\begin{itemize}
\item[$(\mathrm{i})$]
$P_n$ converges weakly to $Q$ as $n \to\infty$.  
\item[$(\mathrm{ii})$]
For any bounded Lipschitz function $F:S \to \mathbb{C}$, we have
\begin{gather*}
\lim_{n \to\infty} \int_{S} F \,dP_n
= \int_{S} F \,dQ. 
\end{gather*}
\item[$(\mathrm{iii})$]
For any open set $A$ of $S$, we have 
\begin{gather*}
\liminf_{n \to\infty} P_n(A) 
\geq Q(A). 
\end{gather*}
\end{itemize}
\end{lemma}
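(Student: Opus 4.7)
The plan is to prove the Portmanteau theorem by establishing the cyclic chain of implications $\mathrm{(i)} \Rightarrow \mathrm{(ii)} \Rightarrow \mathrm{(iii)} \Rightarrow \mathrm{(i)}$. The direction $\mathrm{(i)} \Rightarrow \mathrm{(ii)}$ is immediate, since every bounded Lipschitz function is bounded and continuous and weak convergence is defined to be the statement that $\int F\,dP_n \to \int F\,dQ$ for every such test function.

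For $\mathrm{(ii)} \Rightarrow \mathrm{(iii)}$, I would approximate the indicator $\mathbf{1}_A$ of an open set $A$ from below by truncated distance functions. Setting
\[
F_m(x) = \min\{1,\, m \cdot d(x, S \setminus A)\}, \qquad m \geq 1,
\]
one obtains bounded Lipschitz functions (with Lipschitz constant $m$, from the triangle inequality for $d(\,\cdot\,, S \setminus A)$) satisfying $0 \leq F_m \leq \mathbf{1}_A$ and increasing pointwise to $\mathbf{1}_A$, because openness of $A$ ensures that $d(x, S \setminus A) > 0$ exactly when $x \in A$. Monotone convergence supplies $\int F_m\,dQ \uparrow Q(A)$, hypothesis (ii) gives $\int F_m\,dP_n \to \int F_m\,dQ$ for each fixed $m$, and the bound $P_n(A) \geq \int F_m\,dP_n$ then yields (iii) after first taking $\liminf_{n \to \infty}$ and then letting $m \to \infty$.

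The main work is $\mathrm{(iii)} \Rightarrow \mathrm{(i)}$. Taking complements, (iii) is equivalent to the statement that $\limsup_{n \to \infty} P_n(B) \leq Q(B)$ for every closed $B$. For a bounded continuous $F : S \to \mathbb{C}$, I would split $F$ into real and imaginary parts and then into positive and negative parts, reducing to the case $0 \leq F \leq M$ for some constant $M > 0$. The layer-cake representation then gives
\[
\int_S F\,dP_n = \int_0^M P_n(\{F > t\})\,dt,
\]
and each set $\{F > t\}$ is open by the continuity of $F$. An application of Fatou's lemma combined with (iii) yields
\[
\liminf_{n \to \infty} \int_S F\,dP_n \;\geq\; \int_0^M Q(\{F > t\})\,dt \;=\; \int_S F\,dQ,
\]
and applying the same inequality to $M - F$ produces the reverse bound, completing the verification of (i).

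The main obstacle is this last implication, where one must pass from information about open (equivalently, closed) sets to information about all bounded continuous test functions, and carefully justify the exchange of $\liminf_n$ with the Lebesgue integral over $[0,M]$ through Fatou's lemma; the first two implications amount to standard approximations of indicators by Lipschitz functions.
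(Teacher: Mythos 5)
Your proof is correct. Note, however, that the paper does not actually prove this lemma: it simply cites Klenke, \emph{Probability theory}, Theorem 13.16, so there is no internal argument to compare against; what you have written is essentially the standard textbook proof that the citation stands in for. Your chain $(\mathrm{i}) \Rightarrow (\mathrm{ii}) \Rightarrow (\mathrm{iii}) \Rightarrow (\mathrm{i})$ is sound: $(\mathrm{i}) \Rightarrow (\mathrm{ii})$ is immediate because bounded Lipschitz functions are bounded continuous; the approximation $F_m(x)=\min\{1, m\,d(x,S\setminus A)\}$ is indeed $m$-Lipschitz, increases to $\mathbf{1}_A$ on an open set $A$, and monotone convergence plus $P_n(A)\geq \int F_m\,dP_n$ gives $(\mathrm{iii})$ (only the degenerate case $A=S$ needs the convention $d(x,\emptyset)=+\infty$ or a separate trivial remark); and in $(\mathrm{iii}) \Rightarrow (\mathrm{i})$ the reduction to $0\leq F\leq M$ via real/imaginary and positive/negative parts preserves continuity, the sets $\{F>t\}$ are open, the layer-cake identity is valid, and Fatou applied to the nonnegative functions $t\mapsto P_n(\{F>t\})$ on $[0,M]$ (bounded convergence would also do) yields $\liminf_n\int F\,dP_n\geq\int F\,dQ$, with the reverse bound from $M-F$. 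So the argument is complete and self-contained, which is more than the paper provides; the only cost is length, since for the paper's purposes the external reference suffices.
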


\begin{proof}
See \cite[Theorem 13.16]{Klenke2020} for a proof. 
\end{proof}

From the above preparations, we finally prove Theorem \ref{thm:3.1}. 
By the definitions of $P_{\alpha,T}$ and $Q_\alpha$, the integrals with respect to these probability measures are 
\begin{gather*}
\int_{H(D)} F \,dP_{\alpha,T}
= \frac{1}{T} \int_{0}^{T} F(\zeta(s+i \tau, \alpha)) \,d \tau, \\
\int_{H(D)} F \,dQ_{\alpha}
= \mathbf{E} \left[ F(\zeta(s, \mathbb{X}_\alpha)) \right]
\end{gather*}
for any measurable function $F: H(D) \to \mathbb{C}$. 

\begin{proof}[Proof of Theorem \ref{thm:3.1}]
Let $F: H(D) \to \mathbb{C}$ be a bounded Lipschitz function with a Lipschitz constant $\mathcal{L}$. 
Then we derive by Proposition \ref{prop:3.6} that
\begin{align}\label{eq:04171827}
&\left|\frac{1}{T} \int_{0}^{T} F(\zeta(s+i \tau, \alpha)) \,d \tau
- \frac{1}{T} \int_{0}^{T} F(Z_N(s+i \tau, \alpha)) \,d \tau\right| \\
&\leq \frac{\mathcal{L}}{T} \int_{0}^{T} d(\zeta(s+i \tau,\alpha), Z_N(s+i \tau,\alpha)) \,d \tau \nonumber\\
&\leq \mathcal{L}_\alpha \left\{\exp\left(-c\sqrt{\log{N}}\right) \nonumber
+ \frac{N(\log{N})^3}{T}\right\}
\end{align}
for any $T \geq3$ and sufficiently large $N$, where $\mathcal{L}_\alpha>0$ is a constant depending only on $\alpha$. 
Analogously, we obtain
\begin{gather}\label{eq:04171828}
\Big|\mathbf{E} \left[ F(\zeta(s, \mathbb{X}_\alpha)) \right]-\mathbf{E} \left[ F(Z_N(s, \mathbb{X}_\alpha)) \right]\Big|
\leq \mathcal{L}_\alpha \exp\left(-c\sqrt{\log{N}}\right) 
\end{gather}
for sufficiently large $N$. 
Since the function $\phi$ is compactly supported, there exists an integer $k=k(\alpha,N) \geq1$ such that $\phi((n+\alpha)/N)=0$ for any integer $n \geq k$.  
Let $\psi_N: \mathcal{T}^{k} \to H(D)$ be a continuous map defined as
\begin{gather*}
\psi_N(\underline{\gamma})
= \sum_{n=0}^{k-1} \frac{\gamma_n}{(n+\alpha)^s} \phi \left(\frac{n+\alpha}{N}\right)
\end{gather*}
for $\underline{\gamma}=(\gamma_0,\ldots,\gamma_{k-1}) \in \mathcal{T}^k$. 
Let $\mu_{\alpha,T}$ and $\nu_\alpha$ denote the probability measures on $(\mathcal{T}^k, \mathcal{B}(\mathcal{T}^k))$ as in Proposition \ref{prop:3.3}, where we put $n_j=j-1$ for $j=1,\ldots,k$. 
Then we obtain
\begin{gather*}
\frac{1}{T} \int_{0}^{T} F(Z_N(s+i \tau, \alpha)) \,d \tau
= \int_{\mathcal{T}^k} (F \circ \psi_N) \,d \mu_{\alpha,T}, \\
\mathbf{E} \left[ F(Z_N(s, \mathbb{X}_\alpha)) \right]
= \int_{\mathcal{T}^k} (F \circ \psi_N) \,d \nu_{\alpha}. 
\end{gather*}
Note that $F \circ \psi_N$ is a bounded continuous function on $\mathcal{T}^k$ for any $N \geq1$. 
Hence, we derive the limit formula
\begin{gather}\label{eq:04171829}
\lim_{T \to\infty} \frac{1}{T} \int_{0}^{T} F(Z_N(s+i \tau, \alpha)) \,d \tau
= \mathbf{E} \left[ F(Z_N(s, \mathbb{X}_\alpha)) \right]
\end{gather}
for any $N \geq1$ by Proposition \ref{prop:3.3}. 
Let $\epsilon$ be any positive real number. 
By \eqref{eq:04171827} and \eqref{eq:04171828}, there exists an integer $N_0=N_0(\epsilon,\alpha) \geq3$ such that
\begin{gather*}
\left|\frac{1}{T} \int_{0}^{T} F(\zeta(s+i \tau, \alpha)) \,d \tau
- \frac{1}{T} \int_{0}^{T} F(Z_{N_0}(s+i \tau, \alpha)) \,d \tau\right|
< \frac{\epsilon}{3}, \\
\Big|\mathbf{E} \left[ F(\zeta(s, \mathbb{X}_\alpha)) \right]
- \mathbf{E} \left[ F(Z_{N_0}(s, \mathbb{X}_\alpha)) \right]\Big|
< \frac{\epsilon}{3}
\end{gather*}
for any $T \geq T_1:=(6/\epsilon) \mathcal{L}_\alpha N_0 (\log{N}_0)^3$. 
Furthermore, we derive by \eqref{eq:04171829} that there exists a real number $T_2=T_2(\epsilon,\alpha) \geq3$ such that
\begin{gather*}
\left|\frac{1}{T} \int_{0}^{T} F(Z_{N_0}(s+i \tau, \alpha)) \,d \tau
- \mathbf{E} \left[ F(Z_{N_0}(s, \mathbb{X}_\alpha)) \right]\right|
< \frac{\epsilon}{3}
\end{gather*}
for any $T \geq T_2$. 
As a result, we have 
\begin{gather*}
\left|\frac{1}{T} \int_{0}^{T} F(\zeta(s+i \tau, \alpha)) \,d \tau
- \mathbf{E} \left[ F(\zeta(s, \mathbb{X}_\alpha)) \right]\right|
< \epsilon
\end{gather*}
for any $T \geq \max\{T_1,T_2\}$. 
By Lemma \ref{lem:3.7}, the proof is completed. 
\end{proof}

\section{Covering theorem for the space $A^2(U)$}\label{sec:4}
Let $U$ be a bounded domain with $\overline{U} \subset D$. 
Define the inner product and norm as
\begin{gather*}
\left\langle f,g \right\rangle
= \iint_{U} f(s) \overline{g(s)} \,d \sigma dt
\quad\text{and}\quad
\|f\|=\sqrt{\left\langle f,f \right\rangle}
\end{gather*}
for measurable functions $f, g: U \to \mathbb{C}$. 
The Bergman space $A^2(U)$ is defined as the space of all analytic functions $f:U \to \mathbb{C}$ such that $\|f\|<\infty$. 
Then it is a complex Hilbert space with the inner product described above. 
Throughout this section, we suppose that the boundary $\partial U$ is a Jordan curve. 
Then the subspace
\begin{gather}\label{eq:04191326}
\mathcal{P}
= \left\{a_0+a_1 s+\cdots+a_N s^N ~\middle|~ \text{$a_k \in \mathbb{C}$ for $0 \leq k \leq N$ with $N \geq0$}\right\}
\end{gather}
is dense in $A^2(U)$. 
See \cite{Farrell1934} for a proof. 
Let $X$ be any subset of $A^2(U)$. 
For every $\epsilon>0$, we denote by $X^{(\epsilon)}$ the $\epsilon$-neighborhood of $X$ defined as
\begin{gather*}
X^{(\epsilon)}
= \left\{f \in A^2(U) ~\middle|~ \text{$\exists g \in X$ such that $\|f-g\|<\epsilon$}\right\}. 
\end{gather*}
Furthermore, we define
\begin{gather*}
\Gamma(\alpha,N)
= \left\{ \sum_{n=0}^{N} \frac{\gamma_n}{(n+\alpha)^s} ~\middle|~ 
\text{$|\gamma_n|=1$ for $0 \leq n \leq N$}  \right\}
\end{gather*}
for $0<\alpha \leq1$ and $N \geq0$. 
In this section, we prove the following covering theorem for the space $A^2(U)$. 

\begin{theorem}\label{thm:4.1}
Let $0<c<1$. 
Then, for every $\epsilon>0$, there exists a positive real number $\rho$ such that
\begin{gather*}
A^2(U)
= \bigcup_{N_0\geq0} \bigcap_{N> N_0} \bigcap_{\alpha \in \mathcal{A}_\rho(c)} \Gamma(\alpha,N)^{(\epsilon)}, 
\end{gather*}
where $\rho$ depends only on $c,\epsilon,U$. 
\end{theorem}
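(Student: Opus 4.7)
The plan is to treat the covering theorem as a Pechersky--Bagchi-type denseness statement in $A^2(U)$, with the uniformity in $\alpha$ supplied by a simple perturbation argument. Fix $f \in A^2(U)$ and $\epsilon > 0$; the goal is to produce $\rho > 0$ depending only on $c, \epsilon, U$, and an integer $N_0$ depending additionally on $f$, such that for every $\alpha \in \mathcal{A}_\rho(c)$ and every $N > N_0$ one can find $\gamma_0, \ldots, \gamma_N \in S^1$ with $\bigl\| f - \sum_{n=0}^N \gamma_n (n+\alpha)^{-s} \bigr\|_{A^2(U)} < \epsilon$. The key point is that the coefficients will be chosen at the single reference point $\alpha = c$ (which need not lie in $\mathcal{A}$) and then reused for all nearby $\alpha$.

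The first step is to prove the denseness, in $A^2(U)$, of the set
\[
\mathcal{D}(c) = \Bigl\{ \sum_{n=0}^\infty \gamma_n (n+c)^{-s} : |\gamma_n|=1,\ \text{the series converges in } A^2(U) \Bigr\}.
\]
The standard Pechersky-type criterion reduces this to (i) $\sum_{n \geq 0} \|(n+c)^{-s}\|_{A^2(U)}^2 < \infty$, which is immediate from $U \subset \{\sigma > 1/2\}$, and (ii) $\sum_{n \geq 0} |\langle (n+c)^{-s}, g \rangle_{A^2(U)}| = \infty$ for every $g \in A^2(U) \setminus \{0\}$. For (ii) one writes the inner product as $\Psi_g(\log(n+c))$, where $\Psi_g(w) = \iint_U e^{-sw} \overline{g(s)}\, d\sigma dt$ is entire, and argues by contradiction: convergence of the sum would force $\Psi_g$ to decay too rapidly along the positive real axis to remain nonzero, contradicting the injectivity of the Laplace transform on $A^2(U)$. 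Granting this, one picks $\gamma_n^{(0)} \in S^1$ and $\tilde f \in \mathcal{D}(c)$ with $\sum_n \gamma_n^{(0)} (n+c)^{-s} = \tilde f$ and $\|f - \tilde f\| < \epsilon/3$, then enlarges $N_0$ so that the partial sums up to any $N > N_0$ remain within $\epsilon/3$ of $\tilde f$.

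For the transfer from $c$ to $\alpha \in \mathcal{A}_\rho(c)$, I would exploit
\[
(n+c)^{-s} - (n+\alpha)^{-s} = -\int_\alpha^c s (n+t)^{-s-1}\, dt,
\]
together with the fact that $U$ is bounded and contained in $\{\sigma \geq \sigma_1\}$ for some $\sigma_1 > 1/2$, to obtain $\|(n+c)^{-s} - (n+\alpha)^{-s}\|_{A^2(U)} \ll_{c, U} |\alpha - c|\, (n + c - \rho)^{-\sigma_1 - 1}$. Since $\sigma_1 + 1 > 1$, summing over $n$ and using $|\gamma_n^{(0)}| = 1$ with the triangle inequality gives
\[
\Bigl\| \sum_{n=0}^N \gamma_n^{(0)} \bigl((n+c)^{-s} - (n+\alpha)^{-s}\bigr) \Bigr\|_{A^2(U)} \leq C(c, U) \cdot \rho
\]
uniformly in $N \geq 0$ and in $\alpha \in \mathcal{A}_\rho(c)$. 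Choosing $\rho$ small enough that $C(c, U) \rho < \epsilon/3$ and assembling the three estimates via the triangle inequality yields the theorem; crucially the same coefficients $\gamma_n^{(0)}$ work for every $\alpha$ in the ball.

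The main obstacle is step (ii) in the denseness argument. One must translate the qualitative information $g \neq 0$ into a quantitative lower bound on $\sum_{n} |\Psi_g(\log(n+c))|$, ruling out summable decay of $\Psi_g$ along the arithmetic-like sequence $\{\log(n+c)\}_{n \geq 0}$. The standard route combines an exponential-type bound for $\Psi_g$ in horizontal strips with a density/sampling argument for entire functions, in the pattern that underlies Bagchi's original proof for $\zeta(s)$. Once (ii) is in hand, the remaining components---Pechersky denseness, partial-sum truncation, and the Lipschitz dependence on $\alpha$---are standard Hilbert-space and elementary-calculus arguments.
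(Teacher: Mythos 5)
Your proposal is correct in outline, and at its analytic core it coincides with the paper's argument; the difference is in the functional-analytic packaging of the unimodularity constraint. You invoke a Pechersky-type denseness theorem for series $\sum_n \gamma_n (n+c)^{-s}$ with all $|\gamma_n|=1$, reducing everything to the divergence of $\sum_n |\langle (n+c)^{-s}, g\rangle|$ for every $g\neq 0$. The paper avoids citing Pechersky: Proposition \ref{prop:4.7} shows the convex sets $B(c,M)$ of tail polynomials with coefficients $|\beta_n|\le 1$ are dense via the Hahn--Banach separation lemma (Lemma \ref{lem:4.2}), whose failure would produce exactly a $g\neq 0$ with $\sum_n|\langle (n+c)^{-s},g\rangle|<\infty$; then in the proof of Theorem \ref{thm:4.1} it writes $f$ as a head $\sum_{n\le M}(n+c)^{-s}$ with coefficients equal to $1$ plus a tail, and rounds the tail coefficients to modulus one by Lemma \ref{lem:4.4}, the rounding error being $O\bigl((\sum_{n>M}\|(n+c)^{-s}\|^2)^{1/2}\bigr)$ and hence small for $M$ large. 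Both routes finish identically: coefficients are chosen at the reference point $c$ and reused for all $\alpha\in\mathcal{A}_\rho(c)$, with your integral perturbation bound being exactly Proposition \ref{prop:4.8} (including the need for a cap such as $\rho\le\min\{c,1-c\}/2$ so that the constant depends only on $c$ and $U$). Your route buys brevity, provided the complex-unimodular form of Pechersky's theorem is quoted correctly; the paper's route is self-contained. Do note that your step (ii), which you rightly identify as the main obstacle, is precisely where the paper's work lies: one needs $F_g(z)=\langle e^{-zs},g\rangle\not\equiv 0$ (via density of polynomials in $A^2(U)$, Farrell's theorem, which is why $\partial U$ is assumed to be a Jordan curve), the Bernstein-type Lemma \ref{lem:4.5} to pass from summability along the sequence $\{\log(n+c)\}$ to the decay bound $\limsup_{r\to\infty}r^{-1}\log|F_g(r)|\le -1$ on the whole positive axis (this requires the density argument with the sets $A_\beta$ and $C_\beta(m)$), and the growth lower bound of Lemma \ref{lem:4.6} for Laplace transforms of measures supported in $\{\sigma>-1\}$; ``injectivity of the Laplace transform'' by itself is not the contradiction --- the contradiction is between the growth rates $\le -1$ and $>-1$.
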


\subsection{Preliminary lemmas}\label{sec:4.1}
Let $H$ be a complex Hilbert space. 
Recall that any continuous linear functional $f: H \to \mathbb{C}$ is represented as $f(x)=\left\langle x,y \right\rangle$ with some $y \in H$ by the Riesz representation theorem. 
We say that a subset $K \subset H$ is convex if $t x+(1-t)y \in K$ for any $x,y \in K$ and any $0 \leq t \leq1$. 

\begin{lemma}\label{lem:4.2}
Let $H$ be a complex Hilbert space. 
Let $K$ be any closed convex subset of $H$, and suppose $x \in H \setminus K$. 
Then there exist an element $y \in H$ and a constant $\eta \in \mathbb{R}$ such that 
\begin{gather*}
\RE \left\langle z,y \right\rangle
\leq \eta
< \RE \left\langle x,y \right\rangle
\end{gather*}
for all $z \in K$. 
\end{lemma}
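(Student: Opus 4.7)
The plan is to prove this separation lemma by the standard nearest-point (projection) argument in a Hilbert space. The main inputs are that $H$ is complete and $K$ is closed and convex, so the distance $\inf_{z \in K} \|x-z\|$ is attained at a unique point.

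First, I would establish the existence of a closest point $x_0 \in K$ to $x$. Let $d = \inf_{z \in K} \|x - z\|$, which is positive since $x \notin K$ and $K$ is closed. Take a minimizing sequence $\{z_n\} \subset K$ with $\|x - z_n\| \to d$. Using the parallelogram identity
\begin{equation*}
\|z_m - z_n\|^2 = 2\|x - z_m\|^2 + 2\|x - z_n\|^2 - 4\left\|x - \tfrac{z_m + z_n}{2}\right\|^2,
\end{equation*}
together with the fact that $(z_m + z_n)/2 \in K$ by convexity (so the last squared norm is at least $d^2$), one sees that $\{z_n\}$ is Cauchy. By completeness it converges to some $x_0$, and $x_0 \in K$ by closedness, with $\|x - x_0\| = d$.

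Next, I would set $y = x - x_0$, so that $y \neq 0$ and $\|y\|^2 = d^2 > 0$. For any $z \in K$ and $t \in [0,1]$, convexity gives $x_0 + t(z - x_0) = (1-t)x_0 + tz \in K$, hence
\begin{equation*}
d^2 \leq \|x - x_0 - t(z - x_0)\|^2 = d^2 - 2t\, \RE \langle y, z - x_0 \rangle + t^2 \|z - x_0\|^2.
\end{equation*}
Dividing by $t > 0$ and letting $t \to 0^+$ yields the variational inequality $\RE \langle y, z - x_0 \rangle \leq 0$ for every $z \in K$, i.e.,
\begin{equation*}
\RE \langle z, y \rangle \leq \RE \langle x_0, y \rangle \qquad (z \in K).
\end{equation*}

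Finally, I would set $\eta = \RE \langle x_0, y \rangle$. Then $\RE \langle z, y \rangle \leq \eta$ holds for all $z \in K$, while
\begin{equation*}
\RE \langle x, y \rangle = \RE \langle x_0, y \rangle + \RE \langle x - x_0, y \rangle = \eta + \|y\|^2 > \eta,
\end{equation*}
which completes the proof. There is no serious obstacle: the one place care is needed is the parallelogram-identity step used to show that the minimizing sequence is Cauchy, but this is routine once convexity and closedness of $K$ are invoked.
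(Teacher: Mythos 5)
Your proof is correct, but it follows a different route from the paper: the paper does not prove the lemma directly, instead quoting it as a special case of the Hahn--Banach separation theorem for locally convex spaces (citing Einsiedler--Ward, Theorem 8.73), whereas you give a self-contained Hilbert-space argument via the nearest-point projection. Your steps all check out: the minimizing-sequence/parallelogram argument correctly yields the unique closest point $x_0 \in K$ (with $d>0$ because $K$ is closed and $x \notin K$); the variational inequality $\RE\langle y, z-x_0\rangle \leq 0$ follows from expanding $\|y - t(z-x_0)\|^2$ and letting $t \to 0^+$, and since $\RE\langle y,w\rangle = \RE\langle w,y\rangle$ this gives $\RE\langle z,y\rangle \leq \eta := \RE\langle x_0,y\rangle$; and the strict inequality $\eta < \RE\langle x,y\rangle$ comes from $\RE\langle x-x_0,y\rangle = \|y\|^2 = d^2 > 0$. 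What your approach buys is an elementary, fully explicit proof exploiting the Hilbert structure (completeness plus the parallelogram law), which is all that is needed here since the lemma is only applied in the Bergman space $A^2(U)$; what the paper's citation buys is brevity and a statement valid in the much wider setting of locally convex spaces, generality that the paper does not actually use.
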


\begin{proof}
This is a special case of the Hahn--Banach separation theorem, which holds for every locally convex vector space. 
See \cite[Theorem 8.73]{EinsiedlerWard2017} for a proof. 
\end{proof}

For a subspace $L$ of $H$, we denote by $L^\perp$ the orthogonal complement, that is, 
\begin{gather*}
L^\perp
= \{x \in H \mid \text{$\forall y \in L$, $\left\langle x,y \right\rangle=0$} \}. 
\end{gather*}
If $L$ is a closed subspace, then every element $x \in H$ has a unique representation $x=y+z$ such that $y \in L$ and $z \in L^\perp$. 
Thus we obtain $H= L \oplus L^\perp$. 

\begin{lemma}\label{lem:4.3}
Let $H$ be a complex Hilbert space. 
Then a subspace $L$ is dense in $H$ if and only if $L^\perp=\{0\}$. 
\end{lemma}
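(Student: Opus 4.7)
The plan is to establish both implications separately, using the standard Hilbert space machinery together with the Hahn--Banach separation result that has just been recorded as Lemma \ref{lem:4.2}.

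For the easier direction, I would assume $L$ is dense in $H$ and take any $x \in L^\perp$. By density there exists a sequence $\{y_n\} \subset L$ converging to $x$. Continuity of the inner product gives $\langle x, y_n \rangle \to \langle x, x \rangle = \|x\|^2$. On the other hand each $\langle x, y_n\rangle$ vanishes since $x \in L^\perp$ and $y_n \in L$. Hence $\|x\|^2 = 0$, so $x = 0$, proving $L^\perp = \{0\}$.

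For the converse, I would argue by contrapositive: assume $L$ is not dense, i.e.\ the closure $\overline{L}$ is a proper subset of $H$, and produce a nonzero element of $L^\perp$. Pick any $x \in H \setminus \overline{L}$. Since $\overline{L}$, being a closed subspace, is a closed convex subset of $H$, Lemma \ref{lem:4.2} yields some $y \in H$ and $\eta \in \mathbb{R}$ with
\begin{gather*}
\RE \langle z, y \rangle \leq \eta < \RE \langle x, y \rangle
\qquad \text{for all } z \in \overline{L}.
\end{gather*}
Since $\overline{L}$ is a subspace, replacing $z$ by $tz$ with $t \in \mathbb{R}$ arbitrary forces $\RE \langle z, y \rangle = 0$, and then replacing $z$ by $iz$ forces $\IM \langle z, y \rangle = 0$ as well. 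Thus $\langle z, y\rangle = 0$ for every $z \in \overline{L}$, so in particular $y \in L^\perp$. Moreover $y \neq 0$ because $0 = \RE \langle 0, y\rangle \leq \eta < \RE \langle x, y\rangle$. This contradicts $L^\perp = \{0\}$.

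No serious obstacle is expected here; the argument is entirely formal and uses only properties already listed in the excerpt (continuity of the inner product, the Hahn--Banach separation statement of Lemma \ref{lem:4.2}, and the fact that $\overline{L}$ is again a subspace). The one small subtlety is passing from the real separation inequality to the vanishing of the complex inner product, which I would handle by the two-step rescaling trick above rather than by invoking orthogonal projection, so that the proof stays consistent with the tools the paper has chosen to record.
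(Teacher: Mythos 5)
Your proof is correct, but it follows a genuinely different route from the paper. The paper disposes of the lemma in one line: writing $M=\overline{L}$, it invokes the orthogonal decomposition $H=M\oplus M^{\perp}$ (the projection theorem for closed subspaces, stated just before the lemma), from which both implications are immediate — if $M^{\perp}=\{0\}$ then $H=M=\overline{L}$, and conversely. You instead prove the easy direction by continuity of the inner product along an approximating sequence, and the harder direction by contrapositive via the Hahn--Banach separation statement of Lemma \ref{lem:4.2}, using the rescaling trick ($z\mapsto tz$ with $t\in\mathbb{R}$, then $z\mapsto iz$) to upgrade the real separation inequality $\RE\langle z,y\rangle\leq\eta<\RE\langle x,y\rangle$ to $\langle z,y\rangle=0$ on all of $\overline{L}$, producing a nonzero element of $L^{\perp}$; all steps check out, including $y\neq0$ from $0\leq\eta<\RE\langle x,y\rangle$. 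What your approach buys is that it leans only on the separation lemma the paper has explicitly recorded, rather than on the projection decomposition; what the paper's approach buys is brevity, since the decomposition $H=M\oplus M^{\perp}$ makes the equivalence a one-line observation. Either argument is acceptable; yours is simply the convexity-based proof of (essentially) the same Hilbert-space fact that the projection theorem encodes.
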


\begin{proof}
Note that $M= \overline{L}$ is a closed subspace of $H$. 
Then the result follows from the decomposition $H= M \oplus M^\perp$. 
\end{proof}

\begin{lemma}\label{lem:4.4}
Let $H$ be a complex Hilbert space, and take $x_1,\ldots,x_n \in H$ arbitrarily. 
Let $\beta_1,\ldots,\beta_n$ be complex numbers with $|\beta_j| \leq1$ for $1 \leq j \leq n$. 
Then we have 
\begin{gather*}
\left\| \sum_{j=1}^{n} \beta_j x_j 
- \sum_{j=1}^{n} \gamma_j x_j \right\|^2
\leq 4 \sum_{j=1}^{n} \|x_j\|^2
\end{gather*}
with some complex numbers $\gamma_1,\ldots,\gamma_n$ with $|\gamma_j|=1$ for $1 \leq j \leq n$.
\end{lemma}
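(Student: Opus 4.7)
The plan is to combine an elementary midpoint decomposition with a Rademacher-type averaging argument. Since each $\beta_j$ lies in the closed unit disk, I would first write $\beta_j=(u_j+v_j)/2$ with $|u_j|=|v_j|=1$: explicitly, if $\beta_j=r_je^{i\theta_j}$ with $0\leq r_j\leq1$ and $\psi_j=\arccos r_j$, then $u_j=e^{i(\theta_j+\psi_j)}$ and $v_j=e^{i(\theta_j-\psi_j)}$ do the job, because $u_j+v_j=2e^{i\theta_j}\cos\psi_j=2\beta_j$. This produces two unit-modulus ``candidates'' whose arithmetic mean reproduces $\beta_j$.

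Next, for each sign vector $\epsilon=(\epsilon_1,\dots,\epsilon_n)\in\{-1,+1\}^n$ I would define $\gamma_j(\epsilon)=u_j$ if $\epsilon_j=+1$ and $\gamma_j(\epsilon)=v_j$ if $\epsilon_j=-1$, so that $|\gamma_j(\epsilon)|=1$ for every $\epsilon$ and every $j$. A direct check in both cases gives the identity
\[
\beta_j-\gamma_j(\epsilon)=\epsilon_j\,\frac{v_j-u_j}{2}.
\]
Setting $w_j=(v_j-u_j)x_j/2\in H$, the quantity to be bounded becomes $\bigl\|\sum_j\epsilon_jw_j\bigr\|^2$.

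The key step is to average this over a uniformly random $\epsilon\in\{-1,+1\}^n$ (Rademacher signs). Expanding the norm squared and using the orthogonality relation $\mathbf{E}[\epsilon_j\epsilon_k]=\delta_{jk}$ collapses the double sum to the diagonal:
\[
\mathbf{E}\Bigl[\Bigl\|\sum_{j=1}^n\epsilon_jw_j\Bigr\|^2\Bigr]=\sum_{j=1}^n\|w_j\|^2=\sum_{j=1}^n\frac{|v_j-u_j|^2}{4}\|x_j\|^2\leq\sum_{j=1}^n\|x_j\|^2,
\]
since $|v_j-u_j|\leq2$. By the mean-value principle there exists at least one deterministic $\epsilon_0\in\{-1,+1\}^n$ for which $\bigl\|\sum_j\epsilon_{0,j}w_j\bigr\|^2$ does not exceed this average. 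Taking $\gamma_j=\gamma_j(\epsilon_0)$ then yields the stated inequality with room to spare (in fact with the improved constant $1$ in place of $4$).

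I anticipate no serious obstacle: the argument is formal once the midpoint decomposition is in hand. The only mild verification required is the solvability of $\cos\psi_j=r_j$ in $[0,\pi]$, which is possible precisely because the hypothesis $|\beta_j|\leq1$ forces $r_j\in[0,1]$. The orthogonality of Rademacher signs is exactly the device that replaces the crude bound $\bigl(\sum_j\|x_j\|\bigr)^2$ (obtainable from the triangle inequality with any pointwise choice $\gamma_j=\beta_j/|\beta_j|$) by the sharper $\sum_j\|x_j\|^2$.
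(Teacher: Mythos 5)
Your proof is correct, and it is worth noting that the paper does not prove this lemma at all: it simply cites Laurin\v{c}ikas \cite[Lemma 6.1.15]{Laurincikas1996a}, so your argument supplies a genuinely self-contained alternative. The midpoint decomposition $\beta_j=(u_j+v_j)/2$ with unimodular $u_j,v_j$ is the same starting point as the classical proof, but where the standard treatment proceeds by induction on $n$ using the parallelogram identity (for a midpoint $m=(a+b)/2$ one has $\min\{\|z-a\|^2,\|z-b\|^2\}\leq\|z-m\|^2+\|a-b\|^2/4$, applied coordinate by coordinate), you instead average $\bigl\|\sum_j\epsilon_j w_j\bigr\|^2$ over all $2^n$ Rademacher sign patterns and invoke the orthogonality $\mathbf{E}[\epsilon_j\epsilon_k]=\delta_{jk}$; the cross terms $\langle w_j,w_k\rangle$ vanish in expectation even though they are complex, since the $\epsilon_j$ are real, independent and mean zero. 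Both routes give the sharper constant $1$ in place of $4$, so your "room to spare" remark is consistent with the cited source stating a weaker bound; the probabilistic averaging is arguably cleaner because it avoids the induction and makes the source of the $\sum_j\|x_j\|^2$ bound (the diagonal terms) transparent. Two cosmetic points only: the parenthetical fallback choice $\gamma_j=\beta_j/|\beta_j|$ is undefined when $\beta_j=0$ (harmless, as it is not used in the proof), and one should note explicitly, as you implicitly do, that the existence of a deterministic $\epsilon_0$ below the average is just the pigeonhole principle over the finitely many equally likely sign vectors.
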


\begin{proof}
See \cite[Lemma 6.1.15]{Laurincikas1996a} for a proof. 
\end{proof}

Let $F(z)$ be an entire function. 
We say that $F(z)$ is of exponential type if 
\begin{gather*}
\limsup_{r \to\infty} \frac{\log|F(re^{i \theta})|}{r}
< \infty
\end{gather*}
uniformly in $\theta \in \mathbb{R}$. 
The following lemmas are also used to prove Theorem \ref{thm:4.1}. 

\begin{lemma}\label{lem:4.5}
Let $F(z)$ be an entire function of exponential type. 
Let $\{\lambda_m\}$ be a sequence of complex numbers. 
If there exist positive real numbers $\alpha,\beta,\delta$ such that 
\begin{itemize}
\item[$(\mathrm{a})$]
$\displaystyle{\limsup_{y \to\infty} \frac{\log|F(\pm iy)|}{y} \leq \alpha}$, 
\item[$(\mathrm{b})$]
$\displaystyle{|\lambda_m-\lambda_n| \geq \delta |m-n|}$, 
\item[$(\mathrm{c})$]
$\displaystyle{\lim_{m \to\infty} \frac{\lambda_m}{m}=\beta}$, 
\item[$(\mathrm{d})$]
$\displaystyle{\alpha \beta <\pi}$, 
\end{itemize}
then we have 
\begin{gather*}
\limsup_{m \to\infty} \frac{\log|F(\lambda_m)|}{|\lambda_m|}
= \limsup_{r \to\infty} \frac{\log|F(r)|}{r}. 
\end{gather*}
\end{lemma}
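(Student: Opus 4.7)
The plan is to establish the claimed equality by proving both inequalities separately. Write $A := \limsup_{m\to\infty} \log|F(\lambda_m)|/|\lambda_m|$ and $B := \limsup_{r\to\infty}\log|F(r)|/r$. Both quantities are finite because $F$ has exponential type: the Phragm\'en--Lindel\"of indicator $h_F(\theta) := \limsup_{r\to\infty} \log|F(re^{i\theta})|/r$ is a well-defined, bounded, upper semicontinuous function of $\theta$ by classical results of P\'olya. Hypothesis $(a)$ gives $h_F(\pm \pi/2) \leq \alpha$, and $B = h_F(0)$.

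For the direction $A \leq B$, I would use only hypotheses $(a)$ and $(c)$. Since $\beta>0$, condition $(c)$ forces $\arg \lambda_m \to 0$ as $m\to\infty$. The classical refinement of the indicator bound states that $\log|F(re^{i\theta})|/r \leq h_F(\theta)+o(1)$ uniformly in $\theta$ as $r\to\infty$, a standard fact from the theory of entire functions of exponential type. Writing $\lambda_m = |\lambda_m| e^{i\theta_m}$ with $\theta_m \to 0$ and combining this uniform bound with the upper semicontinuity of $h_F$ at $\theta=0$, one obtains $A \leq \limsup_m h_F(\theta_m) \leq h_F(0) = B$.

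The reverse inequality $B \leq A$ is the main content, and relies on the density/separation conditions $(b), (c), (d)$ in an essential way. The quantity $1/\beta$ should be interpreted as the asymptotic density of the sequence $\{\lambda_m\}$, and condition $(d)$, rewritten as $1/\beta > \alpha/\pi$, is exactly the classical sampling (Nyquist) density condition for entire functions of exponential type $\alpha$. The plan here is to form the canonical product $\Pi(z) := \prod_m(1-z/\lambda_m)$ (suitably regularized for convergence); by the standard theory of Lindel\"of and Cartwright, $\Pi$ is of exponential type with indicator proportional to $\beta|\sin\theta|$, and the separation condition $(b)$ yields uniform lower bounds for $|\Pi'(\lambda_m)|$. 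One then applies an interpolation formula in the spirit of Cartwright--Levin, expressing $F(z)$ as a convergent series of contributions $F(\lambda_m)/\Pi'(\lambda_m)\cdot \Pi(z)/(z-\lambda_m)$; the strict inequality $\alpha\beta<\pi$ guarantees both that the series converges uniformly on the real axis and that the growth of the summation is controlled by the supremum of $|F(\lambda_m)|$ along the sequence. This forces $B \leq A$.

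The main obstacle is the careful construction and estimation of the canonical product $\Pi$ and the interpolation formula for the reverse inequality: one needs simultaneous control of $|\Pi(r)|$ from below (for $r \in \mathbb{R}$ bounded away from the $\lambda_m$) and $|\Pi'(\lambda_m)|$ from below in terms of the separation $\delta$, together with the right growth exponent on the imaginary axis so that Phragm\'en--Lindel\"of matches $\alpha$. These steps rest on the deep classical theory of entire functions of exponential type (Plancherel--P\'olya, Cartwright, Levin), and in practice I would appeal directly to the corresponding lemma in Levin's \emph{Distribution of Zeros of Entire Functions} rather than reproducing the full construction, as the statement of Lemma \ref{lem:4.5} is essentially a reformulation of a well-known theorem in that framework.
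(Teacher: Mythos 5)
The paper gives no proof of this lemma at all: it simply cites \cite[Theorem 6.4.12]{Laurincikas1996a}, so your decision to treat it as a known result from the classical theory of entire functions of exponential type and to appeal to Levin's monograph is essentially the same approach. Your sketch of the two inequalities (the uniform indicator bound plus continuity of the indicator for $A\leq B$, and the canonical product with a Cartwright--Levin interpolation formula under the density condition $\alpha\beta<\pi$ for $B\leq A$) is the standard argument behind that cited theorem; the only inaccuracies are minor: the indicator of the canonical product is $(\pi/\beta)|\sin\theta|$ rather than ``proportional to $\beta|\sin\theta|$'', and since the $\lambda_m$ are complex with possibly unbounded imaginary parts one must invoke the versions of these interpolation/density results valid for sequences merely asymptotic to the positive real axis, not just real sequences.
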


\begin{proof}
See \cite[Theorem 6.4.12]{Laurincikas1996a} for a proof. 
\end{proof}

\begin{lemma}\label{lem:4.6}
Let $\mu$ be a comple measure on $(\mathbb{C}, \mathcal{B}(\mathbb{C}))$ whose support is compact and contained in the half-plane $\sigma>\sigma_0$. 
If the function $F(z)$ defined by
\begin{gather*}
F(z)
= \int_{\mathbb{C}} e^{zs} \,d \mu(s)
\end{gather*}
for $z \in \mathbb{C}$ does not vanish everywhere, then we have 
\begin{gather*}
\limsup_{r \to\infty} \frac{\log|F(r)|}{r}
> \sigma_0. 
\end{gather*}
\end{lemma}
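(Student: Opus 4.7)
\textbf{Proof plan for Lemma~\ref{lem:4.6}.} The plan is to argue by contraposition: I assume $\limsup_{r \to \infty} \log|F(r)|/r \leq \sigma_0$ and deduce $F \equiv 0$, contradicting the hypothesis. The central object is the Cauchy--Borel transform of $\mu$,
\begin{gather*}
\Phi(\zeta) = \int_{\mathbb{C}} \frac{d\mu(s)}{\zeta-s},
\end{gather*}
which is well-defined and holomorphic on $\mathbb{C} \setminus \supp(\mu)$; for $|\zeta|$ sufficiently large it admits the Laurent expansion $\Phi(\zeta) = \sum_{n \geq 0} a_n \zeta^{-n-1}$ with moments $a_n = \int_{\mathbb{C}} s^n\,d\mu(s)$, so in particular $\Phi(\zeta) \to 0$ as $|\zeta| \to \infty$.

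The first step is to identify $\Phi$ with a Laplace transform of $F$ on a far-right half-plane. Setting $\sigma_1 := \max_{s \in \supp(\mu)} \RE(s)$ and using the formula $(\zeta-s)^{-1} = \int_0^\infty e^{-(\zeta-s)t}\,dt$ valid for $\RE(\zeta) > \sigma_1$, an application of Fubini's theorem (justified by absolute convergence on the compact support of $\mu$) yields
\begin{gather*}
\Phi(\zeta)
= \int_0^\infty F(t) e^{-\zeta t}\,dt
\qquad \text{for $\RE(\zeta) > \sigma_1$.}
\end{gather*}
Under the standing hypothesis, the Laplace integral on the right converges absolutely for every $\zeta$ with $\RE(\zeta) > \sigma_0$ and defines a holomorphic function $\Psi$ on this enlarged half-plane. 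Crucially, $\Phi = \Psi$ on the connected open set $\{\RE(\zeta) > \sigma_1\}$.

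Since $\supp(\mu)$ is a compact subset of $\{\RE > \sigma_0\}$, the two open sets $\mathbb{C} \setminus \supp(\mu)$ and $\{\RE(\zeta) > \sigma_0\}$ together cover $\mathbb{C}$. Propagating the identity $\Phi = \Psi$ from the far-right region along connected components (identity theorem), I glue the two functions to an entire function $\widetilde\Phi$ that inherits the asymptotic $\widetilde\Phi(\zeta) \to 0$ as $|\zeta| \to \infty$ from $\Phi$. Liouville's theorem then forces $\widetilde\Phi \equiv 0$, so all moments $a_n$ vanish; expanding $F$ as $F(z) = \sum_{n \geq 0} a_n z^n / n!$ (obtained by interchanging integration and the Taylor expansion of $e^{zs}$), we conclude $F \equiv 0$, the desired contradiction.

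The main obstacle is the consistency of the gluing: one must verify that $\Phi$ and $\Psi$ agree on every connected component of $(\mathbb{C} \setminus \supp(\mu)) \cap \{\RE > \sigma_0\}$, not only on the subregion $\{\RE > \sigma_1\}$. This follows from the topological fact that the ``filled'' compact set $\widehat K$, obtained by adjoining to $\supp(\mu)$ all bounded components of its complement, lies in the convex hull of $\supp(\mu) \subset \{\RE > \sigma_0\}$; hence the relevant component of the overlap is simply $\{\RE > \sigma_0\} \setminus \widehat K$, a half-plane with a compact subset removed, which is connected. With that in hand, the identity-theorem gluing step is routine and the proof concludes as above.
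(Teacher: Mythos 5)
The paper offers no proof of Lemma \ref{lem:4.6} at all: it simply cites \cite[Lemma 6.4.10]{Laurincikas1996a}. Your proposal is, in substance, the classical Borel--Laplace argument that underlies that reference, and it is correct: the identity $\Phi(\zeta)=\int_0^\infty F(t)e^{-\zeta t}\,dt$ for $\RE(\zeta)>\sigma_1$ (Fubini is fine because $\int (\RE\zeta-\RE s)^{-1}d|\mu|(s)<\infty$ there), the holomorphy of the Laplace integral on $\{\RE(\zeta)>\sigma_0\}$ under the contradiction hypothesis, the gluing to an entire function vanishing at infinity, Liouville, the vanishing of all moments $a_n$, and hence $F\equiv 0$, are all sound, and this is exactly the intended mechanism of the cited lemma.

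One step is justified too casually, though the statement you need is true. You assert that $\{\RE(\zeta)>\sigma_0\}\setminus\widehat K$ is connected \emph{because} it is ``a half-plane with a compact subset removed''; as a general principle that is false (removing a circle disconnects a half-plane). What saves the argument is precisely that $\widehat K$ is the filled hull, so $\mathbb{C}\setminus\widehat K$ is connected, and that $\widehat K$ lies at some positive distance $\delta$ from the line $\RE(\zeta)=\sigma_0$ (your convex-hull observation gives $\widehat K\subset\{\RE(\zeta)\geq\sigma_0+\delta\}$ for some $\delta>0$). Then any point of $\{\RE(\zeta)>\sigma_0\}\setminus\widehat K$ can be joined to the far right inside that set: take a path in the connected open set $\mathbb{C}\setminus\widehat K$ and compose it with the map $x+iy\mapsto\max\{x,\sigma_0+\delta/2\}+iy$, which sends $\mathbb{C}\setminus\widehat K$ into $\{\RE(\zeta)\geq\sigma_0+\delta/2\}\setminus\widehat K$, and prepend the horizontal segment from the initial point to its image, which stays in the $\widehat K$-free strip $\sigma_0<\RE(\zeta)\leq\sigma_0+\delta/2$. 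With that short repair the identity-theorem gluing of $\Psi$ on $\{\RE(\zeta)>\sigma_0\}$ with $\Phi$ on $\mathbb{C}\setminus\widehat K$ is legitimate, and the proof is complete.
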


\begin{proof}
See \cite[Lemma 6.4.10]{Laurincikas1996a} for a proof. 
\end{proof}

\subsection{Proof of Theorem \ref{thm:4.1}}\label{sec:4.2}
Before proving Theorem \ref{thm:4.1}, we show two auxiliary results by applying the lemmas in Section \ref{sec:4.1}. 
The first result is a denseness result for the space $A^2(U)$. 
Note that a similar result is also used in Bagchi's method for the proof of universality in \cite{Bagchi1981}. 

\begin{proposition}\label{prop:4.7}
Let $0<c<1$. 
Then the set 
\begin{gather*}
B(c,M)
= \left\{ \sum_{M<n \leq N} \frac{\beta_n}{(n+c)^s} ~\middle|~ 
\text{$|\beta_n| \leq1$ for $M<n \leq N$ with $N \geq M+1$}  \right\}
\end{gather*}
is dense in $A^2(U)$ for any $M \geq0$. 
\end{proposition}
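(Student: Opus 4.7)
The plan follows a Hahn--Banach duality argument \`a la Bagchi. Suppose, for contradiction, that the closure of $B(c,M)$ in $A^2(U)$ is a proper subset. Since $B(c,M)$ is convex and contains the zero function (take every $\beta_n=0$), Lemma~\ref{lem:4.2} together with the Riesz representation theorem produces a nonzero $f \in A^2(U)$ and a real $\eta \geq 0$ such that $\RE \langle g, f\rangle \leq \eta$ for every $g$ in the closure of $B(c,M)$.

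A crucial feature of $B(c,M)$ is its invariance under independent rotations of the coefficients: for any real phases $\{\theta_n\}$, replacing each $\beta_n$ by $e^{i\theta_n}\beta_n$ produces another element of $B(c,M)$. Setting $\rho_f(n) := \langle (n+c)^{-s}, f\rangle$ and choosing the phases so that the terms of $\sum_{M<n\leq N} \beta_n \rho_f(n)$ all have a common nonnegative argument, with $|\beta_n|=1$, the separating inequality collapses into $\sum_{M<n\leq N}|\rho_f(n)| \leq \eta$ for every $N \geq M+1$. Letting $N \to \infty$ gives
$$\sum_{n>M}|\rho_f(n)| < \infty,$$
and the remainder of the argument is to deduce $f \equiv 0$ from this, which will be the desired contradiction.

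For the deduction, I would introduce the entire auxiliary function
$$G(z) = \iint_U e^{-zs}\,\overline{f(s)}\,d\sigma\,dt, \qquad z \in \mathbb{C},$$
so that $\rho_f(n) = G(\log(n+c))$. Compactness of $\overline{U} \subset D$ makes $G$ of exponential type with $|G(\pm iy)| \ll e^{\alpha|y|}$ for $\alpha = \sup_{s\in\overline{U}}|t|$, while the complex measure $\overline{f(s)}\,d\sigma\,dt$ on $U$ is compactly supported in $\{\sigma > 1/2\}$. If one can establish $G \equiv 0$, then expanding $e^{-zs}$ as a power series in $z$ and equating Taylor coefficients forces $\iint_U s^k \overline{f(s)}\,d\sigma\,dt = 0$ for every $k \geq 0$. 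Density of the polynomial space $\mathcal{P}$ in \eqref{eq:04191326} --- valid under the Jordan-boundary assumption by Farrell's theorem --- combined with Lemma~\ref{lem:4.3} then yields $f \equiv 0$.

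The main obstacle is passing from summability of $|G(\log(n+c))|$ to $G \equiv 0$. The sequence $\lambda_n = \log(n+c)$ fails the linear spacing condition $|\lambda_m - \lambda_n| \geq \delta|m-n|$ required by Lemma~\ref{lem:4.5}, because $\lambda_{n+1}-\lambda_n = O(1/n)$. My plan is to extract a geometrically sparse subsequence $\{n_k\}$ so that $\log(n_k+c)/k$ tends to a positive limit $\beta$ with $\alpha\beta<\pi$, and then apply Lemma~\ref{lem:4.5} to convert the decay $G(\log(n_k+c)) \to 0$ into a nontrivial upper bound on $\limsup_{r\to\infty}\log|G(r)|/r$. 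That this sparse sample still captures the full decay is guaranteed by the fact that $G$ is Lipschitz on $\mathbb{R}_{\geq 0}$ with an exponentially small derivative $|G'(r)| \ll e^{-r\sigma_*}$, where $\sigma_* = \inf_{s\in\overline{U}}\sigma > 1/2$, so $G$ cannot oscillate wildly between consecutive $\lambda_n$. Combining this upper bound with the lower growth bound of Lemma~\ref{lem:4.6} --- applied after the substitution $s \mapsto -s$, which places the support in $\{\tilde{\sigma} > -\sup_{\overline{U}}\sigma\} \supset \{\tilde{\sigma} > -1\}$ --- produces the required contradiction unless $G \equiv 0$, completing the argument.
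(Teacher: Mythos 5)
Your overall architecture is the same as the paper's: Hahn--Banach separation (Lemma \ref{lem:4.2}), rotating the phases to force $\sum_{n>M}\bigl|\langle (n+c)^{-s},f\rangle\bigr|<\infty$, the auxiliary function $G(z)=\langle e^{-zs},f\rangle$ of exponential type, the moment argument (via density of $\mathcal{P}$ and Lemma \ref{lem:4.3}) showing $G\equiv0$ would force $f\equiv0$, and the intended clash between Lemma \ref{lem:4.5} and Lemma \ref{lem:4.6}. The gap is in the one step carrying the analytic content: turning the summability into exponential decay of $G$ \emph{with a specific rate} along a sequence admissible for Lemma \ref{lem:4.5}. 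Lemma \ref{lem:4.6}, applied to the measure supported on $-\overline{U}$, gives at best $\limsup_{r\to\infty}\log|G(r)|/r>\sigma_0$ for any $\sigma_0<-\max_{s\in\overline{U}}\sigma$, so in particular only $>-1$; hence your upper bound must be $\leq-1$ (or at least strictly below $-\max_{\overline{U}}\sigma$), i.e.\ you must produce points $\lambda_m$ with $|G(\lambda_m)|\leq e^{-\lambda_m}$. Your plan does not produce any rate: along a \emph{prescribed} geometrically sparse subsequence $n_k$, convergence of $\sum_n|G(\log(n+c))|$ yields only $G(\log(n_k+c))\to0$ with no quantitative speed (the largest terms of a convergent series may sit exactly at your sparse indices, e.g.\ of size $k^{-2}$, giving $\log|G(\lambda_{n_k})|/\lambda_{n_k}\to0$), and the Lipschitz input $|G'(r)|\ll e^{-r\sigma_*}$ with $\sigma_*=\min_{\overline{U}}\sigma$ can never upgrade this beyond rate $e^{-r\sigma_*}$ --- which is already true pointwise and trivially, since $|G(r)|\leq e^{-r\sigma_*}\iint_U|f|\,d\sigma dt$, and is useless here because $\sigma_*<1$, so the resulting bound $\limsup\leq-\sigma_*>-1$ is perfectly consistent with Lemma \ref{lem:4.6}. (There is also a small slip: $-\overline{U}$ is not contained in the open half-plane $\{\sigma>-\sup_{\overline{U}}\sigma\}$, and that half-plane is contained in, not containing, $\{\sigma>-1\}$; but this is cosmetic next to the rate issue.)

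What the paper does at this point is essentially the opposite of sparse sampling: it exploits the full density of the sample points $\log(n+c)$, of which there are about $e^{\lambda}$ per unit length near height $\lambda$. Fixing $\beta=\pi/(2\alpha)$, if on some window $\bigl(m\beta,(m+\tfrac12)\beta\bigr)$ one had $|G(r)|\geq e^{-r}$ throughout, then every $n$ with $\log(n+c)$ in that window would contribute at least $1/(n+c)$ to the convergent series, and each such window contributes about $\beta/2$; hence all but finitely many windows contain an \emph{adaptively chosen} point $\lambda_m$ with $|G(\lambda_m)|<e^{-\lambda_m}$. These $\lambda_m$ are spaced at least $\beta/2$ apart and satisfy $\lambda_m/m\to\beta$, so Lemma \ref{lem:4.5} applies and gives $\limsup_{r\to\infty}\log|G(r)|/r\leq-1$, and it is precisely the exponent $1>\max_{\overline{U}}\sigma$, inherited from the density $1/(n+c)=e^{-\log(n+c)}$, that creates the contradiction with Lemma \ref{lem:4.6}. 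To repair your proof you would need to replace the sparse-subsequence-plus-Lipschitz mechanism by this (or an equivalent) window-counting argument.
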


\begin{proof}
Suppose $\overline{B(c,M)} \neq A^2(U)$ for some $M \geq0$. 
Then we can take a function $f$ in $A^2(U)$ such that $f \notin \overline{B(c,M)}$. 
By definition, the set $B(c,M)$ is convex. 
Thus the closure $\overline{B(c,M)}$ is also a convex subset, which is closed in $A^2(U)$. 
By Lemma \ref{lem:4.2}, there exist a function $g \in A^2(U)$ and a constant $\eta \in \mathbb{R}$ such that 
\begin{gather}\label{eq:04191249}
\RE \left\langle h,g \right\rangle
\leq \eta
< \RE \left\langle f,g \right\rangle
\end{gather}
for all $h \in \overline{B(c,M)}$. 
Put 
\begin{gather*}
h_N(s)
= \sum_{M<n \leq N} 
\frac{|\left\langle (n+c)^{-s}, g(s) \right\rangle|}{\left\langle (n+c)^{-s}, g(s) \right\rangle} 
\frac{1}{(n+c)^s}
\end{gather*}
for $N \geq M+1$. 
It is obviously an element of the set $\overline{B(c,M)}$. 
By \eqref{eq:04191249}, we have 
\begin{gather*}
\RE \left\langle h_N, g \right\rangle
= \sum_{M<n \leq N} \left|\left\langle (n+c)^{-s}, g(s) \right\rangle\right|
< \RE \left\langle f,g \right\rangle
\end{gather*}
for any $N \geq M+1$. 
Therefore, we find that 
\begin{gather}\label{eq:04200421}
\sum_{n=0}^{\infty} \left|\left\langle (n+c)^{-s}, g(s) \right\rangle\right|
< \infty 
\end{gather}
must be satisfied. 
Using the function $g$, we define $F_g(z) = \left\langle e^{-zs}, g(s) \right\rangle$ for $z \in \mathbb{C}$. 
Put $\alpha=\max\{ |s| \mid s \in \overline{U}\}$. 
Then the Cauchy--Schwarz inequality yields that
\begin{align}\label{eq:04200506}
|F_g(r e^{i \theta})| 
&\leq \left\{\int_{U} |\exp(-r e^{i \theta}s)|^2 \,d \sigma dt\right\}^{1/2} 
\left\{\int_{U} |g(s)|^2 \,d \sigma dt\right\}^{1/2} \\
&\ll \exp(\alpha r) \nonumber
\end{align}
uniformly in $\theta \in \mathbb{R}$. 
Hence $F_g(z)$ is an entire function of exponential type. 
Furthermore, it does not vanish everywhere. 
Indeed, if $F_g(z)=0$ for any $z \in \mathbb{C}$, then 
\begin{gather*}
F_g(0)
= \left\langle 1, g(s) \right\rangle
=0
\quad\text{and}\quad
\frac{d^k}{dz^k} F_g(z) \bigg|_{z=0}
= (-1)^k \langle s^k, g(s) \rangle
= 0
\end{gather*}
would hold for all $k \geq1$. 
These imply that $g \in \mathcal{P}^\perp$, where $\mathcal{P}$ is the subspace of $A^2(U)$ as in \eqref{eq:04191326}. 
However, since $\mathcal{P}$ is dense in $A^2(U)$, we get $g=0$ by Lemma \ref{lem:4.3}. 
This contradicts inequality \eqref{eq:04191249}. 
Then, we prove 
\begin{gather}\label{eq:04200509}
\limsup_{r \to\infty} \frac{\log|F_g(r)|}{r}
\leq -1 
\end{gather}
by applying Lemma \ref{lem:4.5}. 
We put $\beta=\pi/(2\alpha)$ so that condition $(\mathrm{d})$ of Lemma \ref{lem:4.5} is satisfied. 
By \eqref{eq:04200506}, we see that condition $(\mathrm{a})$ is also satisfied. 
Define $A_\beta$ as the set of all integers $m \geq1$ such that $|F_g(r)|<e^{-r}$ holds with some $r \in \mathbb{R}$ satisfying $m \beta <r<(m+1/2)\beta$. 
Let 
\begin{gather*}
C_\beta(m)
= \left\{ n \in \mathbb{Z}_{\geq0} ~\middle|~ m\beta <\log(n+c)<\left(m+\frac{1}{2}\right)\beta \right\} 
\end{gather*}
for $m \geq1$. 
By the definitions of $A_\beta$ and $C_\beta(m)$, we have 
\begin{gather*}
\sum_{n=0}^{\infty} |F_g(\log(n+c))|
\geq \sum_{m \notin A_\beta} \sum_{n \in C_\beta(m)} |F_g(\log(n+c))|
\geq \sum_{m \notin A_\beta} \sum_{n \in C_\beta(m)} \frac{1}{n+c}. 
\end{gather*}
Furthermore, the series of the left-hand side is finite by \eqref{eq:04200421}, and therefore, 
\begin{gather}\label{eq:04201336}
\sum_{m \notin A_\beta} \sum_{n \in C_\beta(m)} \frac{1}{n+c}
< \infty. 
\end{gather}
If $m$ is sufficiently large, then the inner sum is evaluated as
\begin{gather}\label{eq:04201337}
\sum_{n \in C_\beta(m)} \frac{1}{n+c}
\geq \sum_{e^{m \beta}<n<e^{(m+1/2)\beta}-1} \frac{1}{n+1}
= \frac{\beta}{2}
+ o(1) 
\end{gather}
by the well-known formula $\sum_{n \leq x} 1/n= \log{x}+\gamma+o(1)$ as $x \to\infty$. 
Here, $\gamma$ is the Euler constant. 
By \eqref{eq:04201336} and \eqref{eq:04201337}, we find that the complement of $A_\beta$ must be a finite set. 
Denote by $a_m$ the $m$-th integer in the set $A_\beta$. 
Then we have $a_m/m \to 1$ as $m \to\infty$. 
By the definition of $A_\beta$, there exists $\lambda_m \in \mathbb{R}$ for any $m \geq1$ such that 
\begin{gather*}
|F_g(\lambda_m)|
< e^{-\lambda_m}
\quad\text{and}\quad
a_m \beta 
< \lambda_m
< \left(a_m+\frac{1}{2}\right) \beta. 
\end{gather*}
Hence $\lambda_m$ satisfies condition $(\mathrm{c})$ of Lemma \ref{lem:4.5}. 
Furthermore, condition $(\mathrm{b})$ holds with $\delta=\beta/2$ since
\begin{gather*}
|\lambda_m-\lambda_n|
> (a_m-a_n) \beta -\frac{\beta}{2}
\geq (m-n)\beta -\frac{\beta}{2}
\geq \frac{\beta}{2} (m-n) 
\end{gather*}
for any $m>n$. 
As a result, we deduce from Lemma \ref{lem:4.5} that 
\begin{gather*}
\limsup_{r \to\infty} \frac{\log|F_g(r)|}{r}
= \limsup_{m \to\infty} \frac{\log|F_g(\lambda_m)|}{|\lambda_m|}. 
\end{gather*}
Then we finally obtain \eqref{eq:04200509}, since $|F_g(\lambda_m)|<e^{-\lambda_m}$ holds for any $m \geq1$. 
On the other hand, the function $F_g(z)$ is represented as
\begin{gather*}
F_g(z)
= \int_{\mathbb{C}} e^{zs} \,d \mu_g(s), 
\end{gather*}
where $d \mu_g(s)= 1_U(-s) \overline{g(-s)} d \sigma dt$. 
Note that $\mu_g$ is a complex measure on $(\mathbb{C}, \mathcal{B}(\mathbb{C}))$ which is supported on $-\overline{U}= \{-s \mid s \in \overline{U} \}$. 
The assumption $\overline{U} \subset D$ implies that $-\overline{U}$ is contained in the half-plane $\sigma>-1$ . 
Hence Lemma \ref{lem:4.6} yields
\begin{gather}\label{eq:04201356}
\limsup_{r \to\infty} \frac{\log|F_g(r)|}{r}
> -1, 
\end{gather}
and we obtain contradiction by \eqref{eq:04200509} and \eqref{eq:04201356}. 
As a result, $\overline{B(c,M)}= A^2(U)$ holds for any $M \geq0$, that is, the set $B(c,M)$ is dense in $A^2(U)$ for any $M \geq0$.
\end{proof}

The second result ensures that we can approximate an element in $\Gamma(\alpha,N)$ by an element in $\Gamma(c,N)$ uniformly for $\alpha \in \mathcal{A}_\rho(c)$ if $\rho$ is sufficiently small. 
The condition $|\alpha-c| \leq \rho$ in the definition of $\mathcal{A}_\rho(c)$ is required here. 

\begin{proposition}\label{prop:4.8}
Let $0<c<1$. 
Then, for every $\epsilon>0$, there exists a positive real number $\rho$ such that
\begin{gather*}
\sup_{\alpha \in \mathcal{A}_\rho(c)}
\left\| \sum_{n=0}^{N} \frac{\gamma_n}{(n+\alpha)^s} 
-\sum_{n=0}^{N} \frac{\gamma_n}{(n+c)^s}\right\|
< \epsilon
\end{gather*}
for any $N \geq0$ if $|\gamma_n|=1$ for $0 \leq n \leq N$, where $\rho$ depends only on $c,\epsilon,U$. 
\end{proposition}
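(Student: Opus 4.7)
The plan is to prove Proposition \ref{prop:4.8} by a simple triangle-inequality argument, estimating each term $(n+\alpha)^{-s} - (n+c)^{-s}$ via the fundamental theorem of calculus and exploiting the fact that $U$ is contained in the strip $\sigma > 1/2$ to obtain a convergent majorant independent of $N$ and of the coefficients $\gamma_n$.

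First I would fix constants associated with the geometry of $U$: since $\overline{U}$ is a compact subset of $D$, there exist $\sigma_0 > 1/2$ and $M > 0$ such that $\sigma \geq \sigma_0$ and $|s| \leq M$ for every $s = \sigma+it \in \overline{U}$. I would also impose $\rho < \min\{c/2, (1-c)/2\}$ from the outset, so that every $\alpha \in \mathcal{A}_\rho(c)$ satisfies $\alpha \geq c - \rho \geq c/2 > 0$.

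Next, for each $n \geq 0$ I would write
\begin{equation*}
\frac{1}{(n+\alpha)^s} - \frac{1}{(n+c)^s}
= -s \int_{c}^{\alpha} \frac{dx}{(n+x)^{s+1}},
\end{equation*}
from which $|(n+\alpha)^{-s} - (n+c)^{-s}| \leq |s| \cdot |\alpha - c| \cdot (n+c-\rho)^{-\sigma-1}$ pointwise on $U$. Taking $A^2(U)$-norms and using $|s| \leq M$, $\sigma \geq \sigma_0$ on $U$, this gives
\begin{equation*}
\left\| \frac{1}{(n+\alpha)^s} - \frac{1}{(n+c)^s} \right\|
\leq C(U) \,|\alpha-c|\, (n+c-\rho)^{-\sigma_0-1},
\end{equation*}
where $C(U)$ depends only on $M$ and $\meas(U)$. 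Applying the triangle inequality together with $|\gamma_n| = 1$, I obtain
\begin{equation*}
\left\| \sum_{n=0}^{N} \frac{\gamma_n}{(n+\alpha)^s} - \sum_{n=0}^{N} \frac{\gamma_n}{(n+c)^s}\right\|
\leq C(U) \,|\alpha - c| \sum_{n=0}^{\infty} (n+c-\rho)^{-\sigma_0-1}.
\end{equation*}

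Since $\sigma_0 > 1/2$, the series $\sum_{n \geq 0} (n+c-\rho)^{-\sigma_0-1}$ converges to a finite quantity $S(c,\rho,U)$ which is bounded uniformly in $\rho \in (0, c/2)$; call this bound $S_0$. The right-hand side then does not exceed $C(U) \, S_0 \, \rho$, which is independent of both $N$ and the choice of $\gamma_0, \ldots, \gamma_N$ on the unit circle. Choosing $\rho < \min\{c/2, (1-c)/2, \epsilon/(C(U) S_0)\}$ yields the desired uniform bound $<\epsilon$, and this $\rho$ depends only on $c$, $\epsilon$, and $U$. There is no substantial obstacle here: the only points requiring care are keeping $\rho < c$ (so that the $n=0$ term stays bounded and the integration path avoids the pole) and checking that the exponent in the majorant series is strictly greater than $1$, both of which are handled by the choices above.
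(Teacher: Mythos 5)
Your proof is correct and follows essentially the same route as the paper: bound the difference termwise by the integral representation $(n+\alpha)^{-s}-(n+c)^{-s}=-s\int_c^{\alpha}(n+x)^{-s-1}\,dx$, use the geometry of $\overline{U}\subset D$ together with $\rho\le\min\{c,1-c\}/2$ to get a majorant $\rho$ times a convergent series independent of $N$ and the $\gamma_n$, then choose $\rho$ small. The only cosmetic difference is that the paper bounds the exponent crudely by $3/2$ (yielding $\zeta(3/2,c/2)$ and the constant $K=\iint_U|s|^2\,d\sigma\,dt$) where you keep $\sigma_0+1$; both give the same conclusion.
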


\begin{proof}
First, we have
\begin{align}\label{eq:04210148}
\left\| \sum_{n=0}^{N} \frac{\gamma_n}{(n+\alpha)^s} 
-\sum_{n=0}^{N} \frac{\gamma_n}{(n+c)^s}\right\|
&\leq \sum_{n=0}^{N} \left\|(n+\alpha)^{-s}-(n+c)^{-s}\right\| \\
&= \sum_{n=0}^{N} \left\|\int_{c}^{\alpha} (-s) (n+u)^{-s-1} \,du \right\| \nonumber 
\end{align}
since $|\gamma_n|=1$ for $0 \leq n \leq N$. 
Recall that $|\alpha-c| \leq \rho$ is valid for any $\alpha \in \mathcal{A}_\rho(c)$. 
Hence the inequality
\begin{gather*}
\left|\int_{c}^{\alpha} (-s) (n+u)^{-s-1} \,du \right|
\leq \rho |s| \sup_{u \in (c-\rho, c+\rho)}\left|(n+u)^{-s-1}\right|
\end{gather*}
holds for any $\alpha \in \mathcal{A}_\rho(c)$. 
Assume that $\rho$ satisfies $0<\rho \leq \min\{c, 1-c\}/2$. 
Then we obtain
\begin{gather*}
\left|\int_{c}^{\alpha} (-s) (n+u)^{-s-1} \,du \right|
\leq \rho |s| (n+c/2)^{-3/2}
\end{gather*}
for any $\alpha \in \mathcal{A}_\rho(c)$ and any $s \in U$. 
Inserting this inequality to \eqref{eq:04210148}, we deduce
\begin{align*}
\left\| \sum_{n=0}^{N} \frac{\gamma_n}{(n+\alpha)^s} 
-\sum_{n=0}^{N} \frac{\gamma_n}{(n+c)^s}\right\|
\leq \rho \sqrt{K} \sum_{n=0}^{N} (n+c/2)^{-3/2}
\leq \rho \sqrt{K} \zeta(3/2, c/2), 
\end{align*}
where $K= \iint_{U} |s|^2 \,d \sigma dt$ is a positive constant determined only by $U$. 
Hence, if we assume further that $\rho$ satisfies $0<\rho<\epsilon \{\sqrt{K} \zeta(3/2, c/2)\}^{-1}$, then we have
\begin{gather*}
\left\| \sum_{n=0}^{N} \frac{\gamma_n}{(n+\alpha)^s} 
-\sum_{n=0}^{N} \frac{\gamma_n}{(n+c)^s}\right\|
< \epsilon
\end{gather*}
for any $\alpha \in \mathcal{A}_\rho(c)$ and any $N \geq0$. 
This is the desired result. 
\end{proof}

\begin{proof}[Proof of Theorem \ref{thm:4.1}]
Let $0<c<1$ and $f \in A^2(U)$. 
Let $M$ be a positive integer chosen later. 
Then the function
\begin{gather*}
g(s)
= f(s)
- \sum_{n=0}^{M} \frac{1}{(n+c)^s}
\end{gather*}
belongs to the space $A^2(U)$. 
Thus Proposition \ref{prop:4.7} yields $g \in \overline{B(c,M)}$, that is, there exists an element $h \in B(c,M)$ with $\|g-h\|<\epsilon/3$ for every $\epsilon>0$. 
Hence, by the definition of $B(c,M)$, there exists an integer $N_0 \geq M+1$ such that 
\begin{gather*}
\left\|\left(f(s) 
- \sum_{n=0}^{M} \frac{1}{(n+c)^s}\right)
- \sum_{M<n \leq N_0} \frac{\beta_n}{(n+c)^s} \right\|
< \frac{\epsilon}{3}
\end{gather*}
with some $|\beta_n| \leq1$ for $M<n \leq N_0$. 
Note that the integer $N_0$ depends only on $c,\epsilon,f,U$, and $M$. 
Put $\beta_n=0$ for $N_0<n \leq N$ with any integer $N> N_0$. 
Then we obtain the inequality
\begin{gather}\label{eq:04210224}
\left\|f(s) 
- \sum_{n=0}^{M} \frac{1}{(n+c)^s}
- \sum_{M<n \leq N} \frac{\beta_n}{(n+c)^s} \right\|
< \frac{\epsilon}{3}. 
\end{gather}
We also deduce from Lemma \ref{lem:4.4} that 
\begin{gather*}
\left\|\sum_{M<n \leq N} \frac{\beta_n}{(n+c)^s}
- \sum_{M<n \leq N} \frac{\gamma_n}{(n+c)^s} \right\|^2
\leq 4 \sum_{M<n \leq N} \left\| (n+c)^{-s} \right\|^2
\end{gather*}
with some $|\gamma_n|=1$ for $M<n \leq N$. 
The sum of the right-hand side is estimated as 
\begin{gather*}
\sum_{M<n \leq N} \left\| (n+c)^{-s} \right\|^2
\leq \sum_{M<n \leq N} L (n+c)^{-2 \sigma_0} 
\leq \frac{L}{2\sigma_0-1} M^{1-2\sigma_0}, 
\end{gather*}
where $\sigma_0=\min\{\RE(s) \mid s \in \overline{U}\}$ and $L= \iint_{U} \,d \sigma dt$ are positive constants determined only by $U$. 
Here, we choose the integer $M=M(\epsilon,U)$ so that 
\begin{gather*}
\frac{L}{2\sigma_0-1} M^{1-2\sigma_0}
< \frac{\epsilon^2}{36}
\end{gather*}
is satisfied. 
Then we derive
\begin{gather}\label{eq:04210225}
\left\|\sum_{M<n \leq N} \frac{\beta_n}{(n+c)^s}
- \sum_{M<n \leq N} \frac{\gamma_n}{(n+c)^s} \right\|
\leq 2 \left(\frac{L}{2\sigma_0-1} M^{1-2\sigma_0}\right)^{1/2}
< \frac{\epsilon}{3}. 
\end{gather}
Lastly, by Proposition \ref{prop:4.8}, there exists a positive real number $\rho$ depending only on $c,\epsilon,U$ such that
\begin{gather}\label{eq:04210226}
\sup_{\alpha \in \mathcal{A}_\rho(c)}
\left\| \sum_{n=0}^{N} \frac{\gamma_n}{(n+\alpha)^s} 
-\sum_{n=0}^{N} \frac{\gamma_n}{(n+c)^s}\right\|
< \frac{\epsilon}{3}, 
\end{gather}
where we put $\gamma_n=1$ for $0 \leq n \leq M$. 
Combining \eqref{eq:04210224}, \eqref{eq:04210225}, and \eqref{eq:04210226}, we arrive at the inequality
\begin{gather*}
\left\| f(s) 
-\sum_{n=0}^{N} \frac{\gamma_n}{(n+\alpha)^s}\right\|
< \epsilon
\end{gather*}
for any $N> N_0$ and any $\alpha \in \mathcal{A}_\rho(c)$. 
In other words, we obtain that
\begin{gather*}
f 
\in \bigcup_{N_0\geq0} \bigcap_{N> N_0} \bigcap_{\alpha \in \mathcal{A}_\rho(c)} \Gamma(\alpha,N)^{(\epsilon)}. 
\end{gather*}
Therefore the desired result follows. 
\end{proof}

\subsection{Support of the random Dirichlet polynomial $\zeta_N(s,\mathbb{Y}_\alpha)$}\label{sec:4.3}
For any function $f \in H(D)$, the restriction of $f$ to $U$ is an element of $A^2(U)$. 
Then we regard the random Dirichlet polynomial $\zeta_N(s,\mathbb{Y}_\alpha)$ defined in Section \ref{sec:2} as a random element valued on $A^2(U)$. 
The following result is a simple consequence of Theorem \ref{thm:4.1}. 

\begin{corollary}\label{cor:4.9}
Let $0<c<1$ and $f \in A^2(U)$. 
Then, for every $\epsilon>0$, there exist a positive real number $\rho$ and an integer $N_0 \geq 0$ such that 
\begin{gather*}
\mathbf{P} \left(\|\zeta_N(s,\mathbb{Y}_\alpha)-f(s)\|<\epsilon\right)
> 0
\end{gather*}
for any $N>N_0$ and any $\alpha \in \mathcal{A}_\rho(c)$, where $\rho$ and $N_0$ depend only on $c,\epsilon,f,U$. 
\end{corollary}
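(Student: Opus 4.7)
The plan is to derive Corollary \ref{cor:4.9} as a direct consequence of Theorem \ref{thm:4.1} combined with the independence and uniform distribution of the random variables $\mathbb{Y}_\alpha(n)$ on the unit circle $S^1$. The key observation is that Theorem \ref{thm:4.1} exhibits every $f \in A^2(U)$ as approximable (uniformly in $\alpha \in \mathcal{A}_\rho(c)$) by some deterministic Dirichlet polynomial with unimodular coefficients, and the probability that $\mathbb{Y}_\alpha(n)$ simultaneously lie in small arcs around any prescribed unimodular targets is strictly positive.

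First, I would apply Theorem \ref{thm:4.1} with $\epsilon/2$ in place of $\epsilon$ to obtain $\rho>0$ and an integer $N_0\ge 0$, both depending only on $c,\epsilon,U$ (the dependence on $f$ enters through the choice of the $N_0$-th member of the nested union), such that for every $N>N_0$ and every $\alpha \in \mathcal{A}_\rho(c)$, there exist complex numbers $\gamma_0^{(0)},\ldots,\gamma_N^{(0)}$ on $S^1$ (depending on $N,\alpha$) with
\begin{gather*}
\left\| f(s)-\sum_{n=0}^{N} \frac{\gamma_n^{(0)}}{(n+\alpha)^s}\right\|
< \frac{\epsilon}{2}.
\end{gather*}

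Next, I would fix $N>N_0$ and $\alpha \in \mathcal{A}_\rho(c)$, and choose $\delta=\delta(N,\alpha,\epsilon)>0$ small enough so that whenever $|\omega_n-\gamma_n^{(0)}|<\delta$ for all $0 \le n \le N$, the triangle inequality yields
\begin{gather*}
\left\|\sum_{n=0}^{N} \frac{\omega_n}{(n+\alpha)^s}-\sum_{n=0}^{N}\frac{\gamma_n^{(0)}}{(n+\alpha)^s}\right\|
\leq \delta \sum_{n=0}^{N} \|(n+\alpha)^{-s}\|
< \frac{\epsilon}{2}.
\end{gather*}
Because $U$ is bounded with $\overline{U}\subset D$, each norm $\|(n+\alpha)^{-s}\|$ is finite, so such a $\delta$ exists.

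Finally, I would consider the event
\begin{gather*}
\Omega_0=\bigcap_{n=0}^{N} \bigl\{\omega\in\Omega \mid |\mathbb{Y}_\alpha(n)(\omega)-\gamma_n^{(0)}|<\delta\bigr\}.
\end{gather*}
Since $\mathbb{Y}_\alpha(0),\ldots,\mathbb{Y}_\alpha(N)$ are independent and each is uniformly distributed on $S^1$ (as established at the end of Section \ref{sec:2.1}), and since each arc on $S^1$ of width $>0$ has positive measure under $\mathbf{m}$, we have $\mathbf{P}(\Omega_0)=\prod_{n=0}^{N}\mathbf{m}(A_n)>0$, where $A_n$ is the arc of width $2\arcsin(\delta/2)$ centred at $\gamma_n^{(0)}$. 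On $\Omega_0$ the triangle inequality combining the previous two displays gives $\|\zeta_N(s,\mathbb{Y}_\alpha)-f(s)\|<\epsilon$, so $\mathbf{P}(\|\zeta_N(s,\mathbb{Y}_\alpha)-f(s)\|<\epsilon)\ge \mathbf{P}(\Omega_0)>0$, as required.

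There is essentially no obstacle here: all the heavy analytic content — the denseness argument via the Hahn--Banach separation theorem, the entire-function-of-exponential-type machinery, and the Dirichlet polynomial perturbation bound — has been absorbed into Theorem \ref{thm:4.1}. The only point to watch is the qualifier that $\rho$ and $N_0$ must depend only on $c,\epsilon,f,U$ and not on $N$ or $\alpha$; this is guaranteed by the uniformity built into the conclusion of Theorem \ref{thm:4.1}, in which $\rho$ is extracted before the intersection over $\alpha \in \mathcal{A}_\rho(c)$.
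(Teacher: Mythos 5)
Your proof is correct, and its backbone is the same as the paper's: everything hinges on Theorem \ref{thm:4.1} applied with $\epsilon/2$, which supplies $\rho$ and $N_0$ (with exactly the dependence claimed) and a unimodular-coefficient polynomial in $\Gamma(\alpha,N)$ within $\epsilon/2$ of $f$, after which one only needs positive probability that $\zeta_N(s,\mathbb{Y}_\alpha)$ lands near that polynomial. Where you differ is in this last probabilistic step: the paper identifies $\supp \zeta_N(s,\mathbb{Y}_\alpha)=\Gamma(\alpha,N)$, using the independence of the $\mathbb{Y}_\alpha(n)$ together with a general result on supports of sums of independent random elements (Kowalski, Proposition B.10.8) and the closedness of $\Gamma(\alpha,N)$ as a continuous image of the compact torus, and then invokes the definition of support. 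You instead build an explicit event — each $\mathbb{Y}_\alpha(n)$ confined to a small arc around the target coefficient $\gamma_n^{(0)}$ — and bound the resulting perturbation of the Dirichlet polynomial by $\delta \sum_{n=0}^{N}\|(n+\alpha)^{-s}\|$, which is finite and makes the choice of $\delta$ legitimate; independence and uniform distribution of the $\mathbb{Y}_\alpha(n)$ then give $\mathbf{P}(\Omega_0)>0$. Your route is more elementary and quantitative (no support machinery needed), while the paper's is slightly more conceptual; both are sound, and your dependence bookkeeping ($\rho,N_0$ only on $c,\epsilon,f,U$; $\delta$ allowed to depend on $N,\alpha$) is handled correctly. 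One trivial slip: the condition $|\omega_n-\gamma_n^{(0)}|<\delta$ on $S^1$ corresponds to an open arc of total angular width $4\arcsin(\delta/2)$ (half-width $2\arcsin(\delta/2)$), not width $2\arcsin(\delta/2)$; this has no effect on the argument, since all that matters is that the arc has positive $\mathbf{m}$-measure.
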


\begin{proof}
Recall that the random variables $\mathbb{Y}_\alpha(n_1), \ldots, \mathbb{Y}_\alpha(n_k)$ are independent for any distinct integers $n_1,\ldots,n_k \geq0$. 
Indeed, we derive by Fubini's theorem that
\begin{align*}
&\mathbf{E} [\phi_1(\mathbb{Y}_\alpha(n_1)) \cdots \phi_k(\mathbb{Y}_\alpha(n_k))] \\
&= \int_{\Omega_2} \prod_{j=1}^{k} \phi_j(\mathbb{Y}_\alpha(n_j)) \,d \mathbf{P}_2 
= \prod_{j=1}^{k} \int_{S^1} \phi_j(\omega_{n_j}) \,d \mathbf{m} (\omega_{n_j}) 
= \prod_{j=1}^{k} \mathbf{E} [\phi_j(\mathbb{Y}_\alpha(n_j))]
\end{align*}
for any measurable functions $\phi_1, \ldots, \phi_k$ on $S^1$. 
Hence, the support of $\zeta_N(s,\mathbb{Y}_\alpha)$ is calculated as
\begin{gather*}
\supp \zeta_N(s,\mathbb{Y}_\alpha)
= \overline{\sum_{n=0}^{N} \supp \left(\frac{\mathbb{Y}_\alpha(n)}{(n+\alpha)^s}\right)}
\end{gather*}
by \cite[Proposition B.10.8]{Kowalski2021}, where $\sum_{n=0}^{N} S_n$ denotes the set of all points $x_0+\cdots+x_N$ with $x_n \in S_n$ for $0 \leq n \leq N$. 
We have 
\begin{gather*}
\supp \left(\frac{\mathbb{Y}_\alpha(n)}{(n+\alpha)^s}\right)
= \left\{ \frac{\gamma_n}{(n+\alpha)^s} ~\middle|~ |\gamma_n|=1 \right\}
\end{gather*}
for any $0 \leq n \leq N$. 
Therefore, by the definition of the set $\Gamma(\alpha,N)$, we derive that $\supp \zeta_N(s,\mathbb{Y}_\alpha)=\overline{\Gamma(\alpha,N)}$. 
We see that $\Gamma(\alpha,N)$ is closed in the space $A^2(U)$. 
Indeed, the continuous map $\psi: \prod_{n=0}^{N} S^1 \to A^2(U)$ defined as
\begin{gather*}
(\gamma_0, \ldots, \gamma_N) \mapsto \sum_{n=0}^{N} \frac{\gamma_n}{(n+\alpha)^s}
\end{gather*}
is a closed map since $\prod_{n=0}^{N} S^1$ is compact and $A^2(U)$ is Hausdorff.  
Hence 
\begin{gather}\label{eq:04212018}
\supp \zeta_N(s,\mathbb{Y}_\alpha)
= \Gamma(\alpha,N)
\end{gather}
follows. 
Let $0<c<1$ and $f \in A^2(U)$. 
By Theorem \ref{thm:4.1}, there exist a positive real number $\rho$ and an integer $N_0 \geq 0$ such that $f \in \Gamma(\alpha,N)^{(\epsilon/2)}$ for any $N>N_0$ and any $\alpha \in \mathcal{A}_\rho(c)$. 
Here, $\rho$ and $N_0$ depend at most on $c,\epsilon,f,U$. 
Then, by \eqref{eq:04212018}, there exists an element $g \in \supp \zeta_N(s,\mathbb{Y}_\alpha)$ such that $\|f-g\|<\epsilon/2$. 
Note that the condition $g \in \supp \zeta_N(s,\mathbb{Y}_\alpha)$ implies 
\begin{gather*}
\mathbf{P} \left(\|\zeta_N(s,\mathbb{Y}_\alpha)-g(s)\|<\frac{\epsilon}{2}\right)
> 0. 
\end{gather*}
Therefore, we obtain
\begin{gather*}
\mathbf{P} \left(\|\zeta_N(s,\mathbb{Y}_\alpha)-f(s)\|<\epsilon\right)
\geq \mathbf{P} \left(\|\zeta_N(s,\mathbb{Y}_\alpha)-g(s)\|<\frac{\epsilon}{2}\right)
> 0,  
\end{gather*}
and the proof is completed. 
\end{proof}

\section{Proof of the main result}\label{sec:5}
Let $K$ be a compact subset of the strip $D$ with connected complement. 
Then there exists a bounded domain $U$ such that $K \subset U$ and $\overline{U} \subset D$, whose boundary $\partial U$ is a Jordan curve. 
In the following, we fix such a domain $U$, and put 
\begin{gather*}
\sigma_0
= \min \{\RE(s) \mid s \in \overline{U}\}. 
\end{gather*}
Then we have $1/2<\sigma_0<1$. 
Let $\omega_0 \in \Omega$ be any sample. 
Denote by $\theta_n \in [0,2\pi)$ the argument of the value $\mathbb{Y}_\alpha(n)(\omega_0) \in S^1$ for $0 \leq n \leq N$. 
Define the event $\Omega_0$ as
\begin{align}\label{eq:04222013}
\Omega_0
&= \Omega_0(\delta; N,\alpha,\omega_0) \\
&= \left\{\omega \in \Omega ~\middle|~ 
\text{$\mathbb{X}_\alpha(n)(\omega) \in A(\theta_n-\pi \delta,\theta_n+\pi \delta)$ for $0 \leq n \leq N$}\right\} \nonumber
\end{align}
for $0<\delta<1$, where $A(s,t)$ is the arc of $S^1$ as in \eqref{eq:04212057}. 
For any $\mathbb{C}$-valued random variable $\mathcal{X}$ defined on the probability space $(\Omega, \mathcal{F}, \mathbf{P})$, we define
\begin{gather*}
\mathbf{E}_{\Omega_0}[\mathcal{X}]
= \int_{\Omega_0} \mathcal{X} \,d\mathbf{P}. 
\end{gather*}
Before proceeding to the proof of Theorem \ref{thm:1.4}, we study the conditional square mean value $\mathbf{E}_{\Omega_0}\left[\|\zeta(s,\mathbb{X}_\alpha)-\zeta_N(s,\mathbb{X}_\alpha)\|^2\right]$ for $\alpha \in \mathcal{A} \setminus \mathcal{E}$, where $\mathcal{E}$ is a certain finite subset chosen suitably.

\subsection{Results on the Beurling--Selberg functions}\label{sec:5.1}
Let $\mathbf{1}_{(s,t)}$ denote the indicator function of an open interval $(s,t) \subset \mathbb{R}$. 
The Beurling--Selberg functions present a nice approximation of $\mathbf{1}_{(s,t)}$. 
Here, we collect several results used later. 
See \cite{Vaaler1985} for the details of proofs. 
First, we define the functions $H(z)$ and $K(z)$ as
\begin{gather*}
H(z)
= \left(\frac{\sin \pi z}{\pi}\right)^2 
\left\{ \sum_{m=-\infty}^{\infty} \frac{\sgn(m)}{(z-m)^2} +\frac{2}{z} \right\} 
\quad\text{and}\quad
K(z)
= \left(\frac{\sin \pi z}{\pi z}\right)^2 
\end{gather*}
for $z \in \mathbb{C}$. 
They are entire functions of exponential type such that
\begin{gather*}
\limsup_{r \to\infty} \frac{\log|H(re^{i \theta})|}{r}
\leq 2\pi, 
\qquad
\limsup_{r \to\infty} \frac{\log|K(re^{i \theta})|}{r}
\leq 2\pi
\end{gather*}
uniformly in $\theta \in \mathbb{R}$. 
It is known that the inequalities
\begin{gather}\label{eq:042312137}
|\sgn(x)-H(x)|
\leq K(x)
\quad\text{and}\quad
|H(x)|
\leq 1 
\end{gather}
hold for any $x \in \mathbb{R}$. 
Let $s, t, \Delta \in \mathbb{R}$ with $s<t$ and $\Delta>1$. 
Then we define 
\begin{align*}
U_{s,t}(z, \Delta)
&= \frac{1}{2} \left\{ H \left(\frac{\Delta}{2\pi}(z-s)\right) + H \left(\frac{\Delta}{2\pi}(t-z)\right) \right\}, \\
K_{s,t}(z, \Delta)
&= \frac{1}{2} \left\{ K \left(\frac{\Delta}{2\pi}(z-s)\right) + K \left(\frac{\Delta}{2\pi}(t-z)\right) \right\} 
\end{align*}
for $z \in \mathbb{C}$. 
We deduce from \eqref{eq:042312137} the inequality
\begin{gather}\label{eq:04232220}
|\mathbf{1}_{(s,t)}(x) - U_{s,t}(x, \Delta)| 
\leq K_{s,t}(x, \Delta)
\end{gather}
for any $x \in \mathbb{R}$. 
Define the Fourier transforms $\widetilde{U}_{s,t}(\xi,\Delta)$ and $\widetilde{K}_{s,t}(\xi,\Delta)$ as 
\begin{gather*}
\widetilde{U}_{s,t}(\xi,\Delta)
= \int_{\mathbb{R}} U_{s,t}(x, \Delta) e^{-ix \xi} \,dx, 
\qquad
\widetilde{K}_{s,t}(\xi,\Delta)
= \int_{\mathbb{R}} K_{s,t}(x, \Delta) e^{-ix \xi} \,dx
\end{gather*}
for $\xi \in \mathbb{R}$. 
The Paley--Wiener theorem \cite[Theorem 19.3]{Rudin1987} yields that $\widetilde{U}_{s,t}(\xi, \Delta)=\widetilde{K}_{s,t}(\xi, \Delta)=0$ for any $|\xi|>\Delta$ since $U_{s,t}(z, \Delta)$ and $K_{s,t}(z, \Delta)$ are entire functions of exponential type such that
\begin{gather*}
\limsup_{r \to\infty} \frac{\log|U_{s,t}(r e^{i \theta}, \Delta)|}{r}
\leq \Delta, 
\qquad
\limsup_{r \to\infty} \frac{\log|K_{s,t}(r e^{i \theta}, \Delta)|}{r}
\leq \Delta
\end{gather*}
uniformly in $\theta \in \mathbb{R}$. 
Then, we define the functions $\mathscr{U}_{s,t}(z, \Delta)$ and $\mathscr{K}_{s,t}(z, \Delta)$ as
\begin{gather*}
\mathscr{U}_{s,t}(z, \Delta)
= \frac{1}{2\pi} \sum_{|m| \leq \Delta} \widetilde{U}_{s,t}(m, \Delta) z^m, 
\qquad
\mathscr{K}_{s,t}(z, \Delta)
= \frac{1}{2\pi} \sum_{|m| \leq \Delta} \widetilde{K}_{s,t}(m, \Delta) z^m 
\end{gather*}
for $z \in S^1$. 
We have the Fourier series representation
\begin{align}
\sum_{k \in \mathbb{Z}} U_{s,t}(\theta+2k \pi, \Delta)
&= \frac{1}{2\pi} \sum_{m \in \mathbb{Z}} \widetilde{U}_{s,t}(m, \Delta) e^{im \theta}
= \mathscr{U}_{s,t}(e^{i \theta}, \Delta), \label{eq:04232221}\\
\sum_{k \in \mathbb{Z}} K_{s,t}(\theta+2k \pi, \Delta)
&= \frac{1}{2\pi} \sum_{m \in \mathbb{Z}} \widetilde{K}_{s,t}(m, \Delta) e^{im \theta}
= \mathscr{K}_{s,t}(e^{i \theta}, \Delta) \label{eq:04232222}
\end{align}
for any $\theta \in \mathbb{R}$. 
Thus $\mathscr{U}_{s,t}(z, \Delta)$ and $\mathscr{K}_{s,t}(z, \Delta)$ are real valued functions. 

\begin{lemma}\label{lem:5.1}
Let $N \geq0$ be any integer. 
Let $s_n,t_n \in \mathbb{R}$ for $0 \leq n \leq N$ satisfying $0<t_n-s_n \leq 2\pi$. 
Let $\Delta>3$. 
Then we have
\begin{gather*}
\left| \prod_{n=0}^{N} \mathbf{1}_{A(s_n,t_n)}(z_n) 
- \prod_{n=0}^{N} \mathscr{U}_{s_n,t_n}(z_n, \Delta) \right| 
\ll (\log{\Delta})^{N+1} \sum_{n=0}^{N} \mathscr{K}_{s_n,t_n}(z_n, \Delta) 
\end{gather*}
for any $(z_0,\ldots,z_N) \in \prod_{n=0}^{N} S^1$, where $\mathbf{1}_{A(s,t)}$ denotes the indicator function of the arc $A(s,t)$ as in \eqref{eq:04212057}. 
Here, the implied constant is absolute. 
\end{lemma}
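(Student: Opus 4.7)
The plan is to reduce the lemma to a one-factor version by a standard multiplicative telescoping, and to prove the one-factor case by unfolding the arc indicator on $S^1$ into a sum of translated interval indicators on $\mathbb{R}$. First, since $0 < t_n - s_n \leq 2\pi$, at most one $k \in \mathbb{Z}$ satisfies $\theta + 2k\pi \in (s_n, t_n)$, so
\begin{gather*}
\mathbf{1}_{A(s_n, t_n)}(e^{i\theta}) = \sum_{k \in \mathbb{Z}} \mathbf{1}_{(s_n, t_n)}(\theta + 2k\pi)
\end{gather*}
for every $\theta \in \mathbb{R}$. Combining this identification with the pointwise Beurling--Selberg estimate \eqref{eq:04232220} and the Fourier-series representations \eqref{eq:04232221}, \eqref{eq:04232222}, summation over $k$ and the triangle inequality yield the one-factor inequality
\begin{gather*}
\big|\mathbf{1}_{A(s, t)}(z) - \mathscr{U}_{s, t}(z, \Delta)\big| \leq \mathscr{K}_{s, t}(z, \Delta)
\end{gather*}
for any $z \in S^1$.

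Next I would establish the uniform bound $|\mathscr{U}_{s, t}(z, \Delta)| \ll \log \Delta$ for $z \in S^1$ and $\Delta > 3$, with absolute implied constant. The natural approach is Fourier-analytic: from the definition
\begin{gather*}
|\mathscr{U}_{s, t}(z, \Delta)| \leq \frac{1}{2\pi} \sum_{|m| \leq \Delta} \big|\widetilde{U}_{s, t}(m, \Delta)\big|,
\end{gather*}
I decompose $\widetilde{U}_{s, t}(m, \Delta) = \widetilde{\mathbf{1}_{(s,t)}}(m) + \widetilde{(U_{s, t} - \mathbf{1}_{(s,t)})}(m)$. The first Fourier coefficient satisfies the classical bound $|\widetilde{\mathbf{1}_{(s,t)}}(m)| \leq \min(2\pi, 2/|m|)$, which sums to $O(\log \Delta)$ over $|m| \leq \Delta$; the second is uniformly majorised by $\int K_{s, t}(x, \Delta)\,dx = 2\pi/\Delta$ thanks to \eqref{eq:04232220}, which contributes $O(1)$ after summing $2\Delta + 1$ terms. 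This is the main technical step: the easy pointwise bound $|\mathscr{U}_{s, t}(z, \Delta)| \leq 1 + \mathscr{K}_{s, t}(z, \Delta) = O(1)$ is not what one wants, because using it in the telescoping would produce an exponential-in-$N$ factor in place of the cleaner $(\log \Delta)^{N+1}$ scaling stated in the lemma.

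Finally I carry out the telescoping. Setting $a_n = \mathbf{1}_{A(s_n, t_n)}(z_n)$ and $b_n = \mathscr{U}_{s_n, t_n}(z_n, \Delta)$, the identity
\begin{gather*}
\prod_{n=0}^{N} a_n - \prod_{n=0}^{N} b_n = \sum_{k=0}^{N} \Big(\prod_{n < k} a_n\Big)(a_k - b_k)\Big(\prod_{n > k} b_n\Big),
\end{gather*}
together with $|a_n| \leq 1$, the bound from the previous paragraph on $|b_n|$, and the one-factor inequality for $|a_k - b_k|$, gives
\begin{gather*}
\bigg|\prod_{n=0}^{N} a_n - \prod_{n=0}^{N} b_n\bigg| \ll \sum_{k=0}^{N} \mathscr{K}_{s_k, t_k}(z_k, \Delta) (\log \Delta)^{N - k}.
\end{gather*}
Since $\Delta > 3$ forces $\log \Delta > \log 3 > 1$, each factor $(\log \Delta)^{N - k}$ is majorised by $(\log \Delta)^{N+1}$ up to an absolute constant, and summing over $k$ completes the proof.
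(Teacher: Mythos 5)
Your proposal is correct, and its skeleton coincides with the paper's: the same unfolding of the arc indicator into translates, the same use of \eqref{eq:04232220}--\eqref{eq:04232222} to get the one-factor bound $|\mathbf{1}_{A(s,t)}(z)-\mathscr{U}_{s,t}(z,\Delta)|\leq\mathscr{K}_{s,t}(z,\Delta)$, and the same strategy of combining that with a bound $|\mathscr{U}_{s,t}(z,\Delta)|\ll\log\Delta$ to control the product; your telescoping identity is just the unrolled form of the paper's induction on $N$, so that difference is cosmetic. Where you genuinely diverge is in the proof of $|\mathscr{U}_{s,t}(z,\Delta)|\ll\log\Delta$: the paper bounds $\widetilde{U}_{s,t}(m,\Delta)\ll 1/|m|$ for $m\neq0$ by integrating by parts and invoking Vaaler's derivative estimate $H'(x)\ll(1+|x|)^{-3}$ from \cite{Vaaler1985}, whereas you split $U_{s,t}=\mathbf{1}_{(s,t)}+(U_{s,t}-\mathbf{1}_{(s,t)})$, use the elementary bound $|\widetilde{\mathbf{1}_{(s,t)}}(m)|\leq\min(2\pi,2/|m|)$, and absorb the error term through $\int_{\mathbb{R}}K_{s,t}(x,\Delta)\,dx=2\pi/\Delta$ via \eqref{eq:04232220}. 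Your route is more elementary and self-contained (no derivative estimate for $H$ is needed), at the cost of not giving decay of the individual coefficients $\widetilde{U}_{s,t}(m,\Delta)$ beyond what the interval contributes; for the purposes of this lemma the two are interchangeable. One shared caveat: in your final display the constant implicit in $\ll$ accumulates a factor roughly $C^{N}$ from the repeated use of $|\mathscr{U}_{s_n,t_n}(z_n,\Delta)|\leq C\log\Delta$, exactly as in the paper's inductive step; this is harmless for the later applications, where $N$ is fixed before $\Delta$ is taken large, so it does not constitute a gap relative to the paper's own argument.
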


\begin{proof}
Let $0 \leq n \leq N$. 
Note that the formula
\begin{gather*}
\mathbf{1}_{A(s_n,t_n)}(e^{i \theta})
= \sum_{k \in \mathbb{Z}} \mathbf{1}_{(s_n,t_n)}(\theta+2k \pi) 
\end{gather*}
holds for any $\theta \in \mathbb{R}$. 
Hence we have
\begin{align}\label{eq:04212235}
&\left|\mathbf{1}_{A(s_n,t_n)}(e^{i \theta}) - \mathscr{U}_{s_n,t_n}(e^{i \theta}, \Delta) \right| \\
&\leq \sum_{k \in \mathbb{Z}} \left|\mathbf{1}_{A(s_n,t_n)}(\theta+2k \pi)-U_{s_n,t_n}(\theta+2k \pi, \Delta) \right| 
\leq \mathscr{K}_{s_n,t_n}(e^{i \theta}, \Delta) \nonumber
\end{align}
by \eqref{eq:04232220}, \eqref{eq:04232221}, and \eqref{eq:04232222}. 
Then, we prove the desired estimate by induction on $N$. 
Notice that \eqref{eq:04212235} derives the result for $N=0$. 
Furthermore, we obtain
\begin{align*}
&\left| \prod_{n=0}^{N+1} \mathbf{1}_{A(s_n,t_n)}(z_n) 
- \prod_{n=0}^{N+1} \mathscr{U}_{s_n,t_n}(z_n, \Delta) \right| \\
&\leq \prod_{n=0}^{N} \mathbf{1}_{A(s_n,t_n)}(z_n) 
\cdot \Big| \mathbf{1}_{A(s_{N+1},t_{N+1})}(z_{N+1}) - \mathscr{U}_{s_{N+1},t_{N+1}}(z_{N+1}, \Delta) \Big| \\
&\qquad
+ \left| \prod_{n=0}^{N} \mathbf{1}_{A(s_n,t_n)}(z_n) 
- \prod_{n=0}^{N} \mathscr{U}_{s_n,t_n}(z_n, \Delta) \right| 
\cdot \Big|\mathscr{U}_{s_{N+1},t_{N+1}}(z_{N+1}, \Delta)\Big| 
\end{align*}
for any $(z_0,\ldots,z_{N+1}) \in \prod_{n=0}^{N+1} S^1$. 
By the inductive assumption, it yields 
\begin{align}\label{eq:04212343}
&\left| \prod_{n=0}^{N+1} \mathbf{1}_{A(s_n,t_n)}(z_n) 
- \prod_{n=0}^{N+1} \mathscr{U}_{s_n,t_n}(z_n, \Delta) \right| \\
&\ll \mathscr{K}_{s_{N+1},t_{N+1}}(z_{N+1}, \Delta) \nonumber\\
&\qquad
+ (\log{\Delta})^{N+1} \sum_{n=0}^{N} \mathscr{K}_{s_n,t_n}(z_n, \Delta) 
\cdot \Big|\mathscr{U}_{s_{N+1},t_{N+1}}(z_{N+1}, \Delta)\Big|. \nonumber
\end{align}
By the definition of the function $\mathscr{U}_{s,t}(z, \Delta)$, we see that
\begin{gather}\label{eq:04212341}
\Big|\mathscr{U}_{s,t}(z, \Delta)\Big|
\leq \frac{1}{2\pi} \sum_{|m| \leq \Delta} \left|\widetilde{U}_{s,t}(m, \Delta)\right|. 
\end{gather}
for any $s,t \in \mathbb{R}$ with $0<t-s \leq 2\pi$ and any $z \in S^1$. 
For $m=0$, we have
\begin{gather*}
\widetilde{U}_{s,t}(0, \Delta)
= \int_{\mathbb{R}} U_{s,t}(x, \Delta) \,dx
\leq \int_{\mathbb{R}} \mathbf{1}_{(s,t)}(x) \,dx 
+ \int_{\mathbb{R}} K_{s,t}(x, \Delta) \,dx
\end{gather*}
by \eqref{eq:04232220}. 
Here, the first integral of the right-hand side is bounded by $2\pi$, and the second integral is 
\begin{gather}\label{eq:04220117}
\int_{\mathbb{R}} K_{s,t}(x, \Delta) \,dx
= \int_{\mathbb{R}} K \left(\frac{\Delta x}{2\pi} \right) \,dx
= \frac{2\pi}{\Delta} \int_{\mathbb{R}} \left(\frac{\sin \pi x}{\pi x}\right)^2 \,dx
\ll \frac{1}{\Delta}
\end{gather}
by the definition of $K_{s,t}(x, \Delta)$. 
Hence we have $\widetilde{U}_{s,t}(0, \Delta) \ll 1$ with an absolute implied constant. 
For $m \neq 0$, we obtain
\begin{gather*}
\widetilde{U}_{s,t}(m, \Delta)
= -\frac{1}{im} \int_{\mathbb{R}} \left(\frac{\partial}{\partial x} U_{s,t}(x, \Delta)\right) e^{-imx} \,dx
\end{gather*}
by integrating by parts. 
Here, we apply the estimate $H'(x) \ll (1+|x|)^{-3}$ proved in \cite[Theorem 6]{Vaaler1985}. 
By the definition of $U_{s,t}(x, \Delta)$, it yields
\begin{gather*}
\frac{\partial}{\partial x} U_{s,t}(x, \Delta)
\ll \frac{\Delta}{(1+\Delta|x-s|)^3} + \frac{\Delta}{(1+\Delta|x-t|)^3} 
\end{gather*}
uniformly for $x \in \mathbb{R}$. 
Hence we have $\widetilde{U}_{s,t}(m, \Delta) \ll 1/|m|$ with an absolute implied constant. 
By \eqref{eq:04212341}, we obtain
\begin{gather*}
\Big|\mathscr{U}_{s,t}(z, \Delta)\Big|
\ll 1+\sum_{0<m \leq \Delta} \frac{1}{m}
\ll \log{\Delta} 
\end{gather*}
for any $s,t \in \mathbb{R}$ with $0<t-s \leq 2\pi$ and any $z \in S^1$. 
Inserting this to \eqref{eq:04212343}, we derive
\begin{align*}
&\left| \prod_{n=0}^{N+1} \mathbf{1}_{A(s_n,t_n)}(z_n) 
- \prod_{n=0}^{N+1} \mathscr{U}_{s_n,t_n}(z_n, \Delta) \right| \\
&\ll \mathscr{K}_{s_{N+1},t_{N+1}}(z_{N+1}, \Delta)
+ (\log{\Delta})^{N+2} \sum_{n=0}^{N} \mathscr{K}_{s_n,t_n}(z_n, \Delta) \\
&\ll (\log{\Delta})^{N+2} \sum_{n=0}^{N+1} \mathscr{K}_{s_n,t_n}(z_n, \Delta).  
\end{align*}
This is the result for $N+1$, and hence the proof is completed. 
\end{proof}

\subsection{Estimate of a conditional square mean value}\label{sec:5.2}
By the definition of the event $\Omega_0$ as in \eqref{eq:04222013}, the indicator function $\mathbf{1}_{\Omega_0}$ is represented as
\begin{gather*}
\mathbf{1}_{\Omega_0}
= \prod_{n=0}^{N} \mathbf{1}_{A(s_n,t_n)}(\mathbb{X}_\alpha(n)), 
\end{gather*}
where we put $s_n=\theta_n-\pi \delta$ and $t_n=\theta_n+\pi \delta$. 
Hence Lemma \ref{lem:5.1} yields 
\begin{gather}\label{eq:04220048}
\mathbf{1}_{\Omega_0}
= \prod_{n=0}^{N} \mathscr{U}_{s_n,t_n}(\mathbb{X}_\alpha(n), \Delta) 
+ O\left( (\log{\Delta})^{N+1} \sum_{n=0}^{N} \mathscr{K}_{s_n,t_n}(\mathbb{X}_\alpha(n), \Delta) \right), 
\end{gather}
where the implied constant is absolute. 
Using this asymptotic formula, we prove the following result which plays an important role in the remaining part of the proof of Theorem \ref{thm:1.4}. 

\begin{proposition}\label{prop:5.2}
Let $0<c<1$ and $0<\delta<1/2$. 
Let $\omega_0 \in \Omega$ be any sample. 
For any $L>N \geq1$ and any $\Delta>\Delta_0$ with some absolute constant $\Delta_0$, there exists a finite subset $\mathcal{E} \subset \mathcal{A}$ such that 
\begin{gather}
\mathbf{P}(\Omega_0)
= \delta^{N+1}+O\left(\frac{N(\log{\Delta})^{N+1}}{\Delta}\right), \label{eq:04220227}\\
\mathbf{E}_{\Omega_0}\left[\|\zeta(s,\mathbb{X}_\alpha)-\zeta_N(s,\mathbb{X}_\alpha)\|^2\right] 
\ll \mathbf{P}(\Omega_0) N^{1-2\sigma_0}
+ L^{1-2\sigma_0}
+ \frac{NL (\log{\Delta})^{N+1}}{\Delta} \label{eq:04220228}
\end{gather}
for any $\alpha \in \mathcal{A} \setminus \mathcal{E}$, where $\mathcal{E}$ depends only on $N$, $L$, and $\Delta$. 
The implied constants depend only on the compact subset $K$. 
\end{proposition}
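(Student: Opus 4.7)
The plan is to apply the Beurling--Selberg asymptotic formula \eqref{eq:04220048} to $\mathbf{1}_{\Omega_0}$, thereby reducing the evaluation of $\mathbf{P}(\Omega_0)$ and of the conditional moments $\mathbf{E}_{\Omega_0}[\mathbb{X}_\alpha(n)\overline{\mathbb{X}_\alpha(m)}]$ to mixed moments of the form $\mathbf{E}\bigl[\prod_j \mathbb{X}_\alpha(n_j)^{k_j}\bigr]$, which by Lemma \ref{lem:2.1} vanish unless $\prod_j(n_j+\alpha)^{k_j}=1$. For each non-trivial exponent tuple this is a non-constant algebraic equation in $\alpha$ with only finitely many roots, and only boundedly many tuples appear for given $N$, $L$, and $\Delta$, so excluding a finite set $\mathcal{E}$ will leave only the trivial relation surviving. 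The $\mathscr{K}$-error in \eqref{eq:04220048} will be controlled by the fact that each $\mathbb{X}_\alpha(n)$ is uniformly distributed on $S^1$ together with $\int K_{s,t}(x,\Delta)\,dx \ll \Delta^{-1}$, which gives $\mathbf{E}[\mathscr{K}_{s_n,t_n}(\mathbb{X}_\alpha(n),\Delta)] \ll \Delta^{-1}$.

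For \eqref{eq:04220227}, I take expectations in \eqref{eq:04220048}. Expanding $\prod_n \mathscr{U}_{s_n,t_n}(\mathbb{X}_\alpha(n),\Delta)$ via its Fourier series and applying Lemma \ref{lem:2.1} term by term, the analysis above yields a finite set $\mathcal{E}_1 = \mathcal{E}_1(N,\Delta)$ outside which only the zero exponent tuple survives. The resulting main term is $\prod_{n=0}^N \widetilde{U}_{s_n,t_n}(0,\Delta)/(2\pi)$, and \eqref{eq:04232220} together with $\int K \ll \Delta^{-1}$ gives $\widetilde{U}_{s_n,t_n}(0,\Delta)/(2\pi) = \delta + O(\Delta^{-1})$, so the product equals $\delta^{N+1} + O(N/\Delta)$. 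The $\mathscr{K}$-error term contributes $O(N(\log\Delta)^{N+1}/\Delta)$, and summing the two gives \eqref{eq:04220227}.

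For \eqref{eq:04220228}, I split $\zeta(s,\mathbb{X}_\alpha) - \zeta_N(s,\mathbb{X}_\alpha) = A(s)+B(s)$ where $A$ collects the terms with $N<n\leq L$ and $B$ the tail $n>L$, and use $\|A+B\|^2\leq 2\|A\|^2+2\|B\|^2$. The tail is handled unconditionally: $\mathbf{E}_{\Omega_0}[\|B\|^2]\leq \mathbf{E}[\|B\|^2]$, and the orthogonality $\mathbf{E}[\mathbb{X}_\alpha(n)\overline{\mathbb{X}_\alpha(m)}]=\delta_{nm}$ from Lemma \ref{lem:2.1} yields $\ll L^{1-2\sigma_0}$. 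For $\|A\|^2$ I expand the squared norm: the diagonal $n=m$ contributes exactly $\mathbf{P}(\Omega_0)\sum_{N<n\leq L}|n+\alpha|^{-2\sigma}$ (since $|\mathbb{X}_\alpha(n)|^2=1$), giving $\ll \mathbf{P}(\Omega_0) N^{1-2\sigma_0}$ after integration over $U$. For $n\neq m$ I apply \eqref{eq:04220048} again inside $\mathbf{E}_{\Omega_0}[\mathbb{X}_\alpha(n)\overline{\mathbb{X}_\alpha(m)}]$; the Fourier-expansion main term requires $\prod_{n'=0}^N(n'+\alpha)^{k_{n'}}(n+\alpha)(m+\alpha)^{-1}=1$, which is non-trivial because $n\neq m$, so collecting the finitely many roots over the finitely many tuples $(k_{n'}; n,m)$ produces a further finite set $\mathcal{E}_2 = \mathcal{E}_2(N,L,\Delta)$ outside which this main term vanishes. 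Only the $\mathscr{K}$-error survives off-diagonal, bounded by $|\mathbf{E}_{\Omega_0}[\mathbb{X}_\alpha(n)\overline{\mathbb{X}_\alpha(m)}]|\ll N(\log\Delta)^{N+1}/\Delta$; integrating against $|n+\alpha|^{-\sigma}|m+\alpha|^{-\sigma}$ over $U$ and using $L^{2-2\sigma_0}\leq L$ (since $\sigma_0>1/2$) gives the stated $NL(\log\Delta)^{N+1}/\Delta$. Taking $\mathcal{E}=\mathcal{E}_1\cup\mathcal{E}_2$ completes the argument. The essential point---and the main obstacle to being careless with estimates---is the identification of $\mathcal{E}$: one must ensure that across all bounded-exponent, bounded-support multiplicative relations only finitely many $\alpha \in \mathcal{A}$ are exceptional, which works precisely because each single non-trivial relation is a polynomial identity in $\alpha$ and only finitely many such relations arise once $N$, $L$, and $\Delta$ are fixed.
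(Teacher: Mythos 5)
Your proposal is correct and follows essentially the same route as the paper: apply the Beurling--Selberg expansion \eqref{eq:04220048} to $\mathbf{1}_{\Omega_0}$, use Lemma \ref{lem:2.1} to kill all non-trivial Fourier exponent tuples outside a finite exceptional set $\mathcal{E}_1\cup\mathcal{E}_2$ (possible because, with $n_1,n_2>N$ distinct, each surviving multiplicative relation is a non-constant rational equation in $\alpha$ and only finitely many relations occur for fixed $N,L,\Delta$), and split $\zeta-\zeta_N$ into the range $N<n\le L$ (diagonal giving $\mathbf{P}(\Omega_0)N^{1-2\sigma_0}$, off-diagonal giving $NL(\log\Delta)^{N+1}/\Delta$) plus the unconditional tail $n>L$ giving $L^{1-2\sigma_0}$. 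This matches the paper's argument, including the treatment of $\widetilde{U}_{s_n,t_n}(0,\Delta)=2\pi\delta+O(\Delta^{-1})$ and the bound $\mathbf{E}[\mathscr{K}_{s_n,t_n}(\mathbb{X}_\alpha(n),\Delta)]\ll\Delta^{-1}$.
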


\begin{proof}
First, we evaluate the probability $\mathbf{P}(\Omega_0)$. 
Using \eqref{eq:04220048}, we obtain the formula
\begin{align}\label{eq:04240203}
\mathbf{P}(\Omega_0) 
&= \mathbf{E} \left[\prod_{n=0}^{N} \mathscr{U}_{s_n,t_n}(\mathbb{X}_\alpha(n), \Delta) \right] \\
&\qquad
+ O\left( (\log{\Delta})^{N+1} \sum_{n=0}^{N} \mathbf{E} \left[\mathscr{K}_{s_n,t_n}(\mathbb{X}_\alpha(n), \Delta)\right] \right). \nonumber
\end{align}
By the definition of the function $\mathscr{U}_{s,t}(z, \Delta)$, the first term of the right-hand side is calculated as
\begin{align}\label{eq:04220115}
&\mathbf{E} \left[\prod_{n=0}^{N} \mathscr{U}_{s_n,t_n}(\mathbb{X}_\alpha(n), \Delta) \right] \\
&= \left(\frac{1}{2\pi}\right)^{N+1} 
\sum_{|m_0| \leq \Delta} \cdots \sum_{|m_N| \leq \Delta}
\prod_{n=0}^{N} \widetilde{U}_{s_n,t_n}(m_n, \Delta) \,
\mathbf{E}\left[\prod_{n=0}^{N} \mathbb{X}_\alpha(n)^{m_n}\right]. \nonumber
\end{align}
Define the set $\mathcal{E}_1$ as
\begin{gather*}
\mathcal{E}_1
= \bigcup_{\substack{(m_0,\ldots,m_N) \in \mathbb{Z}^{N+1} \setminus\{\mathbf{0}\} \\ \forall n,~ |m_n| \leq \Delta}}
\left\{ \alpha \in \mathcal{A} ~\middle|~ \prod_{n=0}^{N}(n+\alpha)^{m_n}=1 \right\}. 
\end{gather*}
Then $\mathcal{E}_1$ is a finite subset of $\mathcal{A}$ which is determined only by $N$ and $\Delta$. 
Suppose $\alpha \in \mathcal{A} \setminus \mathcal{E}_1$. 
By the definition of $\mathcal{E}_1$, we see that $\prod_{n=0}^{N}(n+\alpha)^{m_n}\neq 1$ is satisfied unless $m_0=\cdots=m_N=0$ for any $|m_n| \leq \Delta$. 
Hence Lemma \ref{lem:2.1} yields
\begin{gather*}
\mathbf{E}\left[\prod_{n=0}^{N} \mathbb{X}_\alpha(n)^{m_n}\right]
=
\begin{cases}
1
& \text{if $m_0=\cdots=m_N=0$}
\\
0 
& \text{otherwise}
\end{cases}
\end{gather*}
for any $|m_n| \leq \Delta$. 
Inserting this to \eqref{eq:04220115}, we derive
\begin{gather*}
\mathbf{E} \left[\prod_{n=0}^{N} \mathscr{U}_{s_n,t_n}(\mathbb{X}_\alpha(n), \Delta) \right]
= \left(\frac{1}{2\pi}\right)^{N+1} 
\prod_{n=0}^{N} \widetilde{U}_{s_n,t_n}(0, \Delta). 
\end{gather*}
Then we evaluate $\widetilde{U}_{s_n,t_n}(0, \Delta)$ for $0 \leq n \leq N$. 
Note that $t_n-s_n=2\pi \delta$ is satisfied by the setting of $s_n$ and $t_n$. 
Applying \eqref{eq:04232220}, we have 
\begin{align*}
\widetilde{U}_{s_n,t_n}(0, \Delta)
&= \int_{\mathbb{R}} U_{s_n,t_n}(x, \Delta) \,dx \\
&= \int_{\mathbb{R}} \mathbf{1}_{(s_n,t_n)}(x) \,dx 
+ O\left(\int_{\mathbb{R}} K_{s_n,t_n}(x, \Delta) \,dx\right) \\
&= 2\pi \delta
+ O\left(\frac{1}{\Delta}\right), 
\end{align*}
where the last line follows from \eqref{eq:04220117}. 
As a result, we obtain
\begin{align}\label{eq:04220225}
\mathbf{E} \left[\prod_{n=0}^{N} \mathscr{U}_{s_n,t_n}(\mathbb{X}_\alpha(n), \Delta) \right]
&= \left(\frac{1}{2\pi}\right)^{N+1} 
\prod_{n=0}^{N} \left(2\pi \delta + O\left(\frac{1}{\Delta}\right)\right) \\
&= \delta^{N+1}
+ O\left(\frac{N}{\Delta}\right) \nonumber
\end{align}
for any $\Delta>\Delta_0$ with a large absolute constant $\Delta_0$. 
On the other hand, we have
\begin{gather*}
\mathbf{E}[\mathscr{K}_{s_n,t_n}(\mathbb{X}_\alpha(n), \Delta)]
= \frac{1}{2\pi} \sum_{|m| \leq \Delta} \widetilde{K}_{s_n,t_n}(m, \Delta) 
\mathbf{E}\left[\mathbb{X}_\alpha(n)^m \right]
= \frac{1}{2\pi} \widetilde{K}_{s_n,t_n}(0, \Delta) 
\end{gather*}
since Lemma \ref{lem:2.1} yields $\mathbf{E}\left[\mathbb{X}_\alpha(n)^m \right]=0$ unless $m=0$. 
Furthermore, 
\begin{gather*}
\widetilde{K}_{s_n,t_n}(0, \Delta) 
= \int_{\mathbb{R}} K_{s_n,t_n}(x, \Delta) \,dx
\ll \frac{1}{\Delta} 
\end{gather*}
by \eqref{eq:04220117}. 
Thus $\mathbf{E}[\mathscr{K}_{s_n,t_n}(\mathbb{X}_\alpha(n), \Delta)] \ll 1/\Delta$ follows. 
Then we arrive at
\begin{gather}\label{eq:04220226}
(\log{\Delta})^{N+1} \sum_{n=0}^{N} \mathbf{E} \left[\mathscr{K}_{s_n,t_n}(\mathbb{X}_\alpha(n), \Delta)\right]
\ll \frac{N(\log{\Delta})^{N+1}}{\Delta}. 
\end{gather}
Combining \eqref{eq:04240203}, \eqref{eq:04220225}, and \eqref{eq:04220226}, we see that \eqref{eq:04220227} holds for any $\alpha \in \mathcal{A} \setminus \mathcal{E}_1$. 

Next, we consider the conditional square mean value of \eqref{eq:04220228}. 
We divide it into the following two expected values: 
\begin{align}\label{eq:04240222}
&\mathbf{E}_{\Omega_0}\left[\|\zeta(s,\mathbb{X}_\alpha)-\zeta_N(s,\mathbb{X}_\alpha)\|^2\right] \\
&\leq \mathbf{E}_{\Omega_0}\left[\big(\|\zeta(s,\mathbb{X}_\alpha)-\zeta_L(s,\mathbb{X}_\alpha)\|
+ \|\zeta_L(s,\mathbb{X}_\alpha)-\zeta_N(s,\mathbb{X}_\alpha)\|\big)^2\right] \nonumber\\
&\leq 2\mathbf{E}_{\Omega_0}\left[\|\zeta(s,\mathbb{X}_\alpha)-\zeta_L(s,\mathbb{X}_\alpha)\|^2\right]
+ 2\mathbf{E}_{\Omega_0}\left[\|\zeta_L(s,\mathbb{X}_\alpha)-\zeta_N(s,\mathbb{X}_\alpha)\|^2\right]. \nonumber
\end{align}
The first expected value is evaluated as
\begin{align*}
\mathbf{E}_{\Omega_0}\left[\|\zeta(s,\mathbb{X}_\alpha)-\zeta_L(s,\mathbb{X}_\alpha)\|^2\right] 
&= \mathbf{E}\left[\mathbf{1}_{\Omega_0} \cdot \|\zeta(s,\mathbb{X}_\alpha)-\zeta_L(s,\mathbb{X}_\alpha)\|^2\right] \\
&\leq \mathbf{E}\left[\|\zeta(s,\mathbb{X}_\alpha)-\zeta_L(s,\mathbb{X}_\alpha)\|^2\right]. 
\end{align*}
Furthermore, we have
\begin{align*}
\mathbf{E} \left[\|\zeta(s,\mathbb{X}_\alpha)-\zeta_L(s,\mathbb{X}_\alpha)\|^2\right]
&= \sum_{m,n>L} \mathbf{E}[\mathbb{X}_\alpha(m) \overline{\mathbb{X}_\alpha(n)}]
\left\langle (m+\alpha)^{-s}, (n+\alpha)^{-s} \right\rangle \\
&= \sum_{n>L} \|(n+\alpha)^{-s}\|^2 
\end{align*}
since $\mathbf{E}[\mathbb{X}_\alpha(m) \overline{\mathbb{X}_\alpha(n)}]=0$ for $m \neq n$ by Lemma \ref{lem:2.1}. 
Then we see that
\begin{gather}\label{eq:04220320}
\sum_{n>L} \|(n+\alpha)^{-s}\|^2
\leq \sum_{n>L} M (n+\alpha)^{-2\sigma_0}
\leq \frac{M}{2\sigma_0-1} L^{1-2\sigma_0}, 
\end{gather}
where $M= \iint_{U} \,d \sigma dt$ is a positive constant determined only by $K$. 
From the above, we deduce
\begin{gather}\label{eq:04220358}
\mathbf{E}_{\Omega_0}\left[\|\zeta(s,\mathbb{X}_\alpha)-\zeta_L(s,\mathbb{X}_\alpha)\|^2\right] 
\ll L^{1-2\sigma_0}
\end{gather}
with an implied constant depending only on $K$. 
On the other hand, we obtain
\begin{align}\label{eq:04240235}
&\mathbf{E}_{\Omega_0} \left[\|\zeta_L(s,\mathbb{X}_\alpha)-\zeta_N(s,\mathbb{X}_\alpha)\|^2\right] \\
&= \sum_{N<n_1,n_2 \leq L} 
\mathbf{E}_{\Omega_0}[\mathbb{X}_\alpha(n_1) \overline{\mathbb{X}_\alpha(n_2)}]
\left\langle (n_1+\alpha)^{-s}, (n_2+\alpha)^{-s} \right\rangle \nonumber\\
&= \mathbf{P}(\Omega_0) \sum_{N<n \leq L} \|(n+\alpha)^{-s}\|^2 \nonumber\\
&\qquad
+ \sum_{\substack{N<n_1,n_2 \leq L \\ n_1 \neq n_2}}
\mathbf{E}_{\Omega_0}[\mathbb{X}_\alpha(n_1) \overline{\mathbb{X}_\alpha(n_2)}]
\left\langle (n_1+\alpha)^{-s}, (n_2+\alpha)^{-s} \right\rangle. \nonumber
\end{align}
In a similar way that we obtain \eqref{eq:04220320}, the first term is estimated as
\begin{gather}\label{eq:04220356}
\mathbf{P}(\Omega_0) \sum_{N<n \leq L} \|(n+\alpha)^{-s}\|^2
\ll \mathbf{P}(\Omega_0) N^{1-2\sigma_0}, 
\end{gather}
where the implied constant depends only on $K$. 
Let $N<n_1,n_2 \leq L$ with $n_1 \neq n_2$. 
By \eqref{eq:04220048}, we obtain the formula
\begin{align}\label{eq:04220344}
\mathbf{E}_{\Omega_0}[\mathbb{X}_\alpha(n_1) \overline{\mathbb{X}_\alpha(n_2)}] 
&= \mathbf{E}\left[\mathbb{X}_\alpha(n_1) \overline{\mathbb{X}_\alpha(n_2)}
\prod_{n=0}^{N} \mathscr{U}_{s_n,t_n}(\mathbb{X}_\alpha(n), \Delta) \right] \\
&\qquad
+ O\left((\log{\Delta})^{N+1} \sum_{n=0}^{N} \mathbf{E} \left[\mathscr{K}_{s_n,t_n}(\mathbb{X}_\alpha(n), \Delta)\right]\right). \nonumber
\end{align}
Here, the first term on the right-hand side is calculated as
\begin{align}\label{eq:04240214}
&\mathbf{E}\left[\mathbb{X}_\alpha(n_1) \overline{\mathbb{X}_\alpha(n_2)}
\prod_{n=0}^{N} \mathscr{U}_{s_n,t_n}(\mathbb{X}_\alpha(n), \Delta) \right] \\
&= \left(\frac{1}{2\pi}\right)^{N+1} 
\sum_{|m_0| \leq \Delta} \cdots \sum_{|m_N| \leq \Delta} \nonumber\\
&\hspace{30mm} 
\times \prod_{n=0}^{N} \widetilde{U}_{s_n,t_n}(m_n, \Delta)
\mathbf{E}\left[\mathbb{X}_\alpha(n_1) \overline{\mathbb{X}_\alpha(n_2)} \prod_{n=0}^{N} \mathbb{X}_\alpha(n)^{m_n}\right]. \nonumber
\end{align}
Define the set $\mathcal{E}_2$ as 
\begin{gather*}
\mathcal{E}_2
= \bigcup_{\substack{(m_0,\ldots,m_N) \in\, \mathbb{Z}^{N+1} \\ \forall n,~ |m_n| \leq \Delta}}
\bigcup_{\substack{N<n_1,n_2<L \\ n_1 \neq n_2}}
\left\{ \alpha \in \mathcal{A} ~\middle|~ \prod_{n=0}^{N}(n+\alpha)^{m_n}=\frac{n_2+\alpha}{n_1+\alpha} \right\}. 
\end{gather*}
Then $\mathcal{E}_2$ is a finite subset of $\mathcal{A}$ which is determined only by $N$, $L$, and $\Delta$. 
Suppose $\alpha \in \mathcal{A} \setminus \mathcal{E}_2$. 
By the definition of $\mathcal{E}_2$, we see that 
\begin{gather*}
(n_1+\alpha) (n_2+\alpha)^{-1} \prod_{n=0}^{N}(n+\alpha)^{m_n}
= 1
\end{gather*}
never holds for any $|m_n| \leq \Delta$. 
Therefore, Lemma \ref{lem:2.1} yields that all expected values appearing in \eqref{eq:04240214} are equal to $0$. 
Thus we find that
\begin{gather}\label{eq:04220343}
\mathbf{E}\left[\mathbb{X}_\alpha(n_1) \overline{\mathbb{X}_\alpha(n_2)}
\prod_{n=0}^{N} \mathscr{U}_{s_n,t_n}(\mathbb{X}_\alpha(n), \Delta) \right]
= 0. 
\end{gather}
By \eqref{eq:04220343} and \eqref{eq:04220226}, it is deduced from \eqref{eq:04220344} that
\begin{gather*}
\mathbf{E}_{\Omega_0}[\mathbb{X}_\alpha(n_1) \overline{\mathbb{X}_\alpha(n_2)}] 
\ll \frac{N(\log{\Delta})^{N+1}}{\Delta} 
\end{gather*}
for any $N<n_1,n_2 \leq L$ with $n_1 \neq n_2$, where the implied constant is absolute. 
Hence we obtain
\begin{align*}
&\sum_{\substack{N<n_1,n_2 \leq L \\ n_1 \neq n_2}}
\mathbf{E}_{\Omega_0}[\mathbb{X}_\alpha(n_1) \overline{\mathbb{X}_\alpha(n_2)}]
\left\langle (n_1+\alpha)^{-s}, (n_2+\alpha)^{-s} \right\rangle \\
&\ll \frac{N(\log{\Delta})^{N+1}}{\Delta}
\sum_{\substack{N<n_1,n_2 \leq L \\ n_1 \neq n_2}} 
\left|\left\langle (n_1+\alpha)^{-s}, (n_2+\alpha)^{-s} \right\rangle \right|. 
\end{align*}
Furthermore, we have
\begin{align*}
\sum_{\substack{N<n_1,n_2 \leq L \\ n_1 \neq n_2}} 
\left|\left\langle (n_1+\alpha)^{-s}, (n_2+\alpha)^{-s} \right\rangle \right|
&\leq \sum_{N<n_1,n_2 \leq L} 
\left\|(n_1+\alpha)^{-s}\right\| \left\|(n_2+\alpha)^{-s} \right\| \\
&\leq \left(\sum_{N<n \leq L} \|(n+\alpha)^{-s}\|\right)^2
\end{align*}
by the Cauchy--Schwarz inequality. 
Here, the last sum is evaluated as
\begin{gather*}
\sum_{N<n \leq L} \|(n+\alpha)^{-s}\|
\leq \sum_{N<n \leq L} \sqrt{M} (n+\alpha)^{-1/2} 
\leq 2 \sqrt{M}\sqrt{L}, 
\end{gather*}
where we put $M= \iint_{U} \,d \sigma dt$ as before. 
Therefore we obtain
\begin{gather}\label{eq:04220357}
\sum_{\substack{N<n_1,n_2 \leq L \\ n_1 \neq n_2}}
\mathbf{E}_{\Omega_0}[\mathbb{X}_\alpha(n_1) \overline{\mathbb{X}_\alpha(n_2)}]
\left\langle (n_1+\alpha)^{-s}, (n_2+\alpha)^{-s} \right\rangle
\ll \frac{NL(\log{\Delta})^{N+1}}{\Delta},
\end{gather}
where the implied constant depends only on $K$. 
By \eqref{eq:04220356} and \eqref{eq:04220357}, it is deduced from \eqref{eq:04240235} that 
\begin{gather}\label{eq:04220359}
\mathbf{E}_{\Omega_0} \left[\|\zeta_L(s,\mathbb{X}_\alpha)-\zeta_N(s,\mathbb{X}_\alpha)\|^2\right]
\ll \mathbf{P}(\Omega_0) N^{1-2\sigma_0}
+ \frac{NL(\log{\Delta})^{N+1}}{\Delta}. 
\end{gather}
Combining \eqref{eq:04240222}, \eqref{eq:04220358}, and \eqref{eq:04220359}, we derive \eqref{eq:04220228} for any $\alpha \in \mathcal{A} \setminus \mathcal{E}_2$. 
Then we obtain the desired result by letting $\mathcal{E}=\mathcal{E}_1 \cup \mathcal{E}_2$. 
\end{proof}

\begin{remark}\label{rem:5.3}
Let $\delta, N,\alpha,\omega_0$ be in Proposition \ref{prop:5.2}. 
Define the event $\Omega'_0 \subset \Omega$ as 
\begin{align*}
\Omega'_0
&= \Omega'_0(\delta; N,\alpha,\omega_0) \\
&= \left\{\omega \in \Omega ~\middle|~ 
\text{$\mathbb{Y}_\alpha(n)(\omega) \in A(\theta_n-\pi \delta,\theta_n+\pi \delta)$ for any $0 \leq n \leq N$}\right\} \nonumber
\end{align*}
as an analogue of \eqref{eq:04222013}. 
The probability $\mathbf{P}(\Omega'_0)$ is calculated as
\begin{gather*}
\mathbf{P}(\Omega'_0)
= \prod_{n=0}^{N} \mathbf{P} \left(\mathbb{Y}_\alpha(n) \in A(\theta_n-\pi \delta,\theta_n+\pi \delta)\right)
= \delta^{N+1}
\end{gather*}
for any $\alpha \in \mathcal{A}$ since $\mathbb{Y}_\alpha(0), \ldots, \mathbb{Y}_\alpha(N)$ are independent. 
Furthermore, we have
\begin{align*}
\mathbf{E}_{\Omega'_0}\left[\|\zeta(s,\mathbb{Y}_\alpha)-\zeta_N(s,\mathbb{Y}_\alpha)\|^2\right] 
&= \mathbf{E}[\mathbf{1}_{\Omega'_0}] \cdot \mathbf{E}\left[\|\zeta(s,\mathbb{Y}_\alpha)-\zeta_N(s,\mathbb{Y}_\alpha)\|^2\right] \\
&\ll \mathbf{P}(\Omega'_0) N^{1-2\sigma_0}
\end{align*}
for any $\alpha \in \mathcal{A}$. 
Proposition \ref{prop:5.2} means that similar results are valid for all but finitely many $\alpha \in \mathcal{A}$ if we replace the above $\mathbb{Y}_\alpha(n)$ with $\mathbb{X}_\alpha(n)$. 
\end{remark}

\subsection{Proof of Theorem \ref{thm:1.4}}\label{sec:5.3}
In this section, we finally complete the proof of the main result. 
The last lemma required for the proof is the Mergelyan theorem, which is a complex analogue of the Weierstrass approximation theorem. 

\begin{lemma}[Mergelyan theorem]\label{lem:5.4}
Let $K$ be a compact subset of $\mathbb{C}$ with connected complement. 
Let $f$ be a continuous function on $K$ which is analytic in the interior of $K$. 
Then, for every $\epsilon>0$, there exists a polynomial $p(s)$ such that
\begin{gather*}
\sup_{s \in K} \left|f(s)-p(s)\right|
< \epsilon. 
\end{gather*}
\end{lemma}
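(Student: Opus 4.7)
The plan is to invoke the classical Mergelyan approximation theorem, for which complete proofs are available in standard references such as Rudin's \emph{Real and Complex Analysis} (Chapter~20) or Mergelyan's original paper. Since no new ideas are required, my approach is simply to cite the result; for orientation I outline the standard proof strategy in four steps below.

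First, I would extend $f$ to a compactly supported continuous function $\tilde f$ on $\mathbb{C}$ via the Tietze extension theorem, preserving a modulus of continuity $\omega_f$. Second, I would convolve $\tilde f$ with a smooth mollifier at a small scale $\delta>0$ and apply the Cauchy--Green (Pompeiu) formula to represent the smoothed function as an integral of its $\bar\partial$-derivative against the Cauchy kernel $1/(z-s)$; since $f$ is analytic in the interior of $K$, this $\bar\partial$-derivative is essentially concentrated in a $\delta$-neighborhood of $\partial K$, where its size is controlled by $\omega_f(\delta)/\delta$. Third, I would localize this integral through a partition of unity adapted to a covering of $\partial K$ by discs of radius comparable to $\delta$, and on each disc apply Vitushkin's localization operator to replace the Cauchy kernel by a rational function whose poles lie only in $\mathbb{C}\setminus K$; summing the local pieces yields a uniform rational approximation of $f$ on $K$ with error bounded in terms of $\omega_f(\delta)$. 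Fourth, I would exploit the connected complement hypothesis: by Runge's theorem, each pole lying in $\mathbb{C}\setminus K$ can be pushed to $\infty$, and therefore the rational approximant can itself be uniformly approximated on $K$ by a polynomial. Choosing $\delta$ small enough so that the accumulated error is less than $\epsilon$ produces the required polynomial $p$.

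The hard part of the classical argument is the Vitushkin localization step: one needs a quantitative bound showing that the localized Cauchy kernels introduce errors controlled by $\omega_f(\delta)$ with constants uniform over the covering, which relies on delicate capacity estimates (analytic capacity or continuous analytic capacity). For the purposes of this paper, however, Lemma~\ref{lem:5.4} enters only as a black box in the final step of the proof of Theorem~\ref{thm:1.4}, so the discussion reduces to citing Mergelyan's theorem from the standard literature.
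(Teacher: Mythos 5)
Your proposal is correct and matches the paper's treatment: the paper simply cites the classical Mergelyan theorem (Rudin, \emph{Real and Complex Analysis}, Theorem 20.5) as a black box, exactly as you do. The additional sketch of the Tietze--mollification--Vitushkin--Runge argument is accurate but not needed for the paper's purposes.
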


\begin{proof}
See \cite[Theorem 20.5]{Rudin1987} for a proof. 
\end{proof}

\begin{proof}[Proof of Theorem \ref{thm:1.4}]
Let $K,f,\epsilon$ be as in the statement of Theorem \ref{thm:1.4}. 
Then we deduce from Lemma \ref{lem:5.4} that
\begin{gather*}
\sup_{s \in K} \left|f(s)-p(s)\right|
< \frac{\epsilon}{2}
\end{gather*}
with some polynomial $p(s)$. 
Remark that $p(s)$ is obviously an element of $H(D)$. 
For any $\alpha \in \mathcal{A}$, we obtain
\begin{align}\label{eq:04221641}
&\frac{1}{T}\meas\left\{ \tau \in [0,T] ~\middle|~ \sup_{s \in K} |\zeta(s+i \tau, \alpha)-f(s)|<\epsilon \right\} \\
&\geq \frac{1}{T}\meas\left\{ \tau \in [0,T] ~\middle|~ \sup_{s \in K} |\zeta(s+i \tau, \alpha)-p(s)|<\frac{\epsilon}{2} \right\} \nonumber\\
&= P_{\alpha,T}(A), \nonumber
\end{align}
where the subset $A \subset H(D)$ is defined as 
\begin{gather*}
A
= \left\{g \in H(D) ~\middle|~ \sup_{s \in K}|g(s)-p(s)|<\frac{\epsilon}{2} \right\}. 
\end{gather*}
Here, we check that it is an open set of $H(D)$. 
Let $\{g_n\}$ be any sequence of functions in $H(D) \setminus A$ such that $g_n$ converges to $g \in H(D)$ as $n \to\infty$. 
Then $g_n(s)$ converges uniformly on $K$ by the definition of the topology of $H(D)$. 
Thus we obtain
\begin{gather*}
\sup_{s \in K}|g(s)-p(s)|
= \lim_{n \to\infty} \sup_{s \in K}|g_n(s)-p(s)|
\geq \frac{\epsilon}{2}, 
\end{gather*}
which implies $g \notin A$. 
Hence $H(D) \setminus A$ is closed, and equivalently, $A$ is open. 
Then it is deduced from Theorem \ref{thm:3.1} and Lemma \ref{lem:3.7} that
\begin{gather}\label{eq:04221642}
\liminf_{T \to\infty} P_{\alpha,T}(A)
\geq Q_\alpha(A)
= \mathbf{P}\left(\sup_{s \in K} |\zeta(s, \mathbb{X}_\alpha)-p(s)|<\frac{\epsilon}{2}\right). 
\end{gather}
Let $s_0 \in K$, and put $R=\min\{|z-s| \mid z \in \partial U, s \in K\}$. 
Here, $U$ is a fixed domain such that $K \subset U$ and $\overline{U} \subset D$, whose boundary $\partial U$ is a Jordan curve. 
Then Cauchy's integral formula yields that
\begin{align*}
\left|\zeta(s_0, \mathbb{X}_\alpha)(\omega)-p(s_0)\right|
&= \left|\frac{1}{2\pi i} \oint_{|z-s_0|=r} \frac{\zeta(z, \mathbb{X}_\alpha)(\omega)-p(z)}{z-s_0} \,dz\right| \\
&\leq \frac{1}{2\pi} \int_{0}^{2\pi} \left|\zeta(r e^{i \theta}, \mathbb{X}_\alpha)(\omega)-p(r e^{i \theta})\right| \,d \theta 
\end{align*}
for $\omega \in \Omega_\mathrm{u}$ and $0<r<R$, where $\Omega_\mathrm{u}$ is the same as in the proof of Proposition \ref{prop:2.4}. 
Therefore, we obtain
\begin{align*}
\left|\zeta(s_0, \mathbb{X}_\alpha)(\omega)-p(s_0)\right| 
&\leq \frac{1}{\pi R^2} \int_{0}^{R} \left(\int_{0}^{2\pi} \left|\zeta(r e^{i \theta}, \mathbb{X}_\alpha)(\omega)-p(r e^{i \theta})\right| \,d \theta\right) r \,dr \\
&\leq \frac{1}{\pi R^2} \iint_{U} \left|\zeta(s, \mathbb{X}_\alpha)(\omega)-p(s)\right| \,d \sigma dt \\
&\leq \frac{\sqrt{M}}{\pi R^2} \|\zeta(s, \mathbb{X}_\alpha)(\omega)-p(s)\|
\end{align*}
by the Cauchy--Schwarz inequality, where $M= \iint_{U} \,d \sigma dt$. 
Hence we have 
\begin{gather*}
\sup_{s \in K} |\zeta(s, \mathbb{X}_\alpha)(\omega)-p(s)|
\leq C_K \|\zeta(s, \mathbb{X}_\alpha)(\omega)-p(s)\|
\end{gather*}
with a positive constant $C_K$ depending only on $K$. 
It yields the inequality
\begin{gather}\label{eq:04221643}
\mathbf{P}\left(\sup_{s \in K} |\zeta(s, \mathbb{X}_\alpha)-p(s)|<\frac{\epsilon}{2}\right)
\geq \mathbf{P}\left(\|\zeta(s, \mathbb{X}_\alpha)-p(s)\|<\frac{\epsilon}{2C_K}\right). 
\end{gather}
Combining \eqref{eq:04221641}, \eqref{eq:04221642}, and \eqref{eq:04221643}, we derive
\begin{align}\label{eq:04222144}
&\liminf_{T \to\infty} \frac{1}{T}\meas\left\{ \tau \in [0,T] ~\middle|~ \sup_{s \in K} |\zeta(s+i \tau, \alpha)-f(s)|<\epsilon \right\} \\
&\geq \mathbf{P}\left(\|\zeta(s, \mathbb{X}_\alpha)-p(s)\|<\frac{\epsilon}{2C_K}\right) \nonumber
\end{align}
for any $\alpha \in \mathcal{A}$. 
Then, we prove that the last probability is positive if $\alpha \in \mathcal{A}_\rho(c) \setminus \mathcal{E}$, where $\rho$ is as in Corollary \ref{cor:4.9}, and $\mathcal{E}$ is as in Proposition \ref{prop:5.2}. 
As a consequence of Corollary \ref{cor:4.9}, there exists a sample $\omega_0 \in \Omega$ such that
\begin{gather*}
\|\zeta_N(s,\mathbb{Y}_\alpha)(\omega_0)-p(s)\|
< \frac{\epsilon}{8C_K}
\end{gather*}
for any $\alpha \in \mathcal{A}_\rho(c)$, where $N>N_0$ is an integer chosen later. 
Here, we can assume that $\rho$ satisfies $0<\rho \leq \min\{c, 1-c\}/2$. 
For $\alpha \in \mathcal{A}_\rho(c)$, we denote by $\Omega_0$ the event defined as \eqref{eq:04222013}. 
If $\omega \in \Omega_0$, then we have $\left|\mathbb{X}_\alpha(n)(\omega)-\mathbb{Y}_\alpha(n)(\omega_0)\right| \leq 2\pi \delta$ for any $0 \leq n \leq N$. 
It derives
\begin{align*}
\left\|\zeta_N(s,\mathbb{X}_\alpha)(\omega)- \zeta_N(s,\mathbb{Y}_\alpha)(\omega_0)\right\|
&\leq \sum_{n=0}^{N} \left|\mathbb{X}_\alpha(n)(\omega)-\mathbb{Y}_\alpha(n)(\omega_0)\right| \|(n+\alpha)^{-s}\| \\
&\leq \sum_{n=0}^{N} 2\pi \delta \sqrt{M} (n+c/2)^{-1/2} \\
&\leq \delta B_1 \sqrt{N}
\end{align*}
for $\omega \in \Omega_0$, where $B_1=B_1(c,K)$ is a positive constant depending only on $c$ and $K$. 
Assume that $\delta=\delta(N; \epsilon,K,B_1)$ satisfies
\begin{gather*}
0
< \delta
< \min \left\{\frac{1}{B_1 \sqrt{N}} \frac{\epsilon}{8C_K}, \frac{1}{2}\right\}. 
\end{gather*}
We can choose such a $\delta$ depending only on $c,\epsilon,K$, and $N$. 
Then, we see that the norm $\left\|\zeta_N(s,\mathbb{X}_\alpha)(\omega)- p(s)\right\|$ is evaluated as
\begin{align*}
&\left\|\zeta_N(s,\mathbb{X}_\alpha)(\omega)- p(s)\right\| \\
&\leq \left\|\zeta_N(s,\mathbb{X}_\alpha)(\omega)- \zeta_N(s,\mathbb{Y}_\alpha)(\omega_0)\right\|
+ \|\zeta_N(s,\mathbb{Y}_\alpha)(\omega_0)-p(s)\| \\
&< \frac{\epsilon}{4C_K}
\end{align*}
for any $\omega \in \Omega_0$. 
Hence we derive the inequality
\begin{align}\label{eq:04222058}
&\mathbf{P}\left(\|\zeta(s, \mathbb{X}_\alpha)-p(s)\|<\frac{\epsilon}{2C_K}\right) \\
&\geq \mathbf{P}\left(\Omega_0 \cap \left\{\|\zeta(s, \mathbb{X}_\alpha)-\zeta_N(s, \mathbb{X}_\alpha)\|<\frac{\epsilon}{4C_K}\right\}\right) \nonumber\\
&= \mathbf{P}(\Omega_0)
- \mathbf{P}\left(\Omega_0 \cap \left\{\|\zeta(s, \mathbb{X}_\alpha)-\zeta_N(s, \mathbb{X}_\alpha)\| \geq \frac{\epsilon}{4C_K}\right\}\right). \nonumber
\end{align}
By Chebyshev's inequality and Proposition \ref{prop:5.2}, we have 
\begin{align}\label{eq:04222059}
&\mathbf{P}\left(\Omega_0 \cap \left\{\|\zeta(s, \mathbb{X}_\alpha)-\zeta_N(s, \mathbb{X}_\alpha)\| \geq \frac{\epsilon}{4C_K}\right\}\right) \\
&\leq \left(\frac{4C_K}{\epsilon}\right)^2 
\mathbf{E}_{\Omega_0}\left[\|\zeta(s,\mathbb{X}_\alpha)-\zeta_N(s,\mathbb{X}_\alpha)\|^2\right] \nonumber\\
&\leq B_2 \left\{\mathbf{P}(\Omega_0) N^{1-2\sigma_0}
+ L^{1-2\sigma}
+ \frac{NL (\log{\Delta})^{N+1}}{\Delta}\right\} \nonumber
\end{align}
for any $\alpha \in \mathcal{A}_\rho(c) \setminus \mathcal{E}$, where $B_2=B_2(\epsilon,K)$ is a positive constant depending only on $\epsilon$ and $K$. 
Then it is deduced from \eqref{eq:04222058} and \eqref{eq:04222059} that
\begin{align}\label{eq:04222143}
&\mathbf{P}\left(\|\zeta(s, \mathbb{X}_\alpha)-p(s)\|<\frac{\epsilon}{2C_K}\right) \\
&\geq (1-B_2N^{1-2\sigma_0}) \mathbf{P}(\Omega_0)
- B_2 L^{1-2\sigma_0}
- B_2 \frac{NL (\log{\Delta})^{N+1}}{\Delta}. \nonumber
\end{align}
We choose the integer $N=N(\sigma_0,N_0,B_2)>N_0$ so that $B_2 N^{1-2\sigma_0}\leq1/2$ is satisfied. 
Note that $\sigma_0$ depends only on $K$, and that $N_0$ of Corollary \ref{cor:4.9} depends only on $c,\epsilon,f,K$. 
Thus the integer $N$ depends only on $c,\epsilon,f,K$. 
Then we obtain
\begin{gather*}
(1-B_2N^{1-2\sigma_0}) \mathbf{P}(\Omega_0)
\geq \frac{1}{2}\mathbf{P}(\Omega_0)
\geq \frac{1}{2}\delta^{N+1}
- B_3 \frac{N(\log{\Delta})^{N+1}}{\Delta}
\end{gather*}
for any $\alpha \in \mathcal{A}_\rho(c) \setminus \mathcal{E}$ by Proposition \ref{prop:5.2}, where $B_3=B_3(K)$ is a positive constant depending only on $K$. 
Furthermore, we choose $L=L(\delta,\sigma_0,N,B_2)>N$ so that 
\begin{gather*}
B_2 L^{1-2\sigma_0}
< \frac{1}{4} \delta^{N+1} 
\end{gather*}
is satisfied. 
Lastly, we choose a real number $\Delta=\Delta(\delta,N,L,B_2,B_3)>\Delta_0$ so that
\begin{gather*}
B_2 \frac{NL (\log{\Delta})^{N+1}}{\Delta}
+ B_3 \frac{N(\log{\Delta})^{N+1}}{\Delta}
< \frac{1}{4} \delta^{N+1} 
\end{gather*}
is satisfied. 
As a result, we derive by \eqref{eq:04222143} that 
\begin{gather}\label{eq:04222145}
\mathbf{P}\left(\|\zeta(s, \mathbb{X}_\alpha)-p(s)\|<\frac{\epsilon}{2C_K}\right)
> 0
\end{gather}
for any $\alpha \in \mathcal{A}_\rho(c) \setminus \mathcal{E}$. 
From the above setting, $N$, $L$, and $\Delta$ depend only on $c,\epsilon,f,K$, and therefore, the finite set $\mathcal{E}$ is determined only by $c,\epsilon,f,K$. 
By \eqref{eq:04222144} and \eqref{eq:04222145}, we obtain the desired result. 
\end{proof}

\section{Consequences of the main result}\label{sec:6}

Let $f \in H(D)$ and $1/2<\sigma_0<1$. 
By Cauchy's integral formula, we have 
\begin{gather}\label{eq:04090100}
f^{(n)}(\sigma_0)
= \frac{n!}{2\pi i} \oint_{|s-\sigma_0|=r} \frac{f(s)}{(s-\sigma_0)^{n+1}} \,ds 
\end{gather}
for $n \geq0$, where $r$ satisfies $0<r<\min\{\sigma_0-1/2, 1-\sigma_0\}$. 
Applying this formula, we prove that Theorem \ref{thm:1.4} implies Theorem \ref{thm:1.5}. 

\begin{proof}[Proof of Theorem \ref{thm:1.5}]
Denote by $K$ the disc $K= \{s \in \mathbb{C} \mid |s-\sigma_0|\leq r \}$, where $r$ is taken as
$r=\min\{\sigma_0-1/2, 1-\sigma_0\}/2$. 
Then $K$ is a compact subset of the strip $D$ with connected complement. 
Define the function $f$ as 
\begin{gather*}
f(s)
= \sum_{n=0}^{N} \frac{z_n}{n!} (s-\sigma_0)^n
\end{gather*}
by using $\underline{z}=(z_0, \ldots, z_N) \in \mathbb{C}^{N+1}$. 
Note that $f^{(n)}(\sigma_0)=z_n$ by definition. 
Therefore, we deduce from \eqref{eq:04090100} that 
\begin{align*}
|\zeta^{(n)}(\sigma_0+i \tau, \alpha)-z_n|
&= \left| \frac{n!}{2\pi i} \oint_{|s-\sigma_0|=r} \frac{\zeta(s+i \tau, \alpha)-f(s)}{(s-\sigma_0)^{n+1}} \,ds \right| \\
&\leq \frac{n!}{r^n} \sup_{s \in K} |\zeta(s+i \tau, \alpha)-f(s)|. 
\end{align*}
For every $\epsilon>0$, we put
\begin{gather*}
\epsilon'
= \epsilon \cdot \left(1+\frac{1}{r}+\cdots+\frac{N!}{r^N}\right)^{-1}. 
\end{gather*}
Then we see that $|\zeta^{(n)}(\sigma_0+i \tau, \alpha)-z_n|<\epsilon$ is satisfied for any $0 \leq n \leq N$ if we suppose $\sup_{s \in K} |\zeta(s+i \tau, \alpha)-f(s)|<\epsilon'$. 
Since the function $f$ is continuous on $K$ and analytic in the interior of $K$, we can apply Theorem \ref{thm:1.4}. 
Hence, there exist a positive real number $\rho$ and a finite subset $\mathcal{E} \subset \mathcal{A}_\rho(c)$ depending on $c,\epsilon',f,K$ such that 
\begin{align*}
&\liminf_{T \to\infty} \frac{1}{T} \meas
\left\{ \tau \in [0,T] ~\middle|~ \max_{0 \leq n \leq N} |\zeta^{(n)}(\sigma_0+i \tau, \alpha)-z_n|<\epsilon \right\} \\
&\geq \liminf_{T \to\infty} \frac{1}{T} \meas
\left\{ \tau \in [0,T] ~\middle|~ \sup_{s \in K} |\zeta(s+i \tau, \alpha)-f(s)|<\epsilon' \right\} 
> 0
\end{align*}
for any $\alpha \in \mathcal{A}_\rho(c) \setminus \mathcal{E}$. 
Recall that $\epsilon', f, K$ are determined only by $\epsilon, \sigma_0, \underline{z}$. 
Thus $\rho$ and $\mathcal{E}$ depend on $c,\epsilon,\sigma_0, \underline{z}$, and we obtain the conclusion. 
\end{proof}

For the proof of Theorem \ref{thm:1.6}, we apply Rouche's theorem \cite[Theorem 10.43]{Rudin1987}. 
Let $f,g \in H(D)$ and $1/2<\sigma_0<1$. 
Suppose that the inequality
\begin{gather}\label{eq:04090148}
\sup_{|s-\sigma_0|=r} |g(s)-f(s)|
< \inf_{|s-\sigma_0|=r} |f(s)|
\end{gather}
holds, where $r$ satisfies $0<r<\min\{\sigma_0-1/2, 1-\sigma_0\}$. 
Then $g$ has the same number of zeros as that of $f$ in the region $|s-\sigma_0|<r$. 

\begin{proof}[Proof of Theorem \ref{thm:1.6}]
For $1/2<\sigma_1<\sigma_2<1$, we take the real numbers $\sigma_0$ and $r$ as $\sigma_0=(\sigma_1+\sigma_2)/2$ and $r=(\sigma_2-\sigma_1)/4$. 
Then $K= \{s \in \mathbb{C} \mid |s-\sigma_0|\leq r \}$ is a compact subset of the strip $D$ with connected complement. 
Furthermore, we define the function $f$ as 
\begin{gather*}
f(s)
= s-\sigma_0. 
\end{gather*}
Obviously, it is continuous on $K$ and analytic in the interior of $K$. 
Therefore we can apply Theorem \ref{thm:1.4}. 
Then there exist a positive real number $\rho$ and a finite subset $\mathcal{E} \subset \mathcal{A}_\rho(c)$ such that 
\begin{gather}\label{eq:04090141}
\liminf_{T \to\infty} \frac{1}{T} \meas
\left\{ \tau \in [0,T] ~\middle|~ \sup_{s \in K} |\zeta(s+i \tau, \alpha)-f(s)|<r \right\} 
> 0
\end{gather}
for any $\alpha \in \mathcal{A}_\rho(c) \setminus \mathcal{E}$. 
Define $\mathfrak{S}(\alpha)$ as the set of all $\tau \in \mathbb{R}_{\geq0}$ such that the inequality $\sup_{s \in K} |\zeta(s+i \tau, \alpha)-f(s)|<r$ is satisfied. 
Then, for any $\alpha \in \mathcal{A}_\rho(c) \setminus \mathcal{E}$, there exists a sequence $\{\tau_n\}$ of elements in $\mathfrak{S}(\alpha)$ such that 
\begin{gather*}
\tau_{n+1}
\geq \tau_n+2r
\quad\text{and}\quad
\mathfrak{S}(\alpha)
\subset \bigcup_{n=1}^{\infty} [\tau_n-r, \tau_n+r] 
\end{gather*}
by \eqref{eq:04090141}. 
Furthermore, we see that
\begin{gather*}
\sup_{|s-\sigma_0|=r} |\zeta(s+i \tau_n, \alpha)-f(s)|
< r
= \inf_{|s-\sigma_0|=r} |f(s)|
\end{gather*}
for such $\tau_n$. 
Hence \eqref{eq:04090148} holds with $g(s)=\zeta(s+i \tau_n, \alpha)$. 
By Rouche's theorem, the function $\zeta(s+i \tau_n, \alpha)$ has exactly one zero in $|s-\sigma_0|<r$, that is, $\zeta(s, \alpha)$ has exactly one zero in the region 
\begin{gather*}
U_n
= \{s \in \mathbb{C} \mid |s-(\sigma_0+i \tau_n)|<r \} 
\end{gather*}
for any $n \geq1$. 
Note that the regions $U_n$ are distinct by $\tau_{n+1}\geq \tau_n+2r$. 
As a result, we obtain
\begin{gather}\label{eq:04090417}
N_\alpha(\sigma_1,\sigma_2,T)
\geq n(T)
:=\max \left\{n ~\middle|~ \tau_n+r \leq T \right\} 
\end{gather}
since $\bigcup_{n=1}^{n(T)} U_n$ is included in the rectangle $\sigma_1 \leq \sigma \leq \sigma_2$, $0 \leq t \leq T$. 
On the other hand, the inequality
\begin{gather}\label{eq:04090418}
\meas (\mathfrak{S}(\alpha) \cap [0,T])
\leq \meas \left(\bigcup_{n=1}^{n(T)+1} [\tau_n-r, \tau_n+r]\right)
= 2r \left(n(T)+1\right) 
\end{gather}
holds since $\mathfrak{S}(\alpha) \cap [0,T]$ is covered by $\bigcup_{n=1}^{n(T)+1} [\tau_n-r, \tau_n+r]$. 
Combining \eqref{eq:04090141}, \eqref{eq:04090417}, and \eqref{eq:04090418}, we have
\begin{gather*}
N_\alpha(\sigma_1,\sigma_2,T)
\gg \meas (\mathfrak{S}(\alpha) \cap [0,T]) 
\gg T
\end{gather*}
as $T \to\infty$ for any $\alpha \in \mathcal{A}_\rho(c) \setminus \mathcal{E}$. 
Here, we recall that $\rho$ and $\mathcal{E}$ depend on $c,r,f,K$. 
Since $r,f,K$ are determined only by $\sigma_1$ and $\sigma_2$, they depend on $c,\sigma_1,\sigma_2$. 
Therefore the proof is completed.  
\end{proof}

%\bibliographystyle{amsplain}
%\bibliography{refs}

\providecommand{\bysame}{\leavevmode\hbox to3em{\hrulefill}\thinspace}
\providecommand{\MR}{\relax\ifhmode\unskip\space\fi MR }
% \MRhref is called by the amsart/book/proc definition of \MR.
\providecommand{\MRhref}[2]{%
  \href{http://www.ams.org/mathscinet-getitem?mr=#1}{#2}
}
\providecommand{\href}[2]{#2}

\end{document}